\documentclass[11pt,letterpaper]{amsart}

  \usepackage{fullpage,adjustbox}
\usepackage[usenames,dvipsnames]{xcolor}
\usepackage{hyperref, tikz-cd}
\hypersetup{colorlinks=true,linkcolor={Brown},citecolor={Brown},urlcolor={Brown}}

\usepackage{microtype}
\emergencystretch=1.5em

\usepackage{amssymb,amsmath,amsfonts,amsthm,bm,latexsym,amscd,verbatim,url,nicefrac,stmaryrd,enumerate,colonequals,dsfont,appendix,stackrel,mathrsfs,cleveref}

\crefname{exm}{Example}{Examples}
\crefname{cor}{Corollary}{Corollaries}
\crefname{prop}{Proposition}{Propositions}
\crefname{rmk}{Remark}{Remarks}
\crefname{lem}{Lemma}{Lemmata}

\usepackage[color,matrix,arrow]{xy}
\usepackage{float}
\usepackage{times}
\usepackage{graphicx}

\usepackage{array}

\makeatletter
\newcommand{\oset}[3][0ex]{
	\mathrel{\mathop{#3}\limits^{
			\vbox to#1{\kern-2\ex@
				\hbox{$\scriptstyle#2$}\vss}}}}
\makeatother

\RequirePackage{xcolor}
\providecommand{\alberto}[1]{#1}

\numberwithin{equation}{section}

\newtheorem*{thm*}{Theorem}
\newtheorem{prop}[equation]{Proposition}
\newtheorem*{prop*}{Proposition}

\newtheorem{cor}[equation]{Corollary}
\newtheorem{conj}[equation]{Conjecture}
\theoremstyle{definition}
\newtheorem{dfn}[equation]{Definition}
\newtheorem{notn}[equation]{Notation}
\newtheorem{rmk}[equation]{Remark}

\newtheorem{exm}[equation]{Example}

\usepackage{bbold,etoolbox}
\newcommand{\one}{\mathbb{1}}

\newcommand{\A}{\mathbb{A}}
\newcommand{\B}{\mathbb{B}}

\newcommand{\F}{\mathbb{F}}

\newcommand{\G}{\mathbb{G}}

\renewcommand{\P}{\mathbb{P}}

\newcommand{\Q}{\mathbb{Q}}

\newcommand{\T}{\mathbb{T}}
\newcommand{\Z}{\mathbb{Z}}

\newcommand{\mcC}{\mathcal{C}}
\newcommand{\mcF}{\mathcal{F}}
\newcommand{\mcE}{\mathcal{E}}

\newcommand{\mcD}{\mathcal{D}}

\newcommand{\mcO}{\mathcal{O}}
\newcommand{\mcP}{\mathcal{P}}

\newcommand{\mcS}{\mathcal{S}}
\newcommand{\mcT}{\mathcal{T}}
\newcommand{\mcU}{\mathcal{U}}
\newcommand{\mcV}{\mathcal{V}}
\newcommand{\mcW}{\mathcal{W}}
\newcommand{\mcX}{\mathcal{X}}
\newcommand{\mcY}{\mathcal{Y}}

\newcommand{\mfT}{\mathfrak{T}}
\newcommand{\mfm}{\mathfrak{m}}

\newcommand{\mfP}{\mathfrak{P}}
\newcommand{\mfQ}{\mathfrak{Q}}

\newcommand{\mfS}{\mathfrak{S}}
\newcommand{\mfU}{\mathfrak{U}}

\newcommand{\mfX}{\mathfrak{X}}
\newcommand{\mfZ}{\mathfrak{Z}}
\newcommand{\mfY}{\mathfrak{Y}}

\newcommand{\xto}[1]{\xrightarrow{#1}}

\DeclareMathOperator{\an}{an}
\DeclareMathOperator{\Aff}{Aff}

\DeclareMathOperator{\CAlg}{CAlg}

\DeclareMathOperator{\ct}{ct}
\DeclareMathOperator{\conv}{conv}
\DeclareMathOperator{\dR}{dR}

\DeclareMathOperator{\dual}{dual}
\DeclareMathOperator{\eff}{eff}

\DeclareMathOperator{\et}{\acute{e}t}

\DeclareMathOperator{\FDA}{{FDA}}

\DeclareMathOperator{\Hom}{Hom}

\DeclareMathOperator{\id}{id}

\DeclareMathOperator{\Int}{Int}

\DeclareMathOperator{\map}{map}

\DeclareMathOperator{\MIC}{MIC}

\DeclareMathOperator{\op}{{op}}

\DeclareMathOperator{\Perf}{Perf}

\newcommand{\Prl}{{\rm{Pr}^{L}}}

\newcommand{\Prlo}{{\rm{Pr}^{L}_\omega}}

\newcommand{\Prloo}{{\rm{CAlg}(\Prlo)}}

\newcommand{\Prr}{\rm{Pr}^{R}}

\DeclareMathOperator{\qcqs}{qcqs}
\DeclareMathOperator{\red}{red}
\DeclareMathOperator{\rig}{rig}

\DeclareMathOperator{\Sing}{Sing}
\DeclareMathOperator{\Sm}{Sm}

\DeclareMathOperator{\Spa}{Spa}
\DeclareMathOperator{\Spec}{Spec}

\DeclareMathOperator{\Spf}{Spf}

\DeclareMathOperator{\Tot}{Tot}

\DeclareMathOperator{\uhom}{\underline{Hom}}

\DeclareMathOperator{\Ad}{{Ad}}

\DeclareMathOperator{\Cat}{{Cat}}

\DeclareMathOperator{\Ch}{{Ch}}

\DeclareMathOperator{\DA}{{{DA}}}

\DeclareMathOperator{\FSch}{{FSch}}

\DeclareMathOperator{\Isoc}{{Isoc}}

\DeclareMathOperator{\MW}{\mathcal{MW}}

\newcommand{\QCoh}{\mathcal{D}}

\DeclareMathOperator{\RigDA}{{RigDA}}

\DeclareMathOperator{\Psh}{{Psh}}

\DeclareMathOperator{\Sh}{{Sh}}
\DeclareMathOperator{\SH}{{SH}}

\DeclareMathOperator{\Vect}{{Vect}}

\usepackage{xspace}
\newcommand{\icat}{$\infty$\nobreakdash-category\xspace}
\newcommand{\icats}{$\infty$\nobreakdash-categories\xspace}

\usepackage{array}

\begin{document}

	\title{Berthelot's conjecture via homotopy theory}
	\author{Veronika Ertl}
	\address{{Laboratoire de mathématiques Nicolas Oresme - Universit\'e de Caen Normandie (France)}}
	\email{veronika.ertl@unicaen.fr}

	\author{Alberto Vezzani}
	\address{Dipartimento di Matematica ``F. Enriques'' - Universit\`a degli Studi di Milano (Italy)}
	\email{alberto.vezzani@unimi.it}

	\thanks{
		}
	
\begin{abstract}
We use motivic methods to give a quick proof of Berthelot's conjecture stating that the push-forward map in rigid cohomology of the structural  sheaf along a smooth and proper map has a canonical structure of overconvergent $F$-isocrystal on the base. 
\end{abstract}

\maketitle
\setcounter{tocdepth}{1}
	\tableofcontents

\section{Introduction}

The first candidate for a $p$-adic cohomology theory in characteristic $p$  was defined by Berthelot \cite{berth-cris}  (after a suggestion due to Grothendieck) in the form of crystalline cohomology, and later generalized \cite{berth-rig} in the form of rigid cohomology. 
Even though the ``absolute'' version of this cohomology theory satisfies the axioms of a Weil cohomology,  
relative versions  are traditionally  harder to control.

This has to do with the difficulty of introducing a reasonable category of coefficients which is stable under higher push-forward functors. 
Let $f\colon X\rightarrow S$ be a proper smooth morphism of algebraic varieties over a field of characteristic zero. 
In case $S$ is smooth, 
$R^nf_\ast\Omega_{X/S}$ comes equipped with a natural connection over $\mathcal{O}_S$, the so-called Gauß--Manin connection. 
Moreover, 
the functors $R^nf_\ast$ send modules with integrable connections over $X$ to modules with integrable connections over $S$, 
thus allowing the definition of relative (algebraic) de Rham cohomology \cite{katzMIC}. 

Over a field of characteristic $p>0$, a similar result holds for $\ell$-adic étale cohomology for $\ell\neq p$. 
The smooth-proper base change theorem implies that the higher push-forward functors of a smooth proper morphism $f:X\rightarrow S$ on the étale site send $\ell$-adic lisse sheaves over $X$ to $\ell$-adic lisse sheaves over $S$ \cite{sga4half}. 

In the $p$-adic setting, the expectation is that the so-called \emph{overconvergent $F$-isocrystals} provide  a well-behaved theory of coefficients: Berthelot in \cite{berth-rig} defined for a morphism $f:X\rightarrow S$ of $k$-varieties rigid higher push-forward functors $R^if_{\rig\ast}$ in terms of the de Rham cohomology of rigid spaces. He then conjectured \cite[(4.3)]{berth-rig} that if $f$ is smooth and proper these functors take indeed overconvergent $F$-isocrystals to overconvergent $F$-isocrystals. While he could show this in a special case, this conjecture is still open to this day. 

Admittedly, there is a ``coarser'' theory of \emph{convergent} isocrystals developed by Ogus \cite{ogus-ct} which  enjoys some stability properties under push-forward maps (see also \cite[Appendix]{morrow}), as well as a theory of coefficients (arithmetic $D$-modules) developed by Daniel Caro \cite{caro1,caro2,caro3} which enjoys a six-functor formalism and stability under push-forward functors  (for a quick survey, see \cite{lazda-conj}). The latter has been compared to overconvergent isocrystals by Lazda \cite{lazda-RH}. Nonetheless, 
one cannot deduce results on Berthelot's conjecture starting from such alternative approaches in the most general case.

\subsection{The conjecture}\label{subsec: intro/conj}

Let us now describe Berthelot's conjecture in more precise terms. 
Fix a perfect field $k$ of positive characteristic $p$, let $W(k)$ be its ring of Witt vectors and $K=W(k)[\frac{1}{p}]$. 
Consider a smooth and proper morphism $f:X\rightarrow S$ 
of algebraic varieties over $k$. 
Assume that there is an open immersion $S\hookrightarrow \overline{S}$ over $k$ 
together with a closed immersion $\overline{S}\hookrightarrow\mfP$ 
into a formal scheme over $W(k)$. 
The triple $(S,\overline{S},\mfP)$ is called a $W(k)$-\emph{frame} of $S$. We say that this frame is proper resp. smooth if $\overline{S}$ is proper resp. if $\mfP$ is smooth in a neighbourhood of $S$. 
Assume first that $f$ extends to a morphism of proper frames 
$f:(X,\overline{X},\mfQ)\rightarrow (S,\overline{S},\mfP)$ { smooth around $X$}. 
Then Berthelot defines the higher direct images in terms of the de Rham cohomology of rigid spaces \cite[Remark (2.5)c)]{berth-rig} as
$$
R^qf_{\rig\ast}(X/(S,\overline{S},\mfP)):=R^qf_\ast j^\dagger_X\Omega^\bullet_{]\overline{X}[_{\mfQ}/ ]\overline{S}[_{\mfP}},
$$
where $]\overline{X}[_{\mfQ}$ and $]\overline{S}[_{\mfP}$ are the tubes \cite[Defenition 1.3]{berth-rig} of $\overline{X}$ inside $\mfQ$ and of $\overline{S}$ inside $\mfP$, respectively, 
and $j^\dagger$ denotes the overconvergent sections  (along $]\overline{X}[_{\mfQ}\backslash ]X[_{\mfQ}$). 
This definition only depends on $f:X\rightarrow (S,\overline{S},\mfP)$ and not on the choice of a frame $(X,\overline{X},\mfQ)$, 
which makes it possible to generalise it to the case where $f$ does not neessarily extend to a morphism of proper frames \cite{chiar-tsu}. 
Moreover, one can similarly define $R^qf_{\rig\ast}(X/(T,\overline{T},\mfP))\colonequals R^qf_{\rig\ast}(X_T/(T,\overline{T},\mfP))$ with respect to any proper frame $(T,\overline{T},\mfP)$ with a map $T\to S$. 
Note that the $j^\dagger\mcO_{]\overline{T}[_{\mfP}}$-module $R^qf_{\rig\ast}(X/(T,\overline{T},\mfP))$ is equipped with a canonical integrable connection, the Gauß--Manin-connection, 
and with a canonical Frobenius if $\mfP$ admits a lift of Frobenius. 

Berthelot conjectured in \cite[(4.3)]{berth-rig} that the collection of the groups $R^qf_{\rig\ast}(X/(T,\overline{T},\mfP))$ (as  $(T,\overline{T},\mfP)$ ranges over smooth proper frames over  $S$)  can be given the structure of an \emph{overconvergent $F$-isocrystal over $S$} i.e., that each one of them is a $\varphi$-equivariant coherent module over $]T[_{\mfP}^\dagger$  and that for any morphism  $u\colon (T',\overline{T}',\mfP')\to (T,\overline{T},\mfP)$ of smooth proper frames over $S$, there are canonical isomorphisms $u^*(R^qf_{\rig*}(X/(T,\overline{T},\mfP)))\stackrel{\sim}{\to}R^qf_{\rig*}(X/(T',\overline{T}',\mfP'))$ satisfying the usual cocycle condition. Such objects form a closed monoidal abelian category $F$-$\Isoc^\dagger(S/K)$ \cite[Section 8.3]{lestumbook} and compare to a suitable category of $\varphi$-modules with overconvergent integrable connections.

To our knowledge, the only  case in which the conjecture as stated above is known to hold is the \emph{liftable} case, i.e. the case in which $X\to S$ has a proper lift $\mfQ\to\mfP$, which is smooth around $X$ and the base frame is smooth (see  \cite[Th\'eor\`eme 5]{berth-rig} and  \cite[\S 4.1]{tsuzuki_coherence}). Weaker versions and variants of the conjecture have been proven by  Tsuzuki \cite[\S 4.2]{tsuzuki_coherence}, Shiho \cite{shiho_relative3}, Ambrosi \cite{ambrosi_ns}, {di\,Proietto}--Tonini--Zhang \cite{dPTZ_crystallineBerthelotConj}. 
Stronger versions of the conjecture concern push-forwards of arbitrary overconvergent $F$-isocrystals (and not just the trivial one $\one$). 
For a complete panorama on the subject, see \cite{lazda-conj}.

\subsection{The main result}

In this article we propose a motivic approach to prove the original version of Berthelot's conjecture (compare \cite[Conjecture 5.3]{shiho_relative1}) by proving the following (see \Cref{conj!}).

\begin{thm*}
    Let $f:X\rightarrow S$ be a smooth and proper morphism 
    of algebraic varieties over $k$. 
    {By letting $(T,\overline{T},\mfP)$ vary among proper frames over $S$,} the modules 
    $$ R^qf_{\rig\ast}(X/(T,\overline{T},\mfP))$$ 
    arise from an  overconvergent $F$-isocrystal over $S$. Moreover, for any $(T,\overline{T},\mfP)$ as above, they are vector bundles on $]T[^\dagger_{\mfP}$.
\end{thm*}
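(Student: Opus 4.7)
The plan is to route Berthelot's statement through the motivic stable homotopy category of rigid analytic varieties and to transport the result via a de Rham realization. For each proper frame $(T,\overline{T},\mfP)$ over $S$ one attaches the overconvergent tube $]T[^\dagger_{\mfP}$ and considers the rigid motivic category $\RigDA$ attached to its dagger structure. Pulling back $f:X\to S$ along $T\to S$ and passing to generic fibres yields a smooth and proper morphism $\bar f$ of rigid analytic spaces over $]T[^\dagger_{\mfP}$; the object of interest is the motivic pushforward $R\bar f_{\ast}\one \in \RigDA(]T[^\dagger_{\mfP})$, which we aim to realize as a perfect $j^\dagger\mcO_{]\overline{T}[_{\mfP}}$-module complex with integrable overconvergent connection and Frobenius, whose cohomology sheaves recover Berthelot's $R^qf_{\rig\ast}(X/(T,\overline{T},\mfP))$.

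The proof then proceeds in three motivic moves. First, $R\bar f_{\ast}\one$ is $\otimes$-dualizable in $\RigDA(]T[^\dagger_{\mfP})$: this is a formal consequence of the six-functor formalism for rigid motives (Ayoub--Gallauer--Vezzani), since $R\bar f_{\ast}\simeq R\bar f_{!}$ for $\bar f$ proper, and $R\bar f_{!}\one$ is dualizable for $\bar f$ smooth via relative purity. Second, one applies a symmetric monoidal, Frobenius-equivariant overconvergent de Rham realization
$\RigDA(]T[^\dagger_{\mfP}) \longrightarrow \mcD(j^\dagger\mcO_{]\overline{T}[_{\mfP}}\text{-}\MIC)$
which sends $\one$ to the structure sheaf with trivial connection and which, again by construction on smooth generators, identifies the image of $R\bar f_{\ast}\one$ with the overconvergent relative de Rham complex $j^\dagger\Omega^\bullet_{]\overline X[_{\mfQ}/]\overline T[_{\mfP}}$. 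Because the realization is monoidal, it preserves dualizability, so the image is a perfect, dualizable complex of $j^\dagger\mcO\text{-}\MIC$; its cohomology is then automatically a finite family of vector bundles equipped with overconvergent integrable connection. Third, the compatibility as $(T,\overline{T},\mfP)$ varies reduces to smooth-proper base change in $\RigDA$ transported through the realization, while the Frobenius action is built into the motivic category from the outset. Together these upgrade the family $\{R^qf_{\rig\ast}(X/(T,\overline{T},\mfP))\}$ to an overconvergent $F$-isocrystal on $S$.

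The central obstacle is the de Rham realization itself: one needs a symmetric monoidal, Frobenius-equivariant functor that genuinely lands in $j^\dagger\mcO$-modules (not merely in $\mcO$-modules of the tube) and that commutes with $R\bar f_{\ast}$ for smooth proper $\bar f$. Overconvergence is handled by presenting $\RigDA(]T[^\dagger_{\mfP})$ as the appropriate (co)limit of the motivic categories of strict neighbourhoods of the tube, so that a universal de Rham realization passes to the overconvergent limit and produces $j^\dagger$-coefficients. Compatibility with $R\bar f_{\ast}$ reduces by continuity to a check on smooth affinoid generators, where it is essentially the rigid analytic Poincaré lemma together with Berthelot's explicit formula for $R^qf_{\rig\ast}$ recalled in \S\ref{subsec: intro/conj}. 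Once both ingredients are in place, the conjecture follows by purely formal manipulations in the motivic world, bypassing the subtle convergence issues that have historically obstructed direct proofs.
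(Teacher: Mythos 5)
Your overall strategy (dualizability of the pushforward of $\one$, a monoidal de Rham realization, transport of structure) is the right one and matches the paper in spirit, but your first step contains the gap that is precisely the crux of Berthelot's conjecture. You write that pulling back $f\colon X\to S$ along $T\to S$ and ``passing to generic fibres'' yields a smooth and proper morphism $\bar f$ of rigid analytic spaces over $]T[^\dagger_{\mfP}$, and you then work with $R\bar f_*\one\in\RigDA(]T[^\dagger_{\mfP})$. No such $\bar f$ exists in general: $X_T\to T$ is a morphism in characteristic $p$, and it need not lift to a morphism of (formal or rigid) spaces over $\mfP$, nor need a frame $(X_T,\overline{X},\mfQ)\to(T,\overline{T},\mfP)$ exist globally. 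What does lift canonically is the \emph{motive}: the paper starts from $f_*\one\in\DA(S)_{\dual}$ in the algebraic motivic category over the characteristic-$p$ base and transports it to $\lim_{\Ad^\dagger_{/S}}\RigDA(\mfT^\dagger_{\Q_p})$ via the Monsky--Washnitzer realization (\Cref{prop:MW}), which exists because \'etale-local lifts are unique up to homotopy and motives satisfy \'etale descent. Your argument never supplies this lifting mechanism; it assumes the lift. Relatedly, anchoring the construction at a single tube makes the Frobenius structure problematic, since $]T[^\dagger_{\mfP}$ carries no Frobenius unless $\mfP$ lifts one; the paper instead lets $\varphi$ act on the index category of frames (\Cref{rmk:Frobs}) and uses insensitivity of $\DA$ to universal homeomorphisms.

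A second, independent gap: you claim that because the realization of a dualizable motive is a perfect complex, ``its cohomology is then automatically a finite family of vector bundles.'' Perfectness only gives coherent cohomology; local freeness is a genuinely extra statement. Over a \emph{smooth} frame one could try to adapt Katz's argument for coherent modules with integrable connection, but the theorem makes no smoothness assumption on the frame, and the paper obtains local freeness from the spreading-out property of rigid motives (\cite[Theorem 4.46]{LBV}, via \Cref{cor:RGammadagger}), reducing the dagger case to the completed case by faithful flatness of $\mcO(\mcS^\dagger)\to\mcO(\widehat{\mcS})$. Finally, note that the comparison with Berthelot's $R^qf_{\rig*}$ is not purely a check on generators: the functor from solid modules to $j^\dagger\mcO$-module sheaves need not be exact (\Cref{rmk:warning}), so identifying cohomology groups on both sides uses the local splitting of the complex, which itself relies on the vector-bundle property you treated as automatic.
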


In particular, we prove Conjecture BF of \cite{lazda-conj} and even Berthelot's original conjecture from \cite{berth-cris} which predicted that the  modules in the statement are vector bundles (and not simply coherent) at least over a smooth frame\footnote{In Berthelot's language, we indeed prove that they form an ``overconvergent F-crystal''. The assumption that the frame is smooth is superflous.}. 

The way we prove the theorem above is by first defining a realization functor from \emph{all} varieties over $S$, taking values in some derived solid version of the category of overconvergent $F$-isocrystals, where no conditions on the modules are imposed (that is, they may not be vector bundles). To this aim, we make substantial use of the techniques developed in \cite{LBV} and prove (cfr.\ \Cref{cor:Frobenius_structures} and  \Cref{prop:BvsCS}):

\begin{prop*}
    There is a functor
    $$
    \dR_{\rig}^\varphi\colon (\Sm/S)^{\op}\to \left(\lim_{(T,\overline{T},\mfP)\to S}\QCoh(]T[_{\mfP}^\dagger )\right)^\varphi
    $$
    such that for any smooth and proper morphism $f\colon X\to S$, the realization at each $(T,\overline{T},\mfP)$ agrees with Berthelot's complex $Rf_{\rig*}(X/(T,\overline{T},\mfP))$. When restricted to qcqs maps, it is monoidal and compatible with pullbacks.  
\end{prop*}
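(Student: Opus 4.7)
The plan is to assemble the functor one frame at a time and then glue, using the machinery of overconvergent de Rham realizations of rigid motives from \cite{LBV}. Fix a proper frame $(T,\overline{T},\mfP)$ over $S$. The first task is to construct, functorially in a smooth $S$-variety $Y$, an object of $\QCoh(]T[_{\mfP}^\dagger)$. The idea is to base change to $Y_T \colonequals Y\times_S T$ and then form the relative overconvergent rigid de Rham complex along the tube $]T[_{\mfP}^\dagger$. The universal way to do this is via a composite
\[
(\Sm/S)^{\op}\xrightarrow{Y\mapsto Y_T}(\Sm/T)^{\op}\longrightarrow \RigDA(]T[_{\mfP}^\dagger)^{\op}\xrightarrow{\dR^\dagger}\QCoh(]T[_{\mfP}^\dagger),
\]
in which the middle arrow is the ``dagger generic fibre'' functor from algebraic $T$-varieties into dagger rigid motives over the tube, and the last arrow is the overconvergent de Rham realization constructed in \cite{LBV}. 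Both constituents are symmetric monoidal.

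Second, I would check frame functoriality: for a morphism $u\colon(T',\overline{T}',\mfP')\to(T,\overline{T},\mfP)$ of proper frames over $S$, the pullback along $u$ on the tube side commutes with the dagger generic fibre functor and with the de Rham realization, since both pieces are set up in \cite{LBV} as natural transformations of six-functor formalisms. This provides canonical base-change equivalences $u^*\dR^\dagger(Y/(T,\overline{T},\mfP))\xrightarrow{\sim}\dR^\dagger(Y/(T',\overline{T}',\mfP'))$ satisfying the cocycle identity, which is exactly a factorisation through $\lim_{(T,\overline{T},\mfP)\to S}\QCoh(]T[_{\mfP}^\dagger)$. The Frobenius enhancement is obtained by restricting to the sub-tower of frames whose $\mfP$ carries a lift of Frobenius: functoriality of the de Rham realization under the induced Frobenius on the tube upgrades each value to a $\varphi$-equivariant object, producing the required target with the superscript $\varphi$.

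Third, comparing with Berthelot in the smooth and proper case: if $f\colon X\to S$ is smooth and proper, then $X_T\to T$ is smooth and proper, and in this geometric situation the overconvergent de Rham realization of \cite{LBV} is computed by $Rf_* j^\dagger\Omega^\bullet_{]\overline{X_T}[_{\mfQ}/]\overline{T}[_{\mfP}}$ for any proper frame $(X_T,\overline{X_T},\mfQ)$ over $(T,\overline{T},\mfP)$ (such frames exist Zariski-locally, and the motivic construction kills the dependence on choices). This agrees with Berthelot's definition on the nose. The monoidality and pullback compatibility on qcqs maps are inherited from the three constituent functors of the display above, each of which is symmetric monoidal on the qcqs range.

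The main obstacle I foresee is the identification in the comparison step: to conclude that the abstract motivic realization literally equals $Rf_* j^\dagger\Omega^\bullet$ on the tube, one needs a concrete description of the de Rham realization of \cite{LBV} on the rigid motive of a proper-smooth $Y/T$. This is the technical heart of the argument; it relies on knowing that the realization of a smooth proper rigid space is computed by its naive dagger de Rham complex, together with the compatibility of $j^\dagger$ with the relevant descent and with the passage to solid quasi-coherent complexes (so that the comparison is strict and not only valid after a completion). Once this identification is in place, the Frobenius equivariance and the cocycle conditions follow formally from functoriality.
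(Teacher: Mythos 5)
Your overall architecture matches the paper's: compose a Monsky--Washnitzer-type lift (algebraic motives over $S$ to rigid motives over the various tubes) with the overconvergent de Rham realization of \cite{LBV}, assemble the frame-by-frame outputs into a limit, and then compare with Berthelot's complex by an explicit dagger de Rham computation. Two of your steps, however, diverge from the paper in ways that matter.

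The Frobenius step is a genuine gap. You propose to obtain the $\varphi$-enhancement by restricting to frames whose formal scheme $\mfP$ carries a Frobenius lift, and then pulling back along that lift. But the endofunctor $\varphi$ defining the target $\bigl(\lim_{(T,\overline{T},\mfP)\to S}\QCoh(]T[^\dagger_\mfP)\bigr)^\varphi$ does not involve any lift: it sends a frame $(\mfT,\mfT^\dagger_{\Q_p},f)$ to the \emph{same} formal scheme with the structure map over $W(k)$ twisted by $\sigma$ and the structural map to $S$ post-composed with $\varphi_S$ (\Cref{rmk:Frobs}). Your construction (a) requires a cofinality argument for the Frobenius-lifted subtower, (b) a priori depends on the choice of lift, and (c) would in any case produce an object of a different kind, which you would then still have to identify with a $\varphi$-fixed point for the canonical endofunctor on the full limit. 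The paper avoids all of this at the level of motives: universal homeomorphism invariance of $\DA$ gives a canonical monoidal functor $F\colon\DA(S)\to\DA(S)^\varphi$ (\Cref{dfn:F}), and the realization is $\varphi$-equivariant on the nose (\Cref{prop:phi-equi}), so the $\varphi$-structure drops out formally with no lifts and no cofinality. You should replace your Frobenius argument with this one.

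The ``dagger generic fibre'' functor you use as the middle arrow is also stated too optimistically: there is no functor $\Sm/T\to\RigDA(]T[^\dagger_\mfP)$ at the level of varieties, because a smooth $T$-scheme has no canonical formal or dagger lift; the functor exists only after passing to motives, via $\DA(T)\simeq\DA(\mfT_\sigma)\simeq\FDA(\mfT)\to\RigDA(\mfT_{\Q_p})\simeq\RigDA(\mfT^\dagger_{\Q_p})$, where the crucial middle equivalence is where the homotopy theory does the work (this is exactly \Cref{prop:MW}). You gesture at this (``the motivic construction kills the dependence on choices''), but since it is the key nontrivial input you should make it explicit rather than folding it into a named arrow. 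Your comparison with Berthelot is essentially \Cref{prop:BvsCS}, and you correctly flag the delicate point about passing from solid modules to $j^\dagger\mcO$-modules; note the paper handles this via the ``underlying module'' functor $u$, which is not exact, so the comparison of individual cohomology groups ultimately relies on the fact that the complex locally splits because its cohomology is a vector bundle (\Cref{rmk:warning}).
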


We note that, exactly as in \cite{LBV}, it is crucial to use Clausen--Scholze's approach to quasi-coherent modules $\mcD(A)$ over (dagger) rigid analytic varieties $A$ \cite{scholze-cond2} (see also \cite{andreychev}) in order to have a well-behaved ambient category of (overconvergent) rigid-analytic modules $\mcD(A)$. The exponent $\varphi$ stands for a Frobenius structure (cfr. \Cref{dfn:phi_equi,rmk:Frobs}).

Once this functor is defined, we can use the motivic six-functor formalism to deduce structural properties of some objects in the image. Indeed, as this realization enjoys \'etale descent and $\A^1$-invariance,  it formally factors over the category of (\'etale, rational) motives $\DA(S)$ over $S$.\footnote{Actually, $\A^1$-homotopy theory is also hidden in the definition of the functor itself, see below.} 

One peculiar trait of smooth and proper morphisms $f\colon X\to S$ is that their associated motives $\Q_S(X)$ lie in $\DA(S)_{\dual}$, that is, the subcategory of {strongly dualizable} objects in $\DA(S)$ (this is true in any good six-functor formalism). By Andreychev's classification of dualizable objects in rigid-analytic quasi-coherent modules \cite{andreychev}, we then deduce formally that each $R^qf_{\rig\ast}(X/(T,\overline{T},\mfP))$ is of finite type. We can be more precise, by proving the following (\Cref{prop:Berth-real,cor:Frobenius_structures}):

\begin{prop*}
    The functor above restricts to a functor
    $$
    \DA(S)_{\dual}\to\left( \lim_{(T,\overline{T},\mfP)\to S}\Vect(]T[_{\mfP}^\dagger)\right)^\varphi
    $$
    where the category $\Vect(]T[^\dagger_{\mfP})$ is the $\infty$-category of  vector bundles on the dagger variety $]T[^\dagger_{\mfP}$. In particular, each $R^if_{\rig*}(X/(T,\overline{T},\mfP))$   has a canonical  structure of overconvergent $F$-isocrystal over $S$.
\end{prop*}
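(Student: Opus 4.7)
The strategy is to leverage the formal properties of the realization functor $\dR_{\rig}^\varphi$ from the preceding proposition: its compatibility with the tensor product, together with the classification of dualizable objects in solid quasi-coherent categories, should upgrade the output from arbitrary quasi-coherent modules to honest vector bundles, and the $\varphi$-structure together with the functoriality in the frame should then yield the structure of an overconvergent $F$-isocrystal.

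First I would observe that the functor $\dR_{\rig}^\varphi$ on $(\Sm/S)^{\op}$ satisfies \'etale descent and $\A^1$-invariance, and is symmetric monoidal on qcqs morphisms. By the universal property of $\DA(S)$ as the localization of presheaves on $\Sm/S$ at these relations, it extends to a symmetric monoidal functor $\DA(S)^{\op}\to \bigl(\lim \QCoh(]T[_{\mfP}^\dagger)\bigr)^\varphi$, and symmetric monoidal functors preserve strongly dualizable objects. For $f\colon X\to S$ smooth and proper, the motive $\Q_S(X)\in \DA(S)$ is strongly dualizable, a standard consequence of Ayoub's six-functor formalism: the relative dualizing object of a smooth proper morphism is invertible, so $\Q_S(X)\simeq f_!f^!\Q_S$ is dualizable. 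Projecting the limit to a fixed frame $(T,\overline{T},\mfP)$, which is again symmetric monoidal, I deduce that the image of $\Q_S(X)$ in $\QCoh(]T[_{\mfP}^\dagger)^\varphi$ is strongly dualizable. Andreychev's classification \cite{andreychev} then identifies such objects with finite projective modules, i.e., vector bundles on the dagger variety $]T[_{\mfP}^\dagger$. Taking cohomology, each $R^if_{\rig*}(X/(T,\overline{T},\mfP))$ is a vector bundle.

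Finally, the compatibility with pullbacks from the preceding proposition yields canonical isomorphisms $u^*R^if_{\rig*}(X/(T,\overline{T},\mfP))\xrightarrow{\sim}R^if_{\rig*}(X/(T',\overline{T}',\mfP'))$ for every morphism $u$ of proper frames over $S$, with the cocycle condition built into the functoriality of the limit; combined with the Frobenius supplied by the $\varphi$-structure on the target, this produces an overconvergent $F$-isocrystal over $S$. The main obstacle I expect lies in the very first step: the factorization through $\DA(S)$ must respect simultaneously the limit over frames and the $\varphi$-structure, and the passage of dualizability from the limit to each component must be made precise. The reliance on Andreychev's theorem is also what makes the derived solid ambient category (rather than a classical one) indispensable here, since only in that setting do dualizable objects correspond to vector bundles on dagger rigid varieties.
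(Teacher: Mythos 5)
Your overall strategy (monoidality of the realization $\Rightarrow$ preservation of dualizable objects, Andreychev's classification of dualizables in solid quasi-coherent modules, functoriality in the frame for the isocrystal structure, and the formal $\varphi$-enrichment) is the same as the paper's, and most of the formal steps you describe are carried out there in \Cref{prop:Berth-real}, \Cref{cor:deRhamrealisationlimit1} and \Cref{cor:Frobenius_structures}. However, there is one genuine gap at the decisive point.

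Andreychev's theorem identifies dualizable objects of $\QCoh(\mcS_h)$ with $\Perf(\mcS_h)$, i.e.\ with \emph{perfect complexes}, not with finite projective modules. A perfect complex can perfectly well have non-projective cohomology groups (already $\mcO/f$ over a regular affinoid algebra is perfect but not locally free), so your step ``Taking cohomology, each $R^if_{\rig*}$ is a vector bundle'' does not follow from monoidality plus Andreychev. This is exactly the extra content the paper isolates: in \Cref{cor:RGammadagger} one first lands in $\Perf(\mcS^\dagger)$, then proves separately that each homology group is projective by (i) reducing, via the faithful flatness of $\mcO(\mcS^\dagger)\to\mcO(\widehat{\mcS})$ and the noetherianity of these rings, to the completed (non-dagger) affinoid, and (ii) invoking \cite[Theorem 4.46]{LBV}, whose proof uses the continuity/spreading-out property of $\RigDA$ over a cofiltered limit of adic spaces. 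The introduction even stresses that this vector-bundle statement is \emph{not} expected for arbitrary overconvergent isocrystals, so it cannot be a formal consequence of dualizability alone; it is a special feature of realizations of dualizable motives. Your proposal needs to supply this argument (or cite it) to close the gap between ``perfect complex'' and ``locally free cohomology''.
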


Note that there are more dualizable motives $M$ than just the ones attaches to smooth and proper maps $f\colon X\to S$ (as they are closed under direct factors, finite   limits, and push-forwards along smooth and proper maps). Their image under the functor above must be thought as ``relative rigid cohomology with coefficients in $M$''. The result above gives a version of Berthelot's conjecture for such coefficients as well, but we postpone the comparison to the classical rigid push-forward to a later paper (see \Cref{rmk:coeffs?}).

\subsection{Why motives?}

We can justify our usage of motivic  methods as follows: first of all, homotopies allow one to have canonical lifts of a (smooth) map $f:X\to S$ to a motive over any (weak) formal lift $\mfS$ of $S$. 
This is reminiscent of (and tightly related to) the fact that one can always perform such enhancements \'etale locally, and in a unique way (up to homotopy). 
Such remarks have already been used to give a streamlined definition of absolute rigid cohomology  in \cite{vezz-MW}. To some extent, this can be considered as a convenient (and natural) role of motives as a book-keeping tool to monitor once and for all the various choices and charts involved.

Somehow more essentially, we make use of their six-functor formalism, which allows us to deduce finiteness properties of the realization of a smooth and proper map $f\colon X\to S$ as the associated motive $f_*\one$ is dualizable on the base (by the projection formulas and ambidexterity), see \Cref{rmk:propersmooth,rmk:dualinDA} (see also \Cref{whymot} for further details).

Adding to this, we use the ``spreading out'' property of motives, which consists in the fact that motives of a projective limit of adic spaces $\mcS\sim \varprojlim \mcS_i$ are the colimit of the categories of motives over each $\mcS_i$. This seemingly innocuous statement implies that the image of a dualizable motive via a realization taking values in quasi-coherent sheaves over $S$ is a \emph{vector bundle} on the base (see \cite[Theorem 4.46]{LBV}). %
Even if Berthelot's conjecture is expected to hold for arbitrary {overconvergent isocrystals} %
\cite{tsuzuki_coherence}, \cite[Conjecture B1(F)]{lazda-conj} we do not expect this  property to hold in general. 
We use  this stronger fact   to prove a comparison between the ``solid'' approach and the original definition of relative rigid cohomology by Berthelot (and expanded by Chiarellotto--Tsuzuki \cite{chiar-tsu}) based on  $j^\dagger\mcO$-modules and their higher push-forward functors, see \Cref{rmk:BvsCS2}. 

Finally, the formal property of ``semi-separatedness'' of motives (i.e. the fact that they are insensitive to universal homeomorphisms, see \cite[\S 1]{AyoubEt}) can be used to equip any motivic realization functor with a Frobenius structure. We  use this fact to formally promote a rigid cohomology realization from overconvergent isocrystals to overconvergent \emph{F-isocrystals}, see \Cref{sec:Frob}.

Note there are other canonical ways to define rigid  cohomology using motivic realizations (e.g. à la Hyodo--Kato , see \cite{pWM,BGV} or à la de Rham--Fargues--Fontaine, see \cite[\S 6.3]{LBV}). We use in this paper the isocrystalline approach as it is the one used to formulate Berthelot's conjecture, and we postpone the study of the relations between these definitions to a future work.

\subsection{Organization of the paper}

In \Cref{sec:oc} we define an overconvergent version of the relative rigid de Rham cohomology introduced in \cite{LBV}. Following the same line of thought as in \cite{vezz-MW} and \cite{LBV} we  promote this construction in \Cref{sec:b} for frames over schemes over $k$ using the approach of Monsky--Washnitzer. By letting the frames vary, we define a realization \`a la Berthelot and we deduce our main result. A large part of the section is devoted to describing the connection between Bethelot's approach to rigid cohomology and the one based on solid modules on dagger varieties.

In \Cref{app} we 
start investigating whether the functor $R\Gamma_{\rig}$ defines a transformation between the motivic six-functor formalism and some appropriate overconvergent, isocrystalline six-functor formalism of analytic $\mcD$-modules. This question makes sense in light of the recent introduction by Rodriguez Camargo \cite{rodriguez-camargo} of a de Rham analytic stack $\mcX_{\dR}$ attached to a rigid analytic variety, and hence of a notion of (an \icat of) $\mcD$-modules over $\mcX$ following the ``stacky'' approach to de Rham/crystalline/prismatic cohomology of Simpson \cite{simpson-stacks}, Drinfeld \cite{drinfeld-stacky} and Bhatt--Lurie \cite{bhatt-lurie}. As we only focus on Berthelot's conjecture, we restrict our inspection to the compatibility between push-forward functors $f_*$ along smooth and proper maps, and postpone any thorough analysis to a future work.

\subsection*{Acknowledgements}

We thank Juan Esteban Rodr\'iguez Camargo for sending us a preliminary version of his article \cite{rodriguez-camargo} and for having answered many questions on the subject. We are grateful to Tomoyuki Abe and Bernard Le Stum for their interest and their numerous precious remarks on earlier versions of this paper, {as well as to an anonymous referee for their constructive comments, which led in particular to the development of \Cref{sec:b+}}. We thank Chris Lazda for having elucidated the relation between his work \cite{lazda-RH} and Berthelot's conjecture, {and Marco D'Addezio for conversations around \cite{dadd}}.

\section{The overconvergent de Rham realization}\label{sec:oc}

In \cite{LBV} the overconvergent relative de Rham cohomology for rigid analytic varieties over a rigid variety $\mcS$ in characteristic zero is introduced. We now want to extend this definition to the case of a dagger variety. We will use calligraphic fonts for adic spaces (and their variations).

\subsection{Overconvergent quasi-coherent sheaves}

One of the main ingredients of the definition of rigid analytic de Rham cohomology is the category of quasi-coherent modules for rigid varieties as introduced by Clausen and Scholze. We now give an overconvergent version of this construction which is useful to our purposes.

Throughout this section, we fix a non-archimedean field $K$. All adic spaces that we will consider here are over $K$.

\begin{dfn}(See \cite[Definition 3.3]{LBV}). 
    A \emph{dagger variety} over $K$, or a \emph{dagger structure}   on a rigid analytic variety $\mcS$ over $K$
    \footnote{For us, a rigid analytic variety over $K$ is an adic space which is quasi-separated and locally of  finite type over $\Spa(K,\mcO_K)$.}
    , is the datum $\mcS^\dagger$ of a strict inclusion $\mcS\Subset \mcS_0$ of rigid analyitic varieties over $K$ where we write $\Subset$ for $\Subset_K$, for brevity\footnote{{By this we mean that for any open affinoid $\mcU=\Spa(R,R^\circ)$ in $\mcS$, the inclusion $\mcU\subseteq\mcS'$ factors over the compactification $\widehat{\mcU}^{/K}=\Spa(R,\mcO_K)$ over $K$, cfr. \cite[Definition 3.1]{LBV}.}}. Any such object gives rise to a pro-adic space $\varprojlim \mcS_h$ as $\mcS_h$ ranges among strict neighbourhoods of $\mcS$ inside $\mcS_0$. We will denote by $\widehat{\mcS}^\dagger$ the underlying  variety $\mcS$.  A morphism of dagger structures $\mcS^\dagger\to \mcT^\dagger$ is a map $\widehat{\mcS}^\dagger\to \widehat{\mcT}^\dagger$ and a compatible map between the two associated pro-objects. Any such morphism admits a representation as a map between diagrams $\{\mcS_h\rightarrow \mcT_h \}$ that we will call a \emph{straightening}. The category they form will be denoted by $\Ad^\dagger$. 
     We say $\mcS^\dagger$ is \emph{affinoid} if the strict inclusion $\mcS\Subset \mcS_0$ is a rational open embedding of affinoid varieties. The full subcategory of affinoid dagger structures is denoted by $\Aff^\dagger$.
\end{dfn}

\begin{rmk}
    Any  proper rigid analytic variety over $K$ has a (unique) dagger structure given by $\mcX\Subset \mcX$.
\end{rmk}

\begin{exm}
    Let $\B^1$ be $\Spa K\langle T\rangle$ and $\T^1$ be its open subvariety $\Spa K\langle T^{\pm1}\rangle$. We denote by $\B^{1\dagger}$ the dagger structure induced by $[\B^1\Subset\P^1]$ and by $\T^{1\dagger}$ the one induced by $[\T^{1}\Subset\P^1]$.
\end{exm}

\begin{rmk}
    A map between dagger structures $\mcS^\dagger\to \mcT^\dagger$ is an equivalence class of maps of pairs $(\mcS\Subset \mcS_1)\to (\mcT\Subset \mcT_1)$  with $\mcS_1\subset \mcS_0$ and $\mcT_1\subset \mcT_0$, where two such maps are equivalent if there is a common refinement $(\mcS\Subset \mcS_2)\to (\mcT\Subset \mcT_2)$. 
    In the  affinoid setting, the pro-object induced by a dagger structure is always countable, and the map between the associated pro-objects of two dagger structures (i.e. a map of dagger algebras, see \cite[Proposition A.22]{vezz-MW}) determines by completion the map between the underlying affinoid rigid varieties.  
\end{rmk}

\begin{dfn}
    Let $\mcS^\dagger=[\widehat{\mcS}^\dagger\Subset \mcS_0]$ be a dagger variety.  
    A map in $\Ad^\dagger/\mcS^\dagger $ is an open immersion, resp.\ \'etale, resp.\ smooth, if it admits a straightening $\mcY_h\to \mcX_h$ which is levelwise so (compare \cite[Definition\,A.24]{vezz-MW}).
    A collection of \'etale maps $\{\mcY^\dagger_i\to \mcX^\dagger\}$ is a cover if it jointly covers $\widehat{\mcX}^\dagger$.   
    We let $\Sm/\mcS^\dagger$ be the full subcategory of $\Ad^\dagger/\mcS^\dagger$  of smooth maps over $\mcS^\dagger$. 
    If $\mcS^\dagger$ is affinoid, we let $\Aff\Sm/\mcS^\dagger$ be its full subcategory made of affinoid dagger spaces   which are smooth over $\mcS^\dagger$. 
    By gluing over open immersions, we can define the category of dagger spaces from the category of dagger structures as in \cite[Proposition 2.24]{vezz-MW}.
\end{dfn}

\begin{rmk}
    The small \'etale topos on $\mcS^\dagger$ is equivalent to the one on $\widehat{\mcS}^\dagger$ \cite[Corollary A.28]{vezz-MW}. 
    If the latter is quasi-compact quasi-separates (qcqs)  any \'etale cover of $\mcS^\dagger$ can be refined by a finite cover $\{\mcY^\dagger_i\to \mcX^\dagger\}$ admitting a straightening $\{\mcY_{ih}\to \mcX_h\}$ consisting of \'etale covers (cfr.\  \cite[Remark 3.5]{LBV}). 
\end{rmk}

\begin{rmk}\label{rmk:berk}
    If $\mcX^\dagger\to \mcS^\dagger$ is a smooth map of affinoid dagger varieties such that $\mcX\to \mcS$ is surjective, then we can assume $f_0\colon \mcX_0\to \mcS_0$ to be surjective  and that $\mcX_h=\mcX_0\times_{\mcS_0}\mcS_h$ as indeed, $\Int(\mcX_0/K)=f_0^{-1}(\Int(\mcS_0/K))$ (see \cite[Corollary A.11]{vezz-MW}) so that $\mcX_0\times_{\mcS_0}\mcS_h\Subset \mcX_0$.
\end{rmk}

\begin{dfn}
    Let $\mcX$ be an affinoid rigid analytic space. We let $\QCoh(\mcX)$ be the  symmetric monoidal category of $\mcO(\mcX)$-solid modules as defined in \cite{scholze-cond2}\footnote{This category is denoted by $\mathrm{Mod}(\mcX)$ in \cite{rodriguez-camargo} and by $\mathrm{QCoh}(\mcX)$ in \cite{LBV}.}. 
    For simplicity, we fix a suitably large strong limit cardinal $\kappa$ and rename $\QCoh(\mcX)=\QCoh(\mcX)_{\kappa}$ to be its full symmetric monoidal category generated by (the solidification of the free $\mcO(\mcX)$-sheaves represented by) $\kappa$-small profinite sets (see \cite{andreychev,mann,rodriguez-camargo})\footnote{One can alternatively work in \emph{light} solid modules.}.  These definitions can be extended by analytic descent to arbitrary rigid analytic spaces, see \cite{mann}. 
\end{dfn}

\begin{rmk}
    Recall that  for an affinoid $\mcX$, the category $\QCoh(\mcX)=\QCoh(\mcX)_\kappa$ is presentable, and even compactly generated  \cite[Proposition 3.23]{andreychev}   
    and that for any map between affinoid rigid analytic spaces $f$, the base change functors $f^*$ are compact-preserving (see e.g. \cite[Proposition 12.6]{scholze-cond2}).
\end{rmk}

Following the notation of~\cite{lurie, HA}, we denote by {$\Prl$} (resp. {$\Prr$}) the 
$\infty$-category of presentable $\infty$-categories and 
left adjoint (resp. right adjoint) functors. 
We denote by $\Prlo\subset \Prl$ the sub-$\infty$-category of compactly generated $\infty$-categories and compact-preserving functors. 
The $\infty$-category $\Prl$ [resp. $\Prlo$] underlies a (symmetric) monoidal structure {(see \cite[Proposition 4.8.1.15 and Lemma 5.3.2.11]{HA})} . The categories of commutative ring objects  $\CAlg(\Prl)$ [resp. $\Prloo$] with respect to the corresponding tensor product consists in presentable [resp. compactly generated] symmetric monoidal categories, in which the tensor product commutes with small colimits [and restricts to compact objects].

\begin{dfn}\label{dfn:D(dag)}
    Let $\mcS^\dagger=[\widehat{\mcS}^\dagger\Subset \mcS_0]$ be an affinoid dagger space. We let $\QCoh^\dagger(\mcS^\dagger)$ be the colimit  $\varinjlim_{\widehat{\mcS}^\dagger\Subset \mcS_h\subset \mcS_0}\QCoh(\mcS_h)$  computed in $\Prl$ (or, equivalently, in $\Prlo$) using the base change localization maps $j_{hh'}^*\colon\QCoh(\mcS_h)\to\QCoh(\mcS_{h'})$ as connecting functors. There are natural functors $j^*_h\colonequals -\otimes^\blacksquare_{\mcS_h}\mcS^\dagger\colon\QCoh(\mcS_h)\to\QCoh^\dagger(\mcS^\dagger)$ and a natural functor $-\otimes_{\mcS^\dagger}^\blacksquare {\widehat{\mcS}^\dagger}\colon \QCoh^\dagger(\mcS^\dagger)\to\QCoh({\widehat{\mcS}^\dagger})$.
\end{dfn}

\begin{rmk}\label{rmk:QCohdagger}
    By means of the anti-equivalence between $\Prl$ and $\Prr$, 
    the category $\QCoh^\dagger(\mcS^\dagger)$ can be equivalently defined as the inverse limit $\varprojlim\QCoh(\mcS_h)$ along the ``forgetful'' functors $j_{hh'*}$ computed in $\Cat_\infty$, 
    that is, the intersection of all {$\mcS_0$}-modules which are solid with respect to all $\mcS_{h\blacksquare}$'s.  
    It also coincides with the (derived) category of $(\mcO(\mcS^\dagger),\mathcal{M}^\dagger)$-solid modules, where $\mcO(\mcS^\dagger)=\varinjlim\underline{\mcO(\mcS_h)}$ and $\mathcal{M}^\dagger[T]\colonequals\varinjlim\mcO(\mcS_h)_\blacksquare[T]$ (see \cite[Lemma 2.7.4]{mann}). Note that we can replace the system $\cdots\subset \mcS_{h+1}\subset \mcS_{h}\subset\cdots$ with the system $\cdots\subset \mcS_{h+1}^{/K}\subset \mcS_h^{/K}\subset\cdots$ where $\mcS^{/K}$ denotes the relative compactification of $\mcS$ over $K$ (see \cite[Theorem 5.1.5]{huber}, \cite[Definition 3.1]{LBV}) so that $\QCoh(\mcS^\dagger)$ is equivalent to $\varinjlim\QCoh(\mcS_h^{/K})\simeq\varinjlim\QCoh((\mcO(\mcS_h),K^\circ))$ and to $\mcO(\mcS^\dagger)$-modules in $\QCoh(\mcS_0)$, {or in $\QCoh(\mcS^{/K}_0)$, which are $\mcO(\mcS^\dagger)$-modules in $\QCoh(\Spa K)$}. 
\end{rmk}

As a special case of \cite[Proposition 2.7.2]{mann} we deduce the following.

\begin{prop}
    The functor $\mcS^\dagger\mapsto \QCoh^\dagger(\mcS^\dagger)$ from affinoid dagger varieties to $\Cat_\infty$ has analytic descent. In particular, one can define the category $\QCoh^\dagger(\mcS^\dagger)$ for an arbitrary dagger variety.\qed
\end{prop}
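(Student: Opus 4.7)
The plan is to leverage the presentation of $\QCoh^\dagger$ as a category of modules over an explicit analytic ring structure and then invoke Mann's descent theorem directly.

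First, I would use the identification given in the preceding remark: for an affinoid dagger variety $\mcS^\dagger = [\widehat{\mcS}^\dagger \Subset \mcS_0]$, the category $\QCoh^\dagger(\mcS^\dagger)$ is equivalent to the category of $(\mcO(\mcS^\dagger), \mathcal{M}^\dagger)$-solid modules, where
$$
\mcO(\mcS^\dagger) = \varinjlim_h \underline{\mcO(\mcS_h)}, \qquad \mathcal{M}^\dagger[T] = \varinjlim_h \mcO(\mcS_h)_\blacksquare[T].
$$
This exhibits $\mcS^\dagger \mapsto \QCoh^\dagger(\mcS^\dagger)$ as the functor ``modules over a pre-analytic ring'' in the sense of Clausen--Scholze, built from filtered colimits of the usual solid analytic ring structures on the strict neighborhoods $\mcS_h$. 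In particular, $\mcS^\dagger \mapsto (\mcO(\mcS^\dagger), \mathcal{M}^\dagger)$ is a sheaf of pre-analytic rings on affinoid dagger varieties in the analytic topology, because each piece of the colimit is so, and because covers of $\mcS^\dagger$ admit (by the preceding remarks) straightenings to level-wise analytic covers $\{\mcU_{i,h} \to \mcS_h\}$ that interact compatibly with the connecting functors $j^*_{hh'}$.

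Next, I would invoke \cite[Proposition 2.7.2]{mann}, which provides analytic descent for the ``modules'' functor applied to any (pre-)analytic ring structure satisfying the relevant compatibility conditions. Concretely, given an analytic cover $\{\mcU_i^\dagger \to \mcS^\dagger\}$ of affinoid dagger varieties, Mann's result yields
$$
\QCoh^\dagger(\mcS^\dagger) \simeq \lim_{[n] \in \Delta} \QCoh^\dagger(\mcU_\bullet^{\dagger,[n]}),
$$
where $\mcU_\bullet^\dagger$ is the Čech nerve. Analytic descent on arbitrary dagger varieties then follows by gluing, allowing one to define $\QCoh^\dagger(\mcS^\dagger)$ for any (not necessarily affinoid) dagger variety as a limit over an affinoid cover.

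The only genuine content to check is that the analytic ring structure $(\mcO(\mcS^\dagger), \mathcal{M}^\dagger)$ for the dagger setting falls within the scope of Mann's framework and that the induced maps along an analytic cover of dagger affinoids form a cover of analytic rings in his sense. This is the step I expect to require the most care: one must verify that the filtered colimit presentation does not spoil the descent properties, exploiting the fact that covers of affinoid dagger varieties are (up to refinement) finite and admit straightenings to level-wise covers, so the compatibility reduces to the non-dagger case already handled by Mann. Once this is in place, the statement is a direct specialization.
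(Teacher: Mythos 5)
Your approach is essentially identical to the paper's: the proposition is stated there as an immediate consequence of Mann's Proposition 2.7.2, using the identification (from the preceding remark, citing Mann's Lemma 2.7.4) of $\QCoh^\dagger(\mcS^\dagger)$ with $(\mcO(\mcS^\dagger),\mathcal{M}^\dagger)$-solid modules. The extra discussion you add about straightening covers level-wise is a reasonable elaboration of why Mann's hypotheses apply, and is consistent with the paper's perspective; the paper also sketches an alternative route via Clausen--Scholze's descent criterion in the following remark, but that is not the proof it actually invokes.
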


\begin{rmk}\label{rmk:easydescent}
    The previous descent result can also be obtained directly from \cite[Proposition 10.5]{scholze-cond} by checking the two conditions listed there.  
    Indeed, since filtered colimits commute with limits in $\Prlo$ {(since they do in spaces, and $\Prlo$ is compactly generated, see e.g. \cite[Proposition 2.8.4]{agv})} and descent holds for $\QCoh$ (see \cite[Theorem 4.1]{andreychev}), the category $\QCoh^\dagger(\mcX^\dagger)$ is the limit of $\QCoh^\dagger(\mcU_0^\dagger)\to\QCoh^\dagger((\mcU_0\cap \mcU_1)^\dagger)\leftarrow\QCoh^\dagger(\mcU_1^\dagger)$ in $\Prlo$ for any Laurent cover $\{\mcU_0,\mcU_1\}$ of $X$. In particular, the two localization maps $\QCoh^\dagger(\mcX^\dagger)\to \QCoh^\dagger(\mcU_i^\dagger)$, $i=0,1$, are jointly conservative hence the second condition. 
    Similarly, from the description of $\mcD^\dagger(\mcS^\dagger)$ as the intersection of the $\mcD(\mcS_{h\blacksquare})$'s (see \Cref{rmk:QCohdagger}) and \cite[Proposition 4.12(iv)]{andreychev} we deduce the first condition from the case of rigid varieties.
\end{rmk}

\subsection{Overconvergent motives}

It is well known that any definition of rigid cohomology ``à la Monsky--Washnitzer'' needs to use homotopies in order to get rid of all the possible choices involved (of lifts, and dagger structures). Just like in \cite{LBV} we use motives to clarify the constructions and to have better functorial properties on the nose. We first recall the main result of \cite{vezz-MW}.
     
\begin{dfn}\label{dfn:motives}
    Let $\mcS^\dagger$ be a dagger variety over $K$. 
    We let $\RigDA^{\eff}(\mcS^\dagger)$ be the category of {effective} dagger \'etale motives over $\mcS^\dagger$ with rational coefficients as in \cite{vezz-MW}\footnote{
    {In \cite{vezz-MW}  $\RigDA^{\eff}(\mcS^\dagger)$ is denoted by $\RigDA^{\dagger\eff}_{\et}(\mcS^\dagger)$.}}, that is, the full subcategory of $\B^{1\dagger}$-invariant \'etale (hyper-)sheaves on $\Sm/\mcS^\dagger$ inside $\Sh_{\et}(\Sm/\mcS^\dagger,\Q)$. {We then let  $\RigDA^{}(\mcS^\dagger)$ be the category $\RigDA^{\eff}(\mcS^\dagger)[T^{-1}]$ in the sense of \cite[Definition 2.6]{robalo}} 
 where $T$ is the $\B^{1\dagger}$-invariant cofiber of the split inclusion 
 induced on the associated representable sheaves by the unit section $\mcS^\dagger\to\T^1_{\mcS^\dagger}$.  {In particular, there is a \emph{universal} monoidal functor $\RigDA^{\eff}(\mcS^\dagger)\to \RigDA^{}(\mcS^\dagger)$ that sends $T$ to an invertible object. }
    There is a natural functor $\mcX^\dagger\mapsto\Q_{\mcS^\dagger}(\mcX^\dagger)$ from $\Sm/\mcS^\dagger$ to $\RigDA(\mcS^\dagger)$, where now $\Q_{\mcS^\dagger}(\mcX^\dagger)$ is the motive canonically associated to the presheaf of $\Q$-vector spaces represented by $\mcX^\dagger$.  
{If {$\mcS$} 
    is a rigid variety over $K$, we define similarly the categories $\RigDA^{\eff}(\mcS)$ and $\RigDA(\mcS)$ as in \cite[Definitions 2.1.11 and 2.1.15]{agv}\footnote{In \cite{agv}, they are denoted by $\mathrm{RigSH}^{(\eff)}_{\et}(\mcS,\Q)\simeq \mathrm{RigSH}^{\wedge(\eff)}_{\et}(\mcS,\Q)$ (see \cite[Proposition 2.4.19]{agv}).}
    }
\end{dfn}

{
\begin{rmk}\label{rmk:UP}
    The category $\RigDA^{\eff}(\mcS^\dagger)$ (and therefore, also $\RigDA(\mcS^\dagger)$) has an obvious universal property, as it is defined by a localization of a category of presheaves (cfr. \cite[Theorem 5.1.2]{robalo}). In particular, a colimit-preserving functor from $\RigDA(\mcS^\dagger)$ is uniquely determined (via a Kan extension) by its restriction along the Yoneda map $\Sm/\mcS^{\dagger}\to\RigDA(\mcS^\dagger)$. 
    More than that, the assignment $\mcS^\dagger\mapsto \RigDA(\mcS^\dagger)$ is functorial in $\mcS^\dagger$, in the sense that any map $f\colon \mcT^\dagger\to\mcS^\dagger$ of dagger varieties induces an adjunction $(f^*,f_*)$ defined by $f^*\Q_{\mcS^\dagger}(\mcX')\simeq\Q_{\mcT^\dagger}(\mcX^\dagger\times_{\mcS^\dagger}\mcT^\dagger)$. More precisely, one  can define a functor
    $$
    (\mathrm{Ad}^\dagger)^{\op}\to \CAlg(\Prl)
    $$
     which is obtained by applying universal constructions to the (1-categorical) fibered category $\int_{\mcS^\dagger\in\mathrm{Ad}^\dagger}\Sm/\mcS^\dagger$ (see \cite[\S 9.1 Step (1)]{robalo}). In particular, the map of prestacks $$(\mcS^\dagger\mapsto\Sm/\mcS^\dagger) \to (\mcS^\dagger\mapsto\Sm/\widehat{\mcS}^\dagger)$$
     (which preserves \'etale covers and sends $\B^{1\dagger}$ to $\B^1$) induces a natural transformation $\RigDA(-)\to \RigDA(\widehat{(-)})$ (cfr. \cite[\S 9.1, Step (2)]{robalo}). 
\end{rmk}

}

\begin{prop}
    The presheaf $\mcS^\dagger\mapsto \RigDA(\mcS^\dagger)$ from dagger varieties over $K$ to $\CAlg(\Prl)$ 
    has \'etale (hyper)descent. 
    The completion functor  induces a canonical equivalence of sheaves $\RigDA(-)\simeq \RigDA(\widehat{-})$. 
\end{prop}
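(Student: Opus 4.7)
The plan is to follow the blueprint of \cite{vezz-MW} and reduce the three assertions to results already established there, plus a routine sheafy assembly.

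For the first assertion, given a morphism $f\colon \mcS^\dagger\to\mcT^\dagger$ of dagger varieties, I would fix a straightening and use base change to obtain a symmetric monoidal functor $\Sm/\mcT^\dagger\to\Sm/\mcS^\dagger$ and hence a symmetric monoidal left adjoint $f^*\colon \Psh(\Sm/\mcT^\dagger,\Q)\to \Psh(\Sm/\mcS^\dagger,\Q)$ preserving representables. Two choices of straightening yield canonically equivalent functors (cfr.\ \cite[Proposition A.22]{vezz-MW}), so after coherent straightening one obtains a well-defined presheaf valued in $\CAlg(\Prl)$. To descend it to $\RigDA$, I would check that $f^*$ preserves \'etale hypercovers, $\B^{1\dagger}$-local equivalences, and the Tate object $T$. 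Each of these is immediate: \'etale covers pull back to \'etale covers by definition of smooth maps in $\Ad^\dagger/\mcS^\dagger$, the class of $\B^{1\dagger}$-equivalences is stable under pullback since $f^*(\B^{1\dagger}_{\mcT^\dagger})=\B^{1\dagger}_{\mcS^\dagger}$, and $T$ is defined as the cofiber of a pullback-stable split inclusion, so $f^*T\simeq T$.

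For \'etale hyperdescent, I would argue as in \cite[\S2]{vezz-MW}: the underlying presheaf $\mcS^\dagger\mapsto \Sh_{\et}^\wedge(\Sm/\mcS^\dagger,\Q)$ of \'etale hypersheaves is an \'etale sheaf of presentable $\infty$-categories by general topos-theoretic yoga, and the subsequent Bousfield localizations enforcing $\B^{1\dagger}$-invariance and $T$-stabilization are local on the small \'etale site of $\mcS^\dagger$, so they are preserved by \'etale descent. The fact that the small \'etale topos of $\mcS^\dagger$ agrees with that of $\widehat{\mcS}^\dagger$ \cite[Corollary A.28]{vezz-MW} also ensures that the notion of \'etale cover for dagger varieties is the expected one.

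The substance of the proposition is the completion equivalence $\RigDA(\mcS^\dagger)\simeq \RigDA(\widehat{\mcS}^\dagger)$. This is exactly the main comparison theorem of \cite{vezz-MW}, so at the pointwise level the proof amounts to invoking it. Its content may be summarized as follows: (i) every smooth rigid variety over $\widehat{\mcS}^\dagger$ admits, \'etale locally, a smooth dagger lift over $\mcS^\dagger$, giving essential surjectivity on representables after \'etale sheafification; (ii) any two such dagger lifts are connected by a zig-zag of $\B^{1\dagger}$-homotopies, so the associated motive is independent of the choice and the comparison is fully faithful. Once the equivalence is known objectwise, its promotion to an equivalence of \'etale sheaves of symmetric monoidal presentable $\infty$-categories is automatic from the functoriality and descent established in the first two steps, since the completion functor and the pullback functors along morphisms of dagger varieties commute in the obvious sense. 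The only real obstacle is the pointwise comparison, which is the main result of \cite{vezz-MW} and is quoted here without reproving it.
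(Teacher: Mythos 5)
Your proposal follows essentially the same route as the paper: build the presheaf structure via straightenings, verify \'etale (hyper)descent, and reduce the completion equivalence to the affinoid case handled by \cite[Theorem 4.23]{vezz-MW}. So on the whole the approach is the same, and the pointwise comparison is indeed the only substantive external input.

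There is, however, one place where your argument papers over the genuinely nontrivial step. You write that $\mcS^\dagger\mapsto \Sh_{\et}^\wedge(\Sm/\mcS^\dagger,\Q)$ is an \'etale sheaf of presentable $\infty$-categories ``by general topos-theoretic yoga'' and that the $\B^{1\dagger}$- and $T$-localizations are ``local on the small \'etale site.'' Neither of these is automatic: the presheaf is built from the \emph{big} smooth site $\Sm/(-)^\dagger$, and \'etale descent for it is not a formal consequence of descent on the small site. The ingredient one actually needs (and the paper makes this explicit, following the template of \cite[Theorem 2.3.4]{agv}) is the identification $(\Sm/\mcS^\dagger)_{/\mcU^\dagger}\simeq \Sm/\mcU^\dagger$ for any $\mcU^\dagger$ \'etale over $\mcS^\dagger$; equivalently, that any map from a dagger-smooth variety to a dagger-\'etale variety over $\mcS^\dagger$ is smooth. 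This is what makes the restriction functors along \'etale maps well behaved and makes the $\B^{1\dagger}$- and $T$-localizations compatible with descent. It is proved by choosing straightenings and checking the statement levelwise. Your proposal should name and verify this lemma rather than subsume it under ``yoga''; once it is in place, the rest of your outline (preservation of covers, of $\B^{1\dagger}$-equivalences, of the Tate object, and the reduction of the completion equivalence to the local statement from \cite{vezz-MW}) goes through as you describe.
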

\begin{proof}
    For the first claim, as in the proof of \cite[Theorem 2.3.4]{agv}, it suffices to prove that for any $\mcU^\dagger$ \'etale over $\mcS^\dagger$ the category $(\Sm/\mcS^\dagger)_{/\mcU^\dagger}$ is equivalent to $\Sm/\mcU^\dagger$ - in other words, that any map between a dagger smooth and a dagger \'etale variety over $\mcS^\dagger$ is smooth. 
    But this is obvious as it is so level-wise for an appropriate choice of straightenings. 
    We can then prove the equivalence $\RigDA(\mcS^\dagger)\simeq\RigDA(\widehat{\mcS}^\dagger)$ locally, as it is done in  \cite[Theorem 4.23]{vezz-MW}. 
\end{proof}

{
\begin{rmk}\label{rmk:UP2}
We then deduce that $\RigDA$ is an element of the category of \'etale hypersheaves $\Sh^{\wedge}_{\et}(\mathrm{Ad}^\dagger,\Cat_{\infty})$. Note that this category is canonically equivalent to {$\Sh^{\wedge}_{\et}(\mathrm{Aff}^\dagger,\Cat_{\infty})$} %
(any other dense subcategory would work, see \cite[Lemma 2.1.4]{agv}). As such, the values on $\RigDA(\mcS^\dagger)$ (and natural transformations from $\RigDA(\mcS^\dagger)$ to other   sheaves) are completely determined by their restriction to any dense subcategory of $\Ad^\dagger$.
\end{rmk}

}

\subsection{Overconvergent de Rham cohomology}

We come now to the core definition of this section, the relative overconvergent de Rham cohomology for a dagger variety as the base.
From now on, we assume that $K$ has characteristic zero.

We recall that from \cite[Corollary 4.39]{LBV} there is a functor
$$
\dR_{\mcS}\colon\RigDA(\mcS)\to\QCoh(\mcS)^{\op}
$$
which computes the relative overconvergent rigid cohomology over the base $\mcS$ (dagger structures are taken relatively to this base). We will denote by $\dR_{\mcS}(\mcX)$ or $\dR(\mcX/\mcS)$ the complex $\dR_{\mcS}(\Q_{\mcS}(\mcX))$ for brevity.  
\begin{dfn}
    Let $\mcS^\dagger $ be a dagger affinoid variety. 
    We let $\dR_{\mcS^\dagger}$ be the  functor
    $$
     (\Aff\Sm/\mcS^\dagger)\to \QCoh^\dagger(\mcS^\dagger)^{\op}
    $$
    defined by sending a smooth map $\mcX^\dagger\to \mcS^\dagger$ straightened by maps $\{\mcX_h\to \mcS_h\}$ to the object $\varinjlim_h \dR_{\mcS_h}(\mcX_h)\otimes^\blacksquare_{\mcS_h}\mcS^\dagger$  where the transition maps are induced by the canonical maps$$\dR_{S_h}(\mcX_h)\otimes^\blacksquare_{\mcS_h}\mcS_{h'}\simeq\dR_{\mcS_{h'}}(\mcX_h\times_{\mcS_h}\mcS_{h'})\to \dR_{\mcS_{h'}}(\mcX_{h'})$$ by means of the compatibility of the contravariant functor $\dR$ with pullbacks (see \cite[Corollary 4.39]{LBV}).      
\end{dfn}
\begin{rmk}
    The definition is  independent of the choice of the straightening. To see this, note that (by cofinality of the maps $\mcX_h\to \mcY_h$) the colimit of the definition agrees with the canonical colimit $\varinjlim_f \dR_{\mcS_h}(\mcX_k)\otimes_{\mcS_h}^\blacksquare \mcS^\dagger$ as $f\colon (\mcX\Subset \mcX_k)\to (\mcS\Subset \mcS_h)$ ranges among representatives of the fixed map $\mcX^\dagger\to \mcS^\dagger$. 
\end{rmk}

\begin{rmk}\label{rmk:computeOmega0}
    We can give an explicit description of this functor, as follows. 
    {Let $\mcX^\dagger$ be in $\Aff\Sm/\mcS^\dagger$.} 
    We first observe that the dagger structure $\mcX^\dagger$ gives rise to canonical (and compatible) choices of relative dagger structures for each $\mcX_h/\mcS_h$ given by the strict relative inclusions $\mcX_h\Subset_{\mcV_h}\mcX_0\times_{\mcS_0}\mcS_h$ where $\mcV_h=\mathrm{Im}(\mcX_h\to \mcS_h)$. 
    By means of \cite[Proposition 4.36(2)]{LBV} we have then 
    \begin{align*}
        \varinjlim_h \dR_{\mcS_h}(\mcX_h)\otimes_{\mcS_h}^\blacksquare \mcS^\dagger &\simeq \varinjlim_{i\geq h}  \varinjlim_{\mcX_h\Subset_{\mcV_h}\mcY\subset \mcX_0} \underline{\Omega}_{\mcY/\mcS_h} \otimes_{\mcO(\mcS_h)}^\blacksquare\mcO(\mcS_i)\\
        &\simeq \varinjlim_{i\geq h, \mcX_h\Subset_{\mcV_h}\mcY\subset \mcX_0} \underline{\Omega}_{\mcY\times_{\mcS_h}\mcS_i/\mcS_i}.
    \end{align*}
    Note that a final indexing subcategory is given by the spaces $\mcX_h\times_{\mcS_h}S_i$ themselves, so that 
    $$
        \dR_{\mcS^\dagger}(\mcX^\dagger)=\varinjlim_{i\geq h}\underline{\Omega}_{\mcX_h\times_{\mcS_h}\mcS_i/\mcS_i}\simeq\varinjlim_{h}\underline{\Omega}_{\mcX_h/\mcS_h}. 
    $$
\end{rmk}

\begin{rmk}
 We could have {used} the description above as the definition of the functor $\dR_{\mcS^\dagger}$. Instead, we opted to use as much as possible the results of \cite{LBV} and hence for a definition based on the constructions developed there. They will come in handy in the following proof.   
\end{rmk}

\begin{prop}\label{prop:dRdaggerismotivic}
    Let $\mcS^\dagger $ be a dagger space. The functor $\dR_{\mcS^\dagger}$ induces a functor
    $$
   \RigDA(\widehat{\mcS}^\dagger)\simeq \RigDA(\mcS^\dagger)\to\QCoh^\dagger(\mcS^\dagger)^{\op}.
    $$
    When restricted to constructible motives, it is monoidal and compatible with pullbacks. Moreover, the following diagram is commutative.
    $$
    \xymatrix{
    \RigDA(\mcS^\dagger) \ar[r]^{\dR_{\mcS^\dagger}}\ar@{-}[d]^{\sim} & \QCoh^\dagger(\mcS^\dagger)^{\op} \ar[d]^{\otimes^\blacksquare_{\mcS^\dagger}\widehat{\mcS}^\dagger} \\
    \RigDA(\widehat{\mcS}^\dagger) \ar[r]^{\dR_{\widehat{\mcS}^\dagger}} & \QCoh(\widehat{\mcS}^\dagger)^{\op}.
    }
    $$
\end{prop}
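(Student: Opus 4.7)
The plan is to invoke the universal characterization of $\RigDA(\mcS^\dagger)$: a presheaf on $\Aff\Sm/\mcS^\dagger$ with values in a presentable stable \icat extends to a colimit-preserving functor on $\RigDA(\mcS^\dagger)$ as soon as it satisfies \'etale hyperdescent, $\B^{1\dagger}$\nobreakdash-homotopy invariance, and $T$-stability. Since the target $\QCoh^\dagger(\mcS^\dagger)^{\op}$ is a presentable stable \icat (being a filtered colimit in $\Prlo$ of such), it suffices to verify these three axioms for $\mcX^\dagger\mapsto\dR_{\mcS^\dagger}(\mcX^\dagger)$. The equivalence $\RigDA(\mcS^\dagger)\simeq\RigDA(\widehat{\mcS}^\dagger)$ then comes for free from the earlier proposition.

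The verification of each axiom will proceed by reducing levelwise to the corresponding property of $\dR_{\mcS_h}$ established in \cite[Corollary~4.39]{LBV}. For $\B^{1\dagger}$-invariance, I would use \Cref{rmk:berk} to choose a straightening of $\mcX^\dagger\times_{\mcS^\dagger}\B^{1\dagger}$ of the form $\mcX_h\times_{\mcS_h}\B^1_{\mcS_h}\to\mcS_h$; the $\B^1$-invariance of each $\dR_{\mcS_h}$ gives equivalences at every level, and taking the filtered colimit preserves equivalences. For $T$-stability, I would similarly reduce to the fact that $\dR_{\mcS_h}(T_{\mcS_h})$ is tensor-invertible in $\QCoh(\mcS_h)$ and note that the base-change functors $-\otimes^\blacksquare_{\mcS_h}\mcS^\dagger$ preserve tensor invertibility (they are symmetric monoidal left adjoints). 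For \'etale hyperdescent, I would use the observation recalled after the dagger topology definition that any \'etale cover of $\mcX^\dagger$ admits a straightening by \'etale covers $\{\mcY_{ih}\to\mcX_h\}$; then \'etale descent of $\dR_{\mcS_h}$ and the fact that filtered colimits in $\Prlo$ commute with the finite totalizations controlling descent on qcqs objects (and extend to the general case by left Kan extension) yield the required sheaf property.

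For the monoidality and compatibility with pullbacks on constructible motives, I would argue that the transition functors $-\otimes^\blacksquare_{\mcS_h}\mcS^\dagger$ and the canonical maps $\dR_{\mcS_h}(\mcX_k)\otimes^\blacksquare_{\mcS_h}\mcS_{h'}\to\dR_{\mcS_{h'}}(\mcX_{k'})$ are symmetric monoidal, so that the colimit inherits a symmetric monoidal structure; monoidality of the assembled functor then follows from monoidality of each $\dR_{\mcS_h}$ on the constructible part (and here the restriction to constructible motives is essential since the monoidal structure on $\QCoh^\dagger(\mcS^\dagger)$ need not commute with arbitrary colimits in the second variable). Finally, the commutative square is immediate from the definitions: postcomposing $\dR_{\mcS^\dagger}$ with $-\otimes^\blacksquare_{\mcS^\dagger}\widehat{\mcS}^\dagger$ gives $\varinjlim_h\dR_{\mcS_h}(\mcX_h)\otimes^\blacksquare_{\mcS_h}\widehat{\mcS}^\dagger$, which by cofinality and the pullback compatibility of $\dR$ from \cite{LBV} is naturally equivalent to $\dR_{\widehat{\mcS}^\dagger}(\widehat{\mcX}^\dagger)$.

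The main technical obstacle will be the \'etale descent step, since one must check compatibility of the filtered colimit indexed by strict neighborhoods with the limit computing the descent. In the qcqs case this reduces to finite limits commuting with filtered colimits in $\Prlo$, but the bookkeeping required to produce a simultaneous straightening of a hypercover is delicate; here \Cref{rmk:easydescent} and the fact that \'etale covers can be refined to finite ones admitting compatible straightenings will be crucial.
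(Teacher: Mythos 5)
Your overall strategy — reduce every axiom levelwise to the corresponding property of $\dR_{\mcS_h}$ from \cite[Corollary 4.39]{LBV} and then pass to the filtered colimit — is the same one the paper uses, and the arguments for $\B^{1\dagger}$-invariance, $T$-stability, the monoidality/pullback compatibility on constructible objects, and the commutative square match the paper's Step~2 essentially verbatim.

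The gap is in the \'etale descent step, and it is exactly the point you flag as ``the main technical obstacle.'' You assert that ``filtered colimits in $\Prlo$ commute with the finite totalizations controlling descent on qcqs objects,'' but \'etale (hyper)descent is not in general a finite-limit condition: the limit over a \v{C}ech nerve is a totalization, and filtered colimits in a stable presentable category need not commute with such infinite limits. The paper handles this by decomposing \'etale descent into Nisnevich descent plus finite-\'etale descent (citing \cite[Proposition 3.6.7]{agv}). Nisnevich descent genuinely is a finite-limit statement (a Cartesian square), so commuting with filtered colimits is immediate. For finite-\'etale descent the paper shows that each term $\underline{\Omega}_{\mcY_h^n/\mcS_h}\otimes^\blacksquare_{\mcS_h}\mcS^\dagger$ of the \v{C}ech complex is concentrated in nonpositive homological degrees, and it is this coconnectivity — not finiteness of the limit — that lets $\Tot$ commute with the filtered colimit $\varinjlim_h$. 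Your proposal does not identify either the Nisnevich/finite-\'etale splitting or the coconnectivity argument, so as written the descent verification does not go through. To repair it, replace the appeal to finite totalizations with this two-case decomposition, using the refinement remark to get simultaneous straightenings of Nisnevich squares and of finite \'etale covers.
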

\begin{proof}The proof is divided into two steps.\\
    {\it Step 1:}   We first assume $\mcS^\dagger$ to be affinoid and prove that $\dR_{\mcS^\dagger}$ has \'etale descent.

    We first show that the functor $\dR_{\mcS^\dagger}$ has {analytic} descent. 
    This amounts to showing that 
    for each $\mcX^\dagger\in\Aff\Sm/\mcS^\dagger$ straightened as $\mcX_h\to\mcS_h$, 
    the functor $\dR_{\mcS^\dagger}$ sends a {Mayer--Vietoris square consisting of two jointly surjective rational open immersions $\mcU^\dagger\subseteq\mcX^\dagger, \mcV^\dagger\subseteq\mcX^\dagger$}  to a Cartesian square. 
    We may and do suppose that the {open immersions  are straightened by jointly surjective rational open immersions over each $\mcX_h$}. 
    As $\dR_{\mcS^\dagger}(\mcX^\dagger)$ is $\varinjlim \dR_{\mcS_h}(\mcX_h)\otimes_{\mcS_h}^\blacksquare \mcS^\dagger$, and each $\dR_{\mcS_h}(-)$ has {analytic} descent, we deduce the claim from the fact that base change functors and filtered colimits commute with finite (co)limits. 
        
    We now show that $\dR_{\mcS^\dagger}$ satisfies descent with respect to the \'etale topology. In light of  {analytic} descent, we may and do consider exclusively \v{C}ech hypercovers $\mcY^{\dagger\bullet}$  
    associated to a single {surjective} \'etale map between dagger affinoids $\mcY^\dagger\to \mcX^\dagger$, and show $\lim \dR_{\mcS^\dagger}(\mcY^{\dagger\bullet})\simeq \dR_{\mcS^\dagger}(\mcX^\dagger)$. {Any such  map between dagger algebras has a straightening $\mcY_h\to \mcX_h$ made of surjective  \'etale maps (see \cite[Remark 3.5]{LBV})}.

    We need to prove that $\dR_{\mcS^\dagger}(\mcX^\dagger)$ coincides with $\lim\dR_{\mcS^\dagger}(\mcY^{\dagger\bullet})$ with $\mcY^{\dagger n}$ being $\mcY^\dagger\times_{\mcX^\dagger}\cdots\times_{\mcX^\dagger}\mcY^\dagger$ ($n$ times). 
    Note that by \Cref{rmk:computeOmega0}, the latter complex can be computed as  
    $$
        \Tot \varinjlim_h \underline{\Omega}_{\mcY_h^{\bullet}/\mcS_h}
    $$
    where again $\mcY^n_h= \mcY_h\times_{\mcX_h}\cdots \times_{\mcX_h}\mcY_h$ ($n$ times) and each $\underline{\Omega}_{\mcY_h^{n}/\mcS_h}$
    is a complex of solid $\mcO(\mcS^\dagger)$-modules concentrated in negative (homological) degree { (even, concentrated in degree $[0,-\dim\mcY]$). In particular, the total complex is levelwise made of a \emph{finite} product of terms. }
    We can then commute $\Tot$ with $\varinjlim $ 
    {to get}  
    $$
        \varinjlim_h (\Tot\underline{\Omega}_{\mcY_h^{\bullet}/\mcS_h})%
    $$
    which, by \'etale descent of $\dR$, can be re-written as $\varinjlim_h \underline{\Omega}_{\mcX_h/\mcS_h}\simeq \dR_{\mcS^\dagger}(\mcX^\dagger)$, 
    as wanted.

{In the context of sheaves with rational coefficients over varieties of finite Krull dimension, there is no difference between \'etale descent and hyperdescent (see \cite[Lemma 2.4.18(2)]{agv}) and we deduce the claim for an affinoid base.} 
 \\
    {\it Step 2: }We now check the  compatibility with pullbacks along open immersions $\mcU^\dagger\to \mcS^\dagger$ of affinoid dagger spaces,
    and the compatibility with general pullbacks and tensor products when restricting to compact objects. 

    As in the previous point, this follows formally from the compatibility of the functors $\dR_{\mcS_h}(-)$ and 
    {$-\otimes^\blacksquare_{\mcS_h}\mcS^\dagger$}
    (with pullbacks and tensor products, see \cite[Corollary 4.33]{LBV}) as well as from the compatibility of filtered colimits (with pullbacks and tensor products). For example, let $j\colon \mcU^\dagger\to \mcS^\dagger$ be an open immersion, then one has
    {
    \begin{align*}
        j^*\dR_{\mcS^\dagger}(\mcX^\dagger)&\simeq j^*\varinjlim \dR_{\mcS_h}(\mcX_h)\otimes^\blacksquare_{\mcS_h}\mcS^\dagger \\
       &\simeq \varinjlim ( j_h^*\dR_{\mcS_h}(\mcX_h))\otimes^\blacksquare_{\mcS_h}\mcS^\dagger\\
       & \simeq\varinjlim \dR_{\mcU_h}(\mcX_h\times_{\mcS_h}\mcU_h)\otimes^\blacksquare_{\mcS_h}\mcS^\dagger \\& \simeq\dR_{\mcU^\dagger}(\mcX^\dagger\times_{\mcS^\dagger}\mcU^\dagger).
    \end{align*}
    }
    Moreover, in a similar fashion, one can prove that  $\dR_{\mcS^\dagger}(\T^{1\dagger}_{\mcS^\dagger})\simeq \one$ 
    {and that $\dR_{\mcS^\dagger}(\B^{1\dagger}_{\mcX^\dagger})\simeq \dR_{\mcS^\dagger}(\mcX^\dagger)$} 
    invoking \cite[Corollary 4.37(5),(6)]{LBV} {levelwise}.
    In particular, using the same techniques as in \cite[Corollary 4.39]{LBV} {(see \Cref{rmk:UP})} we can extend $\dR_{\mcS^\dagger}$
    to a functor $\RigDA(\mcS^\dagger)\to \QCoh(\mcS^\dagger)$ for an arbitrary dagger variety $\mcS^\dagger$, as well as define a natural transformation $$\RigDA((\widehat{-}))^{\otimes}_{\ct}\simeq \RigDA(-)_{\ct}^{\otimes}\to \QCoh(-)^{\op\otimes}$$ between functors defined on dagger varieties with values in monoidal $\infty$--categories.
\end{proof}

\begin{rmk}\label{rmk:computeOmega}
    Let $f\colon \mcX^\dagger \rightarrow \mcS^\dagger$ be a smooth map of dagger varieties over $K$. We can give an explicit description of $\dR_{\mcS^\dagger}(\mcX^\dagger)$ using \Cref{rmk:computeOmega0} and its local nature (see \cite[Definition 4.27, Proposition 4.28]{LBV}): it is given by the complex 
    $$
        f_*\varinjlim_h \underline{\Omega}_{\mcX_h/\mcS_0}\simeq f_*\varinjlim_h \underline{\Omega}_{\mcX_h/\mcS_h}
    $$
    where now {$\underline{\Omega}_{\mcX_h/\mcS_0}$ } %
    is considered as a complex of sheaves of $\mcO(\mcS_0)$-solid modules on {$\mcX_h$} %
    and $f_*$ is the (derived) push-forward functor from sheaves of (solid) $f^*\mcO^\dagger_{\mcS}$-modules on $\mcX_0$ to $\QCoh(\mcS^\dagger)$.
\end{rmk}

For a ringed space $X$, we will denote by $\Perf(X)$ the $\infty$-category of complexes which are locally represented by bounded complexes of finite free $\mcO_X$-modules, and by $\Vect(X)$ its full subcategory of those whose cohomology groups are also locally free. {If $\mcC$ is a symmetric monoidal {$\infty$-category,} 
we let $\mcC_{\dual}$ be its full subcategory consisting of its (strongly) dualizable objects, in the sense of e.g. \cite[Definition 4.6.1.7]{HA}.} 

\begin{cor}\label{cor:RGammadagger}
    Let $\mcS^\dagger$ be a dagger affinoid variety. There is a monoidal functor 
    $$
    \dR_{\mcS^\dagger}^{\vee}\colon \RigDA(\widehat{\mcS})_{\dual}\simeq \RigDA(\mcS^\dagger)_{\dual}\to \varinjlim\Perf(\mcS_h)\simeq\Perf(\mcS^\dagger)
    $$
    whose homology groups $H$ compute  overconvergent relative de Rham homology. These groups are (solid) finite projective $\mcO(\mcS^\dagger)$-modules for any $M\in \RigDA(\mcS^\dagger)_{\dual}$. In particular, the functor above factors as
    $$
    \RigDA(\widehat{\mcS})_{\dual}\simeq \RigDA(\mcS^\dagger)_{\dual}\to\Vect(\mcS^\dagger) \hookrightarrow \Perf(\mcS^\dagger).
    $$
\end{cor}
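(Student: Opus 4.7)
The plan is to deduce the statement from Proposition \ref{prop:dRdaggerismotivic} combined with Andreychev's classification of dualizables in $\QCoh$ and the vector-bundle refinement of \cite[Theorem 4.46]{LBV}, in three short steps.

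First, since Proposition \ref{prop:dRdaggerismotivic} asserts that $\dR_{\mcS^\dagger}$ is symmetric monoidal on constructible motives, the dual $\dR^\vee_{\mcS^\dagger}$ automatically sends dualizable motives to dualizable objects of $\QCoh^\dagger(\mcS^\dagger)$, because monoidal functors preserve dualizability. Second, I would identify $\QCoh^\dagger(\mcS^\dagger)_{\dual}$ with $\Perf(\mcS^\dagger)\simeq\varinjlim_h\Perf(\mcS_h)$: by Andreychev's classification \cite{andreychev} the dualizable objects of $\QCoh(\mcS_h)$ are exactly the perfect complexes $\Perf(\mcS_h)$, and since $\QCoh^\dagger(\mcS^\dagger)\simeq\varinjlim_h\QCoh(\mcS_h)$ is a filtered colimit in $\Prlo$ with symmetric monoidal transition functors, dualizability descends to a finite stage. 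This yields the identification of the essential image with $\Perf(\mcS^\dagger)$ and, in particular, the monoidal functor $\dR^\vee_{\mcS^\dagger}\colon\RigDA(\mcS^\dagger)_{\dual}\to\Perf(\mcS^\dagger)$ of the statement.

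Third, to upgrade from $\Perf(\mcS^\dagger)$ to $\Vect(\mcS^\dagger)$, I would invoke \cite[Theorem 4.46]{LBV}: for each rigid $\mcS_h$ this provides a factorization $\dR^\vee_{\mcS_h}\colon\RigDA(\mcS_h)_{\dual}\to\Vect(\mcS_h)$ of the rigid de Rham realization. Combined with the equivalence $\RigDA(\mcS^\dagger)\simeq\RigDA(\widehat{\mcS})$ of Proposition \ref{prop:dRdaggerismotivic} and continuity of motives along the pro-system $\{\mcS_h\}$, any $M\in\RigDA(\mcS^\dagger)_{\dual}$ is represented by some dualizable $M_h\in\RigDA(\mcS_h)_{\dual}$ whose realization $\dR^\vee_{\mcS_h}(M_h)$ lies in $\Vect(\mcS_h)$. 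Base-changing to $\mcS^\dagger$ preserves vector bundles, and $\Vect(\mcS^\dagger)\simeq\varinjlim_h\Vect(\mcS_h)$ by the pro-object definition, so the functor factors through $\Vect(\mcS^\dagger)\hookrightarrow\Perf(\mcS^\dagger)$. Since the homology groups of an object of $\Vect(\mcS^\dagger)$ are, by definition, finite projective $\mcO(\mcS^\dagger)$-modules, this also settles the finite-projectivity assertion.

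The step I expect to be the main obstacle is the spreading-out in the third paragraph: descending a dualizable motive on $\mcS^\dagger\simeq\widehat{\mcS}$ to a dualizable motive on a single $\mcS_h$ with compatible realization. This relies on exploiting compactness of dualizables together with the continuity of $\RigDA$ along the pro-system $\{\mcS_h\}$, and on citing \cite[Theorem 4.46]{LBV} in a sufficiently functorial form so that the levelwise vector-bundle outputs actually glue to the overconvergent realization $\dR^\vee_{\mcS^\dagger}$. The remaining ingredients---monoidality preserving dualizability and the colimit description of $\QCoh^\dagger(\mcS^\dagger)_{\dual}$---are largely formal once the filtered colimit description of $\QCoh^\dagger$ established in the previous subsection is in hand.
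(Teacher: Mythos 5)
Your Steps 1 and 2 (monoidality of $\dR_{\mcS^\dagger}$ plus Andreychev's identification of dualizables with $\Perf$, then passing to the filtered colimit) match the paper's argument for landing in $\Perf(\mcS^\dagger)$. The divergence is in how you upgrade $\Perf(\mcS^\dagger)$ to $\Vect(\mcS^\dagger)$, and here your route is both different and incomplete.

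The paper does not spread a dualizable $M$ out to a finite stage $\mcS_h$. Instead it uses the commutative square from \Cref{prop:dRdaggerismotivic}: $H\otimes_{\mcO(\mcS^\dagger)}\mcO(\widehat{\mcS})$ is the cohomology of $\dR^\vee_{\widehat{\mcS}}(M)$, which is projective by \cite[Theorem 4.46]{LBV} applied to the \emph{rigid} variety $\widehat{\mcS}$. Then it descends projectivity along the faithfully flat map $\mcO(\mcS^\dagger)\to\mcO(\widehat{\mcS})$, using that this map induces a bijection on maximal ideals and isomorphisms on completions (\cite[Theorem 1.7]{gk-over}) and that $\mcO(\mcS^\dagger)$ is noetherian. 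This is a purely algebraic descent argument; no continuity of $\RigDA$ is invoked directly here.

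Your proposal instead asks for a continuity statement $\RigDA(\widehat{\mcS})\simeq\varinjlim_h\RigDA(\mcS_h)$ so that $M$ descends to some $M_h\in\RigDA(\mcS_h)_{\dual}$, then applies \cite[Theorem 4.46]{LBV} at level $h$ and base-changes. You flag this spreading-out as the obstacle, and rightly so: the adic-space limit $\varprojlim_h\mcS_h$ is the \emph{closure} $\overline{\widehat{\mcS}}$ of $\widehat{\mcS}$ inside $\mcS_0$ (equivalently, the relative compactification $\widehat{\mcS}^{/K}$ of Huber), not $\widehat{\mcS}$ itself. So the continuity theorem of \cite[Theorem 2.8.14]{agv} only gives $\RigDA(\widehat{\mcS}^{/K})\simeq\varinjlim_h\RigDA(\mcS_h)$, and you would additionally need that $\RigDA$ does not see the difference between $\widehat{\mcS}$ and its compactification $\widehat{\mcS}^{/K}$. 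That fact is true (the added points do not affect the small \'etale site) and is implicitly present in \cite{LBV} — indeed, precisely this spreading-out mechanism lies \emph{inside} the proof of \cite[Theorem 4.46]{LBV}, which is why the paper can simply cite that theorem as a black box applied to $\widehat{\mcS}$. Your approach would essentially re-prove the dagger version of that theorem from scratch, with the extra burden of checking that the levelwise realizations $\dR^\vee_{\mcS_h}(M_h)$ assemble to $\dR^\vee_{\mcS^\dagger}(M)$ (which, since $\dR_{\mcS^\dagger}$ is by construction the colimit of the $\dR_{\mcS_h}\otimes^\blacksquare_{\mcS_h}\mcS^\dagger$, is true but needs to be stated). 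The paper's faithfully flat descent is cleaner: it sidesteps both the compactification subtlety and the re-derivation of continuity.
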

\begin{proof}
    Note that we can pre-compose the functor $\dR_{\mcS^\dagger}$ of
    {\Cref{prop:dRdaggerismotivic}} with the (contravariant) duality endomorphism $M\mapsto M^\vee$ giving rise to a covariant functor $\RigDA(\widehat{\mcS})_{\dual}\to \QCoh(\mcS^\dagger)$.  The first statement follows formally from the fact that $\dR_{\mcS^\dagger}$ is monoidal (on compact objects)  and that dualizable objects in $\QCoh(\mcS_h)$ form a category which is canonically equivalent to $\Perf(\mcS_h)$ as shown in \cite[Theorem 5.9 and Corollary 5.51.1]{andreychev}. %

    In order to prove that each homology  group $H$ is actually a projective $\mcO(\mcS^\dagger)$-module, it suffices to check that $H\otimes_{\mcO(\mcS^\dagger)}{\mcO}_{\mathfrak{m}}$ is free, for any maximal ideal  $\mathfrak{m}$ of $\mcO(\mcS^\dagger)$. We may even check this on the base change $H\otimes_{\mcO(\mcS^\dagger)}\widehat{\mcO}_{\mathfrak{m}}$ to the $\mfm$-adic completion of $\mcO_{\mathfrak{m}}$ since $\mcO(\mcS^\dagger)$ is noetherian.  
    As the map $\mcO(\mcS^\dagger)\to\mcO(\widehat{\mcS})$ is faithfully flat, it induces a bijection on maximal ideals and isomorphisms between the associated  completions (see \cite[Theorem 1.7]{gk-over}) and we might just as well prove that ${H}\otimes_{\mcO(\mcS^\dagger)}\mcO(\widehat{\mcS})$ is a projective $\mcO(\widehat{\mcS})$-module, and this follows from \cite[Theorem 4.46]{LBV}.
\end{proof}

Inspired and motivated by the theory of overconvergent isocrystals, we  now want to fix a  (non overconvergent!) rigid analytic variety $\mcS$ and consider all its possible (local) overconvergent enhancements at once. 

\begin{dfn}
  Let $\mcS$ be an adic space over a  $K$. 
  We set $\Ad^\dagger_{/\mcS}$ to be the category $(\mcT^\dagger,f)$ of  dagger varieties $\mcT^\dagger$ equipped with a map $f\colon \widehat{\mcT}^\dagger\to \mcS$.
\end{dfn}

\begin{rmk}\label{rmk:fun0}
    The category $\Ad^\dagger_{/\mcS}$ is obviously functorial in $\mcS$. In particular, for each map $g\colon \mcS'\to \mcS$ {of} rigid analytic varieties over $K$, we deduce a functor of $\infty$-categories
    $$
        \lim_{\Ad^{\dagger{\op}}_{/\mcS}}\RigDA(\mcT^\dagger)\to \lim_{\Ad^{\dagger{\op}}_{/\mcS'}}\RigDA(\mcT'^\dagger) 
    $$
    such that its realization at each $(\mcT'^\dagger,f)$
    $$
        \lim_{\Ad^{\dagger{\op}}_{/\mcS}}\RigDA(\mcT^\dagger)\to \RigDA({\mcT'^\dagger}) 
    $$
    is given by the natural projection relative to the object $(\mcT'^\dagger,g\circ f)$ in $\Ad_{/\mcS}^\dagger$.
\end{rmk}

\begin{rmk}
    If $\mcS$ is a proper variety over $K$, then $\mcS\Subset \mcS$ so that $\mcS$ has a canonical dagger structure, which is the terminal object of $\Ad^\dagger_{/\mcS}$.
\end{rmk}

\begin{cor}\label{cor:deRhamrealisationlimit1}
    Let $\mcS$ be a rigid analytic variety over a non-archimedean field $K$ of characteristic zero. 
    \begin{enumerate}
    \item 
        The de Rham realization functor
        $$
            \dR_{\mcS}\colon \RigDA(\mcS)_{\ct}\to \QCoh(\mcS)^{\op}
        $$
        can be enriched to a functor
        $$
            \dR^\dagger\colon\RigDA(\mcS)_{\ct}\to \lim_{\mcT^\dagger\in\Ad^{\dagger{\op}}_{/\mcS}}\QCoh^{\dagger}(\mcT^{\dagger})^{\op}.
        $$ 
        It is compatible with pullbacks and tensor products.
    \item
        The homological de Rham realization functor on dualizable objects
        $$
            {\dR^{\vee}_\mcS}\colon   \RigDA(\mcS)_{\dual}\to\Vect(\mcS)
        $$
        can be enriched to a functor
        $$
            \dR^{\dagger\vee}\colon \RigDA(\mcS)_{\dual}\to \lim_{\mcT^\dagger\in\Ad^{\dagger{\op}}_{/\mcS}}\Vect(\mcT^\dagger).
        $$
        It is compatible with pullbacks and tensor products.
    \end{enumerate}
\end{cor}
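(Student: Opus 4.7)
The plan is to bootstrap from \Cref{prop:dRdaggerismotivic}, which already supplies the key natural transformation
$$
\dR_{(-)}\colon \RigDA((\widehat{-}))_{\ct}^{\otimes}\simeq \RigDA(-)_{\ct}^{\otimes}\to \QCoh^{\dagger}(-)^{\op\otimes}
$$
of functors on $\Ad^\dagger$ with values in monoidal $\infty$-categories, compatible with pullbacks on compact objects. Fixing a rigid analytic variety $\mcS$ over $K$, the category $\Ad^\dagger_{/\mcS}$ maps to $\Ad^\dagger$ by forgetting the structure map $f\colon \widehat{\mcT}^\dagger\to \mcS$, so the natural transformation above restricts to a natural transformation of functors $(\Ad^\dagger_{/\mcS})^{\op}\to \CAlg(\Cat_\infty)$.

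First, I would build the source map $\RigDA(\mcS)_{\ct}\to \lim_{(\mcT^\dagger,f)\in\Ad^\dagger_{/\mcS}}\RigDA(\mcT^\dagger)_{\ct}$. This is a general consequence of the fact that $\mcT'\mapsto \RigDA(\mcT')_{\ct}$ is a presheaf of (monoidal, stable) $\infty$-categories on rigid analytic varieties over $K$ and that pullbacks preserve constructibility: the collection of pullback functors $f^*\colon \RigDA(\mcS)_{\ct}\to \RigDA(\widehat{\mcT}^\dagger)_{\ct}\simeq \RigDA(\mcT^\dagger)_{\ct}$ forms a coherent cone over the diagram $(\mcT^\dagger,f)\mapsto \RigDA(\mcT^\dagger)_{\ct}$ (essentially as in \Cref{rmk:fun0}), and the universal property of the limit produces the desired functor. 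Composing with the limit of the natural transformation $\dR_{(-)}$ applied over $\Ad^\dagger_{/\mcS}$ then yields the first functor
$$
\dR^\dagger\colon \RigDA(\mcS)_{\ct}\to \lim_{\mcT^\dagger\in\Ad^\dagger_{/\mcS}}\QCoh^{\dagger}(\mcT^\dagger)^{\op}.
$$
Compatibility with pullbacks (along a map $g\colon \mcS'\to \mcS$) and with tensor products follows at once: both sides are functors of $\mcS$, the source by pullback of motives and the target by the induced projection $\Ad^\dagger_{/\mcS'}\to \Ad^\dagger_{/\mcS}$ described in \Cref{rmk:fun0}, and the compatibility is already encoded levelwise by \Cref{prop:dRdaggerismotivic}.

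For part~(2), one first dualizes to obtain a covariant functor on $\RigDA(\mcS)_{\dual}\subset \RigDA(\mcS)_{\ct}$. Since $\dR^\dagger$ is monoidal on compact objects, it sends dualizable objects to dualizable objects in each $\QCoh^\dagger(\mcT^\dagger)$. By the proof of \Cref{cor:RGammadagger}, dualizable objects in $\QCoh^\dagger(\mcT^\dagger)$ coincide with $\Perf(\mcT^\dagger)$, and moreover the image of a dualizable motive consists of vector bundles because its homology is finite projective over $\mcO(\mcT^\dagger)$. One then restricts the target along the limit of the inclusions $\Vect(\mcT^\dagger)\hookrightarrow \Perf(\mcT^\dagger)\hookrightarrow \QCoh^\dagger(\mcT^\dagger)^{\op}$, which gives the enriched homological functor $\dR^{\dagger\vee}$.

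The only step that requires a bit of care is the coherent assembly into the limit: one must verify that the naturality of $\dR_{(-)}$ in $\mcT^\dagger$ from \Cref{prop:dRdaggerismotivic}, combined with the pullback compatibility $f^*\dR_{\mcS^\dagger}\simeq \dR_{\mcT^\dagger}f^*$, gives a well-defined morphism of diagrams $(\Ad^\dagger_{/\mcS})^{\op}\to \Cat_\infty$, not just a pointwise collection of equivalences. This is essentially formal once the levelwise statements are in place, because all the compatibilities needed are precisely those supplied by \Cref{prop:dRdaggerismotivic}; the rest is a universal-property argument for limits in $\Cat_\infty$.
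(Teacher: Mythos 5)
Your proof is correct and takes essentially the same route as the paper: assemble the pullback functors $f^*\colon\RigDA(\mcS)_{\ct}\to\RigDA(\mcT^\dagger)_{\ct}$ into a cone over $\Ad^\dagger_{/\mcS}$ (the coherence coming from $\Cref{rmk:fun0}$), compose with the natural transformation $\RigDA(-)_{\ct}^\otimes\to\QCoh^\dagger(-)^{\op\otimes}$ supplied by $\Cref{prop:dRdaggerismotivic}$, and handle part (2) by passing through $\Cref{cor:RGammadagger}$. The only blemish is the chain $\Vect(\mcT^\dagger)\hookrightarrow\Perf(\mcT^\dagger)\hookrightarrow\QCoh^\dagger(\mcT^\dagger)^{\op}$ at the end: after dualizing the functor is covariant, so the inclusion should land in $\QCoh^\dagger(\mcT^\dagger)$ rather than its opposite, but this is a notational slip that does not affect the argument.
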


\begin{proof}
{
The natural transformation $\dR^\dagger\colon\RigDA(-)_{\ct}\to\QCoh^\dagger(-)^{\op}$ of \Cref{prop:dRdaggerismotivic} induces a natural functor $\lim_{\Ad^{\dagger{\op}}_{/\mcS}}\RigDA(\mcT^\dagger)_{\ct}\to\lim_{\Ad^{\dagger{\op}}_{/\mcS}} \QCoh^\dagger(\mcT^\dagger)^{\op}$ that we can pre-compose with the natural functor $$\RigDA(\mcS)_{\ct}\simeq\lim_{\Ad^{\op}_{/\mcS}}\RigDA(\mcT)\to \lim_{\Ad^{\dagger{\op}}_{/\mcS}}\RigDA(\widehat{\mcT}^\dagger)_{\ct}\simeq \lim_{\Ad^{\dagger{\op}}_{/\mcS}}\RigDA({\mcT}^\dagger)_{\ct}$$ induced by the functoriality of $\RigDA(-)$ {(see Remark \ref{rmk:UP})} and the equivalence $\RigDA(\mcT^\dagger)\simeq\RigDA(\widehat{\mcT}^\dagger)$. In particular, 
}
for 
 any map {$\rho\colon(\mcT'^\dagger, f)\to (\mcT^\dagger, f')$} %
    in $\Ad^\dagger_{/\mcS}$ there is an essentially commutative diagram:
    $$
        \xymatrix@R=4mm{
        &\RigDA(\mcT^\dagger) \ar [dd]^{\rho^*} \\
        \RigDA(\mcS)\ar[dr]^{f'^*}\ar[ur]^{f^*}\\
        &\RigDA(\mcT'^\dagger) 
        }
    $$
Note that all the functors written above are monoidal, and that for any 
$(\mcT'^\dagger,f')\in\Ad_{/\mcS'}^{\dagger}$
and any $g\colon \mcS'\to \mcS$ the realization $\RigDA(\mcS)\to \RigDA(\mcT'^\dagger)$ is obtained by considering $g\mcT'^\dagger\colonequals (\mcT'^\dagger,g\circ f')$ as an object of $\Ad_{/\mcS}^{\dagger}$ and 
coincides with the composition $\RigDA(\mcS)\to\RigDA(\mcS')\to\RigDA(\mcT'^\dagger)^{}$. This proves the compatibility with pull-back functors (see \Cref{rmk:fun0}). The other properties are easily deduced from \Cref{cor:RGammadagger}. 
\end{proof}
\begin{rmk}\label{rmk:thomason}
    Both functors $\RigDA(-)_{\ct}$ and $\lim_{\Ad^{\dagger{\op}}_{/-}}\QCoh^{\dagger}(\mcT^{\dagger})$ are analytic sheaves. The former is by definition the canonical sheaf extension of the functor $A\mapsto \RigDA(A)_{\omega}$ on affinoid rigid varieties. For the latter: given an analytic \v{C}ech hypercover $\mcU\to \mcS$,  by analytic descent of $\QCoh^\dagger$   we may and do restrict to the full subcategory $\Ad^{\dagger}_{/\mcU}$ of $\Ad^{\dagger}_{/\mcS}$ in the formula defining it. This category is fibered over the \v{C}ech hypercover $\mcU$ and hence
    $$
        \lim_{\mcT^\dagger\in\Ad^{\dagger\op}_{/\mcU}}\QCoh^{\dagger}(\mcT^{\dagger})\simeq\lim_{\mcU}\lim_{\mcT^{\dagger}\in\Ad^{\dagger\op}_{/\mcU_I}}\QCoh^{\dagger}(\mcT^{\dagger}).
    $$
    The same holds for the subsheaves $\RigDA_{\dual}$ and $\lim_{\mcT^\dagger\in\Ad^{\dagger\op}_{/-}}\Vect(\mcT^\dagger)$.
\end{rmk}

We give some important examples of dualizable motives.

\begin{rmk}\label{rmk:propersmooth}
    Let $f\colon \mcX\to \mcS$ be a proper and smooth morphism of rigid analytic varieties over $K$. The motive $f_*\one$ is dualizable in $\RigDA(\mcS)$. More generally, whenever $M\in\RigDA(\mcX)$ is dualizable, then $f_*M$ is dualizable, with dual $f_\sharp M^\vee$. This can be seen as in \cite[Corollary 4.1.8]{agv} (see also \cite[Lemma 2.7]{ayoub-murre}). Indeed by the smooth projection formula one gets
    $$
    f_\sharp M^\vee\otimes - \simeq f_\sharp(M^\vee\otimes f^*-).
    $$
    This implies that the right adjoint functor $\uhom(f_\sharp M^\vee,-)$ coincides with $$f_*(M\otimes f^*-)\simeq f_*M\otimes -$$
    where the equivalence follows from the proper projection formula, hence the claim.
\end{rmk}

\section{Relative overconvergent rigid cohomology}\label{sec:b}

We now define the functor $\dR_{\rig}$ from the introduction. The approach we will take is reminiscent of the one of Le Stum \cite{lestum1,lestum2} via the overconvergent site, and the one of  Zhang \cite{zhang} via the Monsky--Washnitzer site.

\subsection{An overconvergent Monsky--Washnitzer realization}

From now on, we fix a prime $p$ and a perfect field $k$ over $\F_p$. 
All our formal schemes are assumed to be adic of finite ideal type, endowed with a (not necessarily adic) map of formal schemes to $\Spf W(k)$. For simplicity, we also assume that their generic fiber over $\Q_p$   is a rigid analytic variety over $K=W(k)[1/p]$ and that their reduced special fiber is locally of finite type over $k$. Note that such formal schemes are closed under products and completions along closed immersions which are locally of finite type.

\begin{notn}
    For a 
    formal scheme $\mfS$ we denote by $\mfS_\sigma$ its reduced special fiber and by $\mfS_{\Q_p}$ its (rigid analytic) generic fiber over $\Q_p$. This can be considered as either the fiber product $\mfS\times_{\Spa(\Z_p)}{\Spa\Q_p}$ in adic spaces,  or {as} 
    the locus $\{|p|\neq0\}$ in the {analytic adic space $\mfS^{\rig}$} attached to $\mfS$ {(under Raynaud's approach to rigid geometry, see \cite{fujiwara-kato})}.
\end{notn}

\begin{rmk}
    {In case $\mfS$ is adic over $W(k)$, then the analytic space $\mfS_{\Q_p}$ coincides with the (adic) generic fiber $\mfS^{\rig}$,} but in general they might differ: e.g. if $\mfS=\Spf W(k)[[T]]$ endowed with the $(p,T)$-adic topology, then $\mfS_{\Q_p}$ is the open locus of $\mfS^{\rig}$ defined by $p\neq 0$, isomorphic to the open unit disc over $K$. Its closed complement consists of the point $\Spa k(\!(T)\!)$. 
\end{rmk}

\begin{dfn}
    We let $\Ad^\dagger$ 
    be the category whose objects are pairs $\mcS=(\mfS,\mfS_{\Q_p}^\dagger)$ where $\mfS$ is a formal scheme over $W(k)$ and $\mfS_{\Q_p}^\dagger$ is a dagger structure on the associated rigid variety, represented by a strict inclusion $\mfS_{\Q_p}^\dagger=[(\mfS_{\Q_p} \Subset \mcS_0)]$. 

    Let $S$ be an algebraic variety over $k$. We let $\Ad^\dagger_{/S}$  be the category given by pairs $((\mfS,\mfS_{\Q_p}^\dagger),f\colon \mfS_\sigma\to S_{\red})$ where the first element lies  in  $\Ad^\dagger$  and the map $f$ is a  map of schemes over $k$.
\end{dfn}

\begin{rmk}\label{rmk:fun}
    As in \Cref{rmk:fun0}, we {observe} 
    that the category $\Ad^\dagger_{/S}$ is obviously functorial in $S$ so that, for each map $g\colon S'\to S$ of schemes over $k$, we deduce the existence of a functor of $\infty$-categories
    $$
    \lim_{\Ad^{\dagger\op}_{/S}}\RigDA(\mfT_{\Q_p}^\dagger)\to \lim_{\Ad^{\dagger\op}_{/S'}}\RigDA(\mfT'^\dagger_{\Q_p}) 
    $$
    described as follows: for any $(\mfT',\mfT'^\dagger_{\Q_p},f)$ in $\Ad^\dagger_{/S'}$, its realization
    $$
    \lim_{\Ad^{\dagger\op}_{/S}}\RigDA(\mfT_{\Q_p}^\dagger)\to \RigDA(\mfT'^\dagger_{\Q_p}) 
    $$
    is given by the natural projection relative to the object $(\mfT',\mfT'^\dagger_{\Q_p},g\circ f)$ in $\Ad_{/S}^\dagger$.
\end{rmk}

The category $\Ad^\dagger_{/S}$  is a slight generalization of the category of (proper) frames over $S$ as shown here below.

\begin{exm}\label{example: frames}
    Let $X\to S$ be a morphism of varieties over $k$ 
    and let $X\stackrel{\circ}{\subset} \overline{X}\subset \mfP$ be a proper $W(k)$-frame of $X$, 
    that is an open immersion into a proper closed subscheme $\overline{X}$ of a formal scheme $\mfP$ over $W(k)$. 
    We may consider the formal scheme $\mfP^{\wedge \overline{X}}$ obtained by taking the  completion of $\mfP$ at $\overline{X}$ 
    \cite[Definition 4.9]{lestum1}. 
    It has $|\overline{X}|$ as underlying topological space so that $X$ lifts canonically to an open subscheme  $\mfX$ of $\mfP^{\wedge \overline{X}}$. 
    Moreover, as $\overline{X}$ is proper, we deduce that $\mfP^{\wedge \overline{X}}$ 
    is partially proper \cite[Definition 1.8]{lestum1}. 
    Hence, the associated adic space \cite[Proposition 3.2]{lestum1} 
    is analytically partially proper by \cite[Theorem 3.17]{lestum1}. 
    In particular, the rigid analytic variety  
    $(\mfP^{\wedge \overline{X}})_{\Q_p}$  is partially proper as well.
    Therefore, it comes equipped with a canonical dagger structure \cite[Theorem 2.27]{GrosseKloenne2000}, which we can restrict to the open (rigid analytic) subvariety  $\mfX_{\Q_p}\subset (\mfP^{\wedge \overline{X}})_{\Q_p}$ giving rise to an object $(\mfX,\mfX_{\Q_p}^\dagger) $ in $\Ad^\dagger_{/S}$. 
    Note that $(\mfP^{\wedge \overline{X}})_{\Q_p}$ [resp.\ $\mfX_{\Q_p}$] coincides with  the tube  
    $]\overline{X}[_{\mfP}:=]\overline{X}[_{\mfP_{\Q_p}}$ [resp.\  $]X[_{\mfP}:=]X[_{\mfP_{\Q_p}}$],  see \cite[Definition 4.16]{lestum1}, 
    so the dagger structure $\mfX_{\Q_p}^\dagger$ is given by 
    $[\,]X[_{\mfP}\Subset ]\overline{X}[_{\mfP}]$.
\end{exm}

\begin{rmk}
    The category $\Ad^\dagger_{/S}$ has   finite non-empty products. For any two $(\mfT,\mfT_{\Q_p}^\dagger,f\colon{\mfT_\sigma}\to S)$ and $(\mfT',\mfT'^\dagger_{\Q_p},f'\colon\mfT'_\sigma\to S)$ in $ \Ad^\dagger_{/S}$ one can consider the formal scheme $\mfZ$ obtained as a completion of $(\mfT\times\mfT')$ at $\mfT_\sigma\times_S\mfT'_\sigma$, and equip its generic fiber with the dagger structure  induced by the one of $\mfT_{\Q_p}\times\mfT'_{\Q_p}$ in which it is included as an open subvariety. 
\end{rmk}

 {
\begin{dfn}
    Let {$S$} 
    be a scheme over $k$. 
    We let $\DA(S)=\SH_{\et}(S,\Q)$ be the category of \'etale motives over $S$ with rational coefficients as defined in \cite{AyoubEt}, that is, the category $\DA^{\eff}(S)[T^{-1}]$ where $\DA^{\eff}(S)$ is the full subcategory of $\A^{1}$-invariant \'etale sheaves on $\Sm/S$ inside $\Sh_{\et}(\Sm/S,\Q)$, and $T$ 
    is the $\B^{1\dagger}$-invariant cofiber of the split inclusion  
 induced on the associated representable sheaves by the unit section  $S\to \G_{m,S}$.
 \end{dfn}

\begin{rmk}
    The functor $S\mapsto\DA(S)$ is insensitive to universal homeomorphisms (see \cite[Theorem 2.9.7]{agv}). In particular, there is a canonical equivalence $\DA(S)\simeq \DA(S_{\red})$. 
\end{rmk}
 }

\begin{prop}\label{prop:MW}
    There is a realization functor
    $$
        \MW\colon  \DA(S)\to \lim_{\Ad^{\dagger\op}_{/S}}\RigDA(\mfT_{\Q_p}^\dagger)
    $$
    such that, for each $(\mfT,\mfT_{\Q_p}^\dagger,f)$ in 
    $\Ad_{/S}^{\dagger}$ the induced functor
    $$
        \DA(S)\simeq\DA(S_{\red})\to 
        \RigDA(\mfT_{\Q_p}^\dagger)
    $$
    coincides with the pullback $\DA(S_{\red})\to\DA(\mfT_\sigma)$ followed by the Monsky--Washnitzer realization of \cite[Definition\,5.3 and Remark\,5.4]{vezz-MW}. The functor $\MW$ is compatible with pullbacks and tensor products and restricts to a functor
    $$
        \DA(S)_{\ct}\to \lim_{\Ad^{\dagger\op}_{/S}}\RigDA(\mfT_{\Q_p}^\dagger)_{\ct}.
    $$
\end{prop}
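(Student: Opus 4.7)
The plan is to build the functor $\MW$ coordinate-wise and then invoke the universal property of the limit. The raw material is, on the one hand, the six-functor formalism of $\DA$ (in particular nil-invariance $\DA(S) \simeq \DA(S_{\red})$ and pullback), and on the other hand, the Monsky--Washnitzer realization of \cite[Definition 5.3, Remark 5.4]{vezz-MW}, which I will regard as a natural transformation of presheaves of symmetric monoidal $\infty$-categories
$$
\MW_0 \colon \DA((-)_\sigma) \Rightarrow \RigDA((-)^\dagger_{\Q_p})
$$
defined on the category $\Ad^\dagger$ of pairs $(\mfT,\mfT_{\Q_p}^\dagger)$. The forgetful functor $u\colon \Ad^\dagger_{/S}\to \Ad^\dagger$ gives, by precomposition, a natural transformation $u^*\MW_0$ of presheaves on $\Ad^\dagger_{/S}$.

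For the first step, I would compose $u^*\MW_0$ with the natural transformation from the constant presheaf $\underline{\DA(S)}$ to $\DA((-)_\sigma) \circ u$ given objectwise by the pullback along $f\colon \mfT_\sigma \to S_{\red}$ through the equivalence $\DA(S)\simeq\DA(S_{\red})$. Functoriality in $\Ad^\dagger_{/S}$ is automatic because pullback in $\DA$ is itself functorial, and for any morphism $(\mfT', \mfT'^\dagger_{\Q_p}, f') \to (\mfT, \mfT^\dagger_{\Q_p}, f)$ over $S$ the triangle of special fibres commutes, so pullbacks stack coherently. The composition yields a transformation from the constant presheaf $\underline{\DA(S)}$ to $(\mfT,\mfT^\dagger_{\Q_p},f)\mapsto \RigDA(\mfT^\dagger_{\Q_p})$, and by the universal property of limits this is the same datum as a functor $\MW\colon \DA(S)\to \lim_{\Ad^\dagger_{/S}}\RigDA(\mfT^\dagger_{\Q_p})$ whose component at each frame is the prescribed composition $\MW_0 \circ f^*$.

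The monoidality and pullback compatibility both reduce to the corresponding properties of the building blocks: pullback $f^*$ in $\DA$ is symmetric monoidal and compatible with further pullbacks, and $\MW_0$ is monoidal and pullback-compatible by its construction in \cite{vezz-MW}. These compatibilities, which hold objectwise on the limit, lift to the limit because limits of symmetric monoidal $\infty$-categories are computed objectwise. For the compatibility with base change along $g\colon S' \to S$, as recalled in \Cref{rmk:fun}, the induced map $\Ad^\dagger_{/S'}\to \Ad^\dagger_{/S}$ intertwines the two pointwise realizations since $(g\circ f')_{\red}^*\simeq f'^* g^*$, so both ways of composing produce the same functor into $\RigDA(\mfT'^\dagger_{\Q_p})$. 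Preservation of constructibility follows because $f^*$ preserves compact objects in étale motives and $\MW_0$ does so by \cite{vezz-MW}; since the inclusion of compact objects is compatible with filtered and limits in the relevant sub-$\infty$-category of $\Prlo$, one obtains the restricted functor $\DA(S)_{\ct} \to \lim \RigDA(\mfT^\dagger_{\Q_p})_{\ct}$.

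The main obstacle is strictly $\infty$-categorical book-keeping: one must make sure $\MW_0$ is given as a bona fide natural transformation of presheaves of symmetric monoidal $\infty$-categories on $\Ad^\dagger$, not merely a family of functors indexed by objects. Fortunately this is precisely the form in which it is constructed in \cite{vezz-MW}, so the argument reduces to assembling known functorialities and invoking the universal property of the limit — no new coherence needs to be checked by hand.
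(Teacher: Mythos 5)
Your overall strategy --- assemble the pointwise realizations $\mfT\mapsto\mathrm{MW}_0\circ f^*$ into a cone over the diagram $\Ad^\dagger_{/S}\to\iCat$ and factor through the limit --- matches the paper's proof, including the use of the ``essentially commutative'' triangle for each morphism of $\Ad^\dagger_{/S}$ and the observation that changing the base $S$ is implemented by reindexing objects via $g\circ f'$. The monoidality and pullback-compatibility arguments likewise go through in the same way.

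The one place where you wave where the paper works is the claim that the Monsky--Washnitzer realization preserves constructible (compact) motives: you attribute this directly to \cite{vezz-MW}, but the paper does \emph{not} cite it --- it proves it. The issue is that after the nil-invariance $\DA(\mfT_\sigma)\simeq\FDA(\mfT)$, one must still show that the generic-fiber functor $\xi\colon\FDA(\mfT)\to\RigDA(\mfT_{\Q_p})$ sends compact objects to compact objects, and this is not automatic: $\mfT_{\Q_p}$ may fail to be quasi-compact even when $\mfT$ is. The paper handles this by a Zariski reduction: constructibility can be tested on a cover, so one restricts to an affinoid open $\mfU_{\Q_p}\subset\mfS_{\Q_p}$ with a formal model $\mfU$, replaces $\mfS$ by $\mfU$, and then observes that over an affinoid adic $\Spf A$ the generic fiber functor sends smooth qcqs formal schemes to smooth qcqs rigid spaces, hence preserves the generators $\Q(\mcX)(n)$ and therefore compact objects. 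You should include this reduction, or at minimum locate the precise statement in \cite{vezz-MW} that covers the relative (non-affinoid base) case; as written, the citation does not obviously bear the weight you put on it.

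A minor stylistic point: your remark that ``the inclusion of compact objects is compatible with filtered and limits'' is garbled and not quite what is needed. The clean formulation is simply that, once each component $\DA(S)_{\ct}\to\RigDA(\mfT^\dagger_{\Q_p})$ factors through $\RigDA(\mfT^\dagger_{\Q_p})_{\ct}$ and the transition functors preserve compacts, the induced functor into the limit automatically factors through $\lim\RigDA(\mfT^\dagger_{\Q_p})_{\ct}$; no claim about compacts of the limit category is required or used.
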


\begin{proof}
{Let $\FSch/S$ be the category of formal schemes $\mfT$ equipped with a map $\mfT_\sigma\to S_{\red}$. By the functoriality of $\DA(-)$, the (monoidal) equivalence $\FDA(-)\simeq\DA((-)_\sigma)$ and the monoidality of $\FDA(-)\to\RigDA((-)_{\Q_p})$, we have functors in $\Prloo$:
$$\DA(S)\to \lim_{(\FSch/S)^{\op}}\DA(\mfT_\sigma)\simeq \lim_{(\FSch/S)^{\op}}\FDA(\mfT)\to \lim_{(\FSch/S)^{\op}}\RigDA(\mfT_{\Q_p})$$
which levelwise compute the functor of \cite[Remark 5.4]{vezz-MW}. 
By \Cref{cor:deRhamrealisationlimit1} we  can prolong the functor above to a functor 
\begin{align*}
\DA(S) &\to \lim_{(\FSch/S)^{\op}}\RigDA(\mfT_{\Q_p})\to \lim_{\mcT\in(\FSch/S)^{\op}, \mcV^\dagger\in \Ad^{\dagger\op}_{/\mcT_{\Q_p}}}\RigDA(\mcV^\dagger) \\
& \simeq\lim_{\Ad^{\dagger\op}_{/S}}\RigDA(\mfT_{\Q_p}^\dagger) 
\end{align*}
as wanted.

More explicitly,}  for any map $\rho\colon(\mfT',\mfT'^\dagger_{\Q_p},f')\to (\mfT,\mfT^\dagger_{\Q_p},f)$ there is an essentially commutative diagram {of monoidal functors:}
    $$
        \xymatrix@R=4mm{
        &\DA(\mfT_{\sigma})\simeq \FDA(\mfT)\ar@<4ex>[dd]^{\rho^*}\ar[r]^-{\xi} & \RigDA(\mfT_{\Q_p})\simeq\RigDA(\mfT^\dagger_{\Q_p})^{}\ar@<6ex>[dd]^{\rho^*}\\
        \DA(S)\ar[ur]^-{f^*}\ar[dr]^-{f'^*}\\
        &\DA(\mfT'_{\sigma})^{}\simeq \FDA(\mfT')^{}\ar[r]^-{\xi} & \RigDA(\mfT'_{\Q_p})^{}\simeq\RigDA(\mfT'^\dagger_{\Q_p})^{}
        }
    $$
    Note that 
    for any 
    $\mcT'=(\mfT',\mfT_{\Q_p}'^\dagger,f')\in\Ad_{/S'}^{\dagger}$
    and any $g\colon S'\to S$ the realization $\DA(S)\to \RigDA(\mfT_{\Q_p}'^\dagger)^{}$ obtained by considering $g\mcT'\colonequals (\mfT',\mfT_{\Q_p}'^\dagger,g\circ f')$ as an object of $\Ad_{/S}^{\dagger}$ 
    coincides with the composition $\DA(S)\to\DA(S')\to\RigDA(\mfT_{\Q_p}'^\dagger)^{}$. This proves the second claim (see \Cref{rmk:fun}).

    In order to show the preservation  of constructible motives, it suffices to show that each Monsky--Washnitzer realization functor
    $$
        \DA(S)\to \DA(\mfS_\sigma)\cong\FDA(\mfS)\stackrel{\xi}{\to} \RigDA(\mfS_{\Q_p})\cong \RigDA(\mfS_{\Q_p}^\dagger)
    $$
    sends constructible motives to constructible motives. As pullback functors do, we are left to show that the functor $\xi$ does as well. To this aim, we can assume $\mfS$ is affine,  fix an affinoid open subspace $j\colon \mfU_{\Q_p}\subset\mfS_{\Q_p}$ having a model $\mfU\to \mfS$ and show that the restriction to $\RigDA(\mfU_{\Q_p})$ of the image $\xi M$ of a compact motive $M\in\FDA(\mfS)$ is constructible. We can then replace $\mfS$ with $\mfU$ and hence assume that $\mfS$ is $\Spf A$ with $A$  adic over $\Z_p$. It is now clear that the generic fiber functor sends smooth qcqs maps over $\mfS$ to smooth and qcqs maps over 
    $\Spa(A[1/p],A)=\mfS_{\Q_p}$, and hence compact motives to compact motives, as claimed.
\end{proof}

\subsection{The overconvergent motivic realization}

We can merge the results of \Cref{sec:oc} and of the previous paragraph obtaining a relative motivic rigid realization for schemes over $k$.

\begin{prop}\label{prop:Berth-real}
    There are realization functors
    $$
        \dR_{\rig}\colon  \DA(S)_{\ct}\to \lim_{\Ad^{\dagger\op}_{/S}}\QCoh^{\dagger}(\mfT_{\Q_p}^\dagger)^{\op}
    $$
    $$
        \dR_{\rig}^\vee\colon  \DA(S)_{\dual}\to \lim_{\Ad^{\dagger\op}_{/S}}\Vect(\mfT_{\Q_p}^\dagger)^{}
    $$
    such that, for each  $(\mfT,\mfT_{\Q_p}^\dagger,f)$ in $\Ad_{/S}^{\dagger}$ 
    and each smooth map $X\to S$ whose pullback $X\times_S\mfT_\sigma\to \mfT_\sigma$ admits a model 
    $(\mfY,\mfY^\dagger_{\Q_p},g)\to (\mfT,\mfT_{\Q_p}^\dagger,f)$, then   
    the induced functor
    $$
        \dR_{\rig}(-/\mfT_{\Q_p}^\dagger)\colon \DA(S)_{\ct}\to \QCoh^{\dagger}(\mfT_{\Q_p}^\dagger)^{\op}
    $$
    maps $\Q_S(\mfX_\sigma)$ to {$\dR_{\mfT_{\Q_p}^\dagger}(\mfY_{\Q_p}^\dagger)$}. 
They are   compatible with pullbacks and tensor products.
\end{prop}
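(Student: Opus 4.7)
The plan is to obtain $\dR_{\rig}$ by composing the Monsky--Washnitzer realization of \Cref{prop:MW} with the dagger-level de Rham realization of \Cref{prop:dRdaggerismotivic}, and to promote it to $\dR_{\rig}^\vee$ on dualizable motives via \Cref{cor:RGammadagger}. Concretely, for each object $\mcT = (\mfT, \mfT_{\Q_p}^\dagger, f)$ in $\Ad^\dagger_{/S}$ I would define the realization at $\mcT$ as the composition
$$
\DA(S)_{\ct} \xrightarrow{\MW_{\mcT}} \RigDA(\mfT_{\Q_p}^\dagger)_{\ct} \xrightarrow{\dR_{\mfT_{\Q_p}^\dagger}} \QCoh^\dagger(\mfT_{\Q_p}^\dagger)^{\op},
$$
where $\MW_{\mcT}$ is the $\mcT$-component of the functor $\MW$ of \Cref{prop:MW}.

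The next step is to show that these componentwise compositions assemble into a single functor with values in $\lim_{\Ad^\dagger_{/S}} \QCoh^\dagger(\mfT_{\Q_p}^\dagger)^{\op}$. For any morphism $\mcT' \to \mcT$ in $\Ad^\dagger_{/S}$, compatibility with the relevant pullback functors already holds for $\MW$ by \Cref{prop:MW} and for $\dR_{\mfT_{\Q_p}^\dagger}$ by \Cref{prop:dRdaggerismotivic}, so the assembly is automatic. The monoidality of $\dR_{\rig}$ on constructible objects and its compatibility with pullbacks along $g\colon S' \to S$ then descend from the analogous properties of each of the two factors (together with the functoriality spelled out in \Cref{rmk:fun}).

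For the dualizable enhancement, I would use that $\MW$ is monoidal on constructible objects and hence preserves dualizable motives, after which \Cref{cor:RGammadagger} applied componentwise shows that the de Rham realization, precomposed with duality, sends a dualizable motive in $\RigDA(\mfT_{\Q_p}^\dagger)$ into $\Vect(\mfT_{\Q_p}^\dagger)$. Assembling these in the same way as above produces $\dR_{\rig}^\vee$ landing in $\lim_{\Ad^\dagger_{/S}} \Vect(\mfT_{\Q_p}^\dagger)$, still compatible with pullbacks and tensor products.

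It remains to identify the value of $\dR_{\rig}(-/\mfT_{\Q_p}^\dagger)$ on $\Q_S(X)$ for a smooth $X\to S$ whose pullback $X\times_S\mfT_\sigma$ admits a model $(\mfY, \mfY_{\Q_p}^\dagger, g) \to \mcT$. By the explicit construction of $\MW$ (cf.\ \cite[Definition 5.3]{vezz-MW}), under such a hypothesis the functor $\MW_{\mcT}$ sends $\Q_S(X)$ to $\Q_{\mfT_{\Q_p}^\dagger}(\mfY_{\Q_p}^\dagger)$, and by the very definition of $\dR_{\mfT_{\Q_p}^\dagger}$ its image is $\dR(\mfY_{\Q_p}^\dagger/\mfT_{\Q_p}^\dagger)$. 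I expect the main obstacle to be the bookkeeping required to verify coherence over the index category $\Ad^\dagger_{/S}$, but once one invokes the functoriality of both $\MW$ and the dagger de Rham realization separately, this is formally forced.
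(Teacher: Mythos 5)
Your proposal follows essentially the same approach as the paper: compose the Monsky--Washnitzer realization of \Cref{prop:MW} componentwise with the dagger de Rham realization, assemble these into a functor to the limit category using the functoriality already established (\Cref{rmk:fun}, \Cref{prop:dRdaggerismotivic}, \Cref{cor:deRhamrealisationlimit1}), and restrict to dualizable objects via \Cref{cor:RGammadagger}. The paper's written proof cites \Cref{cor:dRdaggeroncontructible} for compatibility with pullbacks and tensor products where you cite \Cref{prop:dRdaggerismotivic}, but the relevant content of the two statements is the same for this purpose; your explicit check of the identification of $\dR_{\rig}(\Q_S(X)/\mfT_{\Q_p}^\dagger)$ with $\dR(\mfY_{\Q_p}^\dagger/\mfT_{\Q_p}^\dagger)$ via the construction of $\MW$ is a welcome spelling-out of a step the paper leaves implicit.
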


\begin{proof}
    The composition {$\dR_{\mfT^\dagger_{\Q_p}}\circ \MW$} induces a functor 
    $\DA(S) \rightarrow \QCoh(\mfT_{\Q_p}^\dagger)^{\op}$
    which when restricted to constructible motives is compatible with 
    pull-backs and tensor products according to { \Cref{prop:dRdaggerismotivic}}.
    Passing to the limit over $\Ad_{/S}^{\dagger}$ 
    we obtain the first functor 
    $$
        \dR_{\rig}\colon  \DA(S)_{\ct}\to \lim_{\Ad^{\dagger\op}_{/S}}\QCoh^{\dagger}(\mfT_{\Q_p}^\dagger)^{\op}.
    $$
    Restricting to dualizable objects using the functor $\dR^{\dagger\vee}$ from \Cref{cor:RGammadagger} and passing again to limits by virtue of \Cref{cor:deRhamrealisationlimit1}(2), 
    we obtain the second functor.
\end{proof}

\begin{rmk}
    If we want to emphasize the role of the base $S$, we will write $\dR_{\rig}(M)$ as $\dR_{\rig,S}(M)$ or $\dR_{\rig}(M/S)$. As usual, if $M=p_!p^!\one$ is the motive attached to a finite type map $p\colon X\to S$, we will abbreviate $\dR_{\rig}(M/S)$ by $\dR_{\rig}(X/S)$.
\end{rmk}
\begin{rmk}\label{rmk:down-to-earth}
    In down to earth terms, we deduce that for any dualizable motive $M\in\DA(S)$, the objects $H^i\dR_{\rig}^\vee(M/S)$ are in the (lax) limit category $\lim \Vect(\mfT_{\Q_p}^\dagger)$ (we now indicate by $\Vect$ the \emph{abelian 1-category} of vector bundles), see e.g. \cite[Proposition 4.6.5]{gray}. Each of them consists in a collection $\mcE=\{\mcE_{\mcT^\dagger}\}$ of vector bundles  over $\mfT_{\Q_p}^\dagger$ for each object $\mcT^\dagger=(\mfT,\mfT^\dagger_{\Q_p},f)\in\Ad^\dagger_{/S}$, with   pull-back isomorphisms $\alpha^*\mcE_{\mcT^\dagger}\simeq\mcE_{\mcT'^\dagger}$ for any map $\alpha\colon\mcT'^\dagger\to\mcT^\dagger$ in $\Ad^\dagger_{/S}$, subject to the usual cocycle conditions. In particular, they give rise to a so-called \emph{overconvergent isocrystal over $S$}, see {\Cref{dfn:isocrystals}}.%
\end{rmk}

\begin{rmk}
    Arguing as in \Cref{rmk:thomason}, we can point out that the functors $\DA(-)_{\ct}$,  $\DA(-)_{\dual}$, $\lim_{\Ad^{\dagger\op}_{/-}}\QCoh^{\dagger}$ and  $\lim_{\Ad^{\dagger\op}_{/-}}\Vect^{}$  are Zariski sheaves. 
\end{rmk}

As in \Cref{rmk:propersmooth} we can again exhibit a relevant class of dualizable motives in $\DA(S)$.

\begin{rmk}\label{rmk:dualinDA}
    Let $f\colon X\to S$ be a proper and smooth morphism of schemes over $k$. The motive $f_*\one$ is dualizable in $\DA(S)$. More generally, arguing as in \Cref{rmk:propersmooth}, one can show that whenever $M\in\DA(X)$ is dualizable, then $f_*M$ is dualizable, with dual $f_\sharp M^\vee$
\end{rmk}

\subsection{Frobenius structures}\label{sec:Frob}

We now use the so-called ``separatedness'' property of motives (cfr. \cite{AyoubEt}) to easily equip the objects $\dR_{\rig}(X/S)$ with Frobenius structures  (cfr. \cite[Remark 2.32]{pWM} and \cite[Proposition 4.27]{BGV}) therefore getting the functors $\dR_{\rig}^\varphi$ of the introduction.

\begin{dfn}\label{dfn:phi_equi}
    Let $\varphi$ be an endofunctor of an $\infty$--category $\mcC$. We denote by $\mcC^\varphi$ the $\infty$--category of the $\varphi$-fixed points, i.e. the limit of the diagram $(\varphi,\id)\colon\mcC\rightrightarrows\mcC$. Its objects are given by pairs $(X,f)$ where $X$ is an object of $\mcC$ and $f$ is an equivalence $X\simeq \varphi X$.
\end{dfn}

\begin{dfn}\label{dfn:F}
    Whenever $h\colon S'\to S $ is a universal homeomorphism, then $h^*$ induces an equivalence $\DA(S)\simeq \DA(S')$ (see e.g. \cite[Theorem 2.9.7]{agv}). 
    In particular,  the pullback along the absolute Frobenius    $\varphi^*$ induces an automorphism of $\DA(S)$. 
    The functor  $X\mapsto (X,\varphi_{X/S})$ sending $X\in\Sm/S$ to the map between motives $\varphi_{X/S}\colon \Q_S(X)\simeq \varphi^*\Q_S(X)$ induced by the relative Frobenius map of $X$ over $S$ induces  
    a canonical functor $F\colon \DA(S)\to \DA(S)^\varphi$ which is monoidal and compatible with pullbacks (see e.g. \cite[Corollary 2.26]{LBV}).
\end{dfn}

We recall that there is a Frobenius action on isocrystals as well.

\begin{rmk}\label{rmk:Frobs}
    Not only is the association $S\mapsto \Ad^\dagger_{/S}$  functorial  in $S$, varying in schemes over $k$ (see \Cref{rmk:fun}), but the absolute Frobenius map $\varphi_S\colon S\to S$ also induces an endofunctor $\varphi$ on it: it sends an object $\mcT^\dagger=(\mfT, \mfT_{\Q_p}^\dagger,f)$ to the object $\sigma\mcT^\dagger=(\sigma\mfT,\sigma\mfT_{\Q_p}^\dagger,\varphi\circ f) $ where $\sigma\mfT$ is the same formal scheme $\mfT$, but whose structural morphism over $W(k)$ is post-composed with the canonical lift of Frobenius $\sigma$ on $W(k)$. In particular, this induces an endofunctor $\varphi$ on the categories $\lim_{\Ad^{\dagger{\op}}_{/S}}\RigDA(\mfT_{\Q_p}^\dagger)$, $\lim_{\Ad^{\dagger{\op}}_{/S}}\QCoh(\mfT_{\Q_p}^\dagger)$ and $\lim_{\Ad^{\dagger{\op}}_{/S}}\Vect(\mfT_{\Q_p}^\dagger)$ for which the realization at $\mcT^\dagger$ is given by the natural projection relative to the object $\sigma\mcT^\dagger$ (cfr. \Cref{rmk:fun}).
\end{rmk}

\begin{prop}\label{prop:phi-equi}
    The functor $\MW$ of \Cref{prop:MW} and the functors $\dR_{\rig}$ and $\dR^\vee_{\rig}$ of \Cref{prop:Berth-real} are equivariant, with respect to the {endo}functor $\varphi=\varphi_S^*$ on $\DA(S)$ induced by pull-back along the absolute Frobenius of $S$, and the functors $\varphi$ of \Cref{rmk:Frobs}.
\end{prop}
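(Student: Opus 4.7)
My plan is to verify the equivariance pointwise, using the universal property of the limit category. For $\MW$, we want a natural equivalence $\MW\circ \varphi_S^*\simeq \varphi\circ \MW$ as functors $\DA(S)\to \lim_{\Ad^\dagger_{/S}}\RigDA(\mfT_{\Q_p}^\dagger)$. Since both sides land in the limit, it suffices to check the equivalence after projection to each object $\mcT^\dagger=(\mfT,\mfT^\dagger_{\Q_p},f)\in \Ad^\dagger_{/S}$, and to arrange the resulting equivalences coherently as $\mcT^\dagger$ varies.

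By the construction of $\MW$ recalled in \Cref{prop:MW}, the projection at $\mcT^\dagger$ of $\MW(\varphi_S^*M)$ is the composition $\xi\circ f^*\circ\varphi_S^*$ applied to $M$. On the other hand, by \Cref{rmk:Frobs}, the endofunctor $\varphi$ on the target limit category is by definition the reindexing $\mcT^\dagger\mapsto \sigma\mcT^\dagger=(\sigma\mfT,\sigma\mfT_{\Q_p}^\dagger,\varphi\circ f)$, so the projection at $\mcT^\dagger$ of $\varphi\,\MW(M)$ coincides with the projection of $\MW(M)$ at $\sigma\mcT^\dagger$, namely $\xi\circ (\varphi\circ f)^*$ applied to $M$. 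The canonical identification $(\varphi\circ f)^*\simeq f^*\circ\varphi_S^*$, supplied by the pseudo-functoriality of pullback, then gives the desired equivalence pointwise; it is plainly natural in $\mcT^\dagger$, so it assembles into an equivalence of functors with values in the limit.

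The same scheme then handles $\dR_{\rig}$ and $\dR^\vee_{\rig}$: as in the proof of \Cref{prop:Berth-real}, these are obtained by post-composing $\MW$ with the overconvergent de Rham realization of \Cref{cor:deRhamrealisationlimit1} at each member of the limit diagram, and the Frobenius on the target is again the reindexing $\mcT^\dagger\mapsto \sigma\mcT^\dagger$. Since $\sigma\mfT_{\Q_p}^\dagger$ coincides with $\mfT_{\Q_p}^\dagger$ as a dagger rigid analytic variety over $K$ (only the reference map to $W(k)$ is twisted by $\sigma$), the de Rham realization is insensitive to the twist, so equivariance for $\MW$ propagates through. The main subtlety I foresee is not the object-wise comparison but promoting it to a genuine natural transformation between functors landing in the $\infty$-categorical limits; however, this should follow formally since every identification used comes from the functoriality of pullback and from the very definition of the $\varphi$-action on $\Ad^\dagger_{/S}$.
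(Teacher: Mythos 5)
Your proof is correct and takes essentially the same approach as the paper: both reduce equivariance to the pointwise observation that $(\varphi_S\circ f)^*\simeq f^*\circ\varphi_S^*$, applied to the composite defining $\MW$ at each $\mcT^\dagger\in\Ad^\dagger_{/S}$, and both then dispatch $\dR_{\rig}$ and $\dR^\vee_{\rig}$ by noting they are post-compositions of $\MW$ with de Rham realizations. Your additional remark that $\sigma\mfT_{\Q_p}^\dagger$ and $\mfT_{\Q_p}^\dagger$ agree as dagger varieties (only the structural map to $W(k)$ being twisted) is a welcome clarification of a point the paper leaves implicit.
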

\begin{proof}
    For the first functor, we need to check that for any object $(\mfT,\mfT^\dagger_{\Q_p},f)$ of $\Ad^{\dagger}_{/S}$ the  diagram 
    $$
        \xymatrix@R=4mm{
        \DA(S)\ar[r]^{(\varphi_S\circ f)^*}\ar[dd]^{\varphi_S^*} & \DA(\mfT_{\sigma}) \ar[rd]^-{\MW}\\&&\RigDA(\mfT_{\Q_p})\simeq\RigDA(\mfT^\dagger_{\Q_p}) \\
        \DA(S)\ar[r]^{ f^*}&\DA(\mfT_{\sigma})\ar[ru]_-{\MW}
        }
    $$
    commutes, which is obvious.  
    The other functors are simply obtained from (restrictions of) the first, composed with de Rham realizations.  
\end{proof}

\begin{cor}\label{cor:Frobenius_structures}
    The functors $\MW$ of \Cref{prop:MW} and the functors $\dR_{\rig}$ and $\dR^\vee_{\rig}$ of \Cref{prop:Berth-real} can be enriched into functors
    $$
        \MW^\varphi\colon \DA(S)\to (\lim_{\Ad^{\dagger\op}_{/S}}\RigDA(\mfT_{\Q_p}^\dagger))^{\varphi}
    $$
    $$
        \dR^\varphi_{\rig}\colon \DA(S)_{\ct}\to (\lim_{\Ad^{\dagger\op}_{/S}}\QCoh(\mfT_{\Q_p}^\dagger)^{\op})^\varphi
    $$
    $$
        \dR^{\vee\varphi}_{\rig}\colon \DA(S)_{\dual}\to (\lim_{\Ad^{\dagger\op}_{/S}}\Vect(\mfT_{\Q_p}^\dagger)^{})^\varphi
    $$
    which are compatible with pullbacks and tensor products.
\end{cor}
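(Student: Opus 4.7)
The plan is to exploit the functor $F \colon \DA(S) \to \DA(S)^\varphi$ of \Cref{dfn:F} together with the equivariance established in \Cref{prop:phi-equi}. Recall that the former sends a motive $\Q_S(X)$ to the pair $(\Q_S(X),\varphi_{X/S})$ where $\varphi_{X/S} \colon \Q_S(X) \isoto \varphi^*\Q_S(X)$ is induced by the relative Frobenius; thus every motive is tautologically promoted to a $\varphi$-fixed point via its relative Frobenius. The game is then to transport this structure across the realization.

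Abstractly, given an endofunctor $\varphi$ on $\mcC$ and $\mcD$ and a functor $G \colon \mcC \to \mcD$ equipped with a natural equivalence $\alpha \colon G \circ \varphi \isoto \varphi \circ G$ (i.e., an object of $\Fun(\mcC,\mcD)^{\varphi}$), one obtains a canonical induced functor $G^\varphi \colon \mcC^\varphi \to \mcD^\varphi$ on $\varphi$-fixed points, sending $(X,f \colon X \isoto \varphi X)$ to $(GX, \alpha_X \circ Gf)$. In the $\infty$-categorical setting this is simply the functoriality of $(-)^\varphi \colon \Fun([\bullet\rightrightarrows\bullet],\Cat_\infty) \to \Cat_\infty$ as a limit construction, applied to the square provided by the equivariance data. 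The first step is therefore to upgrade the equivalences recorded in \Cref{prop:phi-equi} from pointwise statements to natural equivalences of functors; this is the only non-formal point and follows from the naturality of the Frobenius pull-back squares appearing in that proof (each square is itself functorial in the object of $\Ad_{/S}^\dagger$, so the whole system is coherent).

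Once this is in place, \Cref{prop:phi-equi} produces $\varphi$-equivariant structures on $\MW$, $\dR_{\rig}$ and $\dR^\vee_{\rig}$, hence $\varphi$-fixed-point functors
\[
\MW^\varphi_0 \colon \DA(S)^\varphi \to \bigl(\lim_{\Ad^\dagger_{/S}}\RigDA(\mfT_{\Q_p}^\dagger)\bigr)^\varphi,
\]
and analogously for the other two. The functor $\MW^\varphi$ of the statement is defined by pre-composing $\MW^\varphi_0$ with $F \colon \DA(S) \to \DA(S)^\varphi$, and similarly for $\dR^\varphi_{\rig}$ and $\dR^{\vee\varphi}_{\rig}$ (restricted to constructible, resp.\ dualizable, objects).

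It remains to check compatibility with pullbacks and tensor products. Since both $F$ and the equivariance data of \Cref{prop:phi-equi} are compatible with pullbacks along $g \colon S' \to S$ (as the formation of the relative Frobenius and the diagram in the proof of \Cref{prop:phi-equi} are both pulled back coherently) and $(-)^\varphi$ preserves monoidal structures (it is computed as a lax equalizer in $\CAlg(\Cat_\infty)$, where a monoidal endofunctor acts), the resulting $\varphi$-fixed-point functors inherit these compatibilities from the underlying ones established in \Cref{prop:MW} and \Cref{prop:Berth-real}. The only genuine obstacle in this argument is the coherent promotion of the equivariance data to a natural transformation of functors of $\infty$-categories; everything else is a formal consequence of the limit description of $(-)^\varphi$.
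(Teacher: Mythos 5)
Your proposal is correct and follows essentially the same route as the paper: pre-compose with the functor $F\colon \DA(S)\to\DA(S)^\varphi$ of \Cref{dfn:F}, then apply the $\varphi$-fixed-point functor induced on equalizers by the equivariance of \Cref{prop:phi-equi}. The paper states this as a one-line combination; you flesh out the $\infty$-categorical mechanism (functoriality of the limit $(-)^\varphi$ applied to the equivariance square) and correctly flag the one non-formal ingredient, namely that \Cref{prop:phi-equi} must furnish a coherent natural equivalence rather than a pointwise one — which is indeed how the paper's argument must be read, and which your appeal to the functoriality of the Frobenius squares in $\Ad^\dagger_{/S}$ justifies.
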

\begin{proof}
    It suffices to combine the functor $F\colon \DA(S)\to \DA(S)^\varphi$ of  \Cref{dfn:F}
    with the functors $\DA(S)^{\varphi}\to (\lim_{\Ad^{\dagger\op}_{/S}}\RigDA(\mfT_{\Q_p}^\dagger))^{\varphi}$ [resp. $(\lim_{\Ad^{\dagger\op}_{/S}}\QCoh(\mfT_{\Q_p}^\dagger)^{\op})^\varphi$ and  $(\lim_{\Ad^{\dagger\op}_{/S}}\Vect(\mfT_{\Q_p}^\dagger)^{})^\varphi$] induced by \Cref{prop:MW} [resp. \Cref{prop:Berth-real}] and 
    and \Cref{prop:phi-equi}.  
\end{proof}

\begin{rmk}
   \label{rmk:down-to-earth2}
   In light of \Cref{rmk:down-to-earth}, we deduce that  for any dualizable motive $M\in\DA(S)$, each object $H^i\dR_{\rig}(M/S)$ consists of a collection $\mcE=\{\mcE_{\mcT^\dagger}\}$ of vector bundles  over $\mfT^\dagger_{\Q_p}$ for each object $\mcT^\dagger=(\mfT,\mfT^\dagger_{\Q_p},f)\in\Ad^\dagger_{/S}$ with  pull-back isomorphisms $\alpha^*\mcE_{\mcT^\dagger}\simeq\mcE_{\mcT'^\dagger}$ subject to the cocycle condition, equipped with a canonical isomorphism $\Phi\colon \mcE\simeq\varphi\mcE$. In particular, they give rise to so-called \emph{overconvergent $F$-isocrystals over $S$}, see {\Cref{dfn:isocrystals}}.
\end{rmk}

\subsection{Berthelot's conjecture}\label{sec:b+}

Berthelot visionarily stated the conjecture we discuss in this paper long before a well-behaved theory of quasi-coherent sheaves for rigid analytic varieties was introduced, and therefore stated it only under some hypotheses \cite[(4.3)]{berth-rig} (we will give a precise formulation of his conjecture below after introducing some notation). 
His statement has been translated and generalized in different ways by other authors (such as Tsuzuki \cite{tsuzuki_coherence} and Shiho \cite{shiho_relative1})  removing these hypotheses (and adding more general coefficients). 
We devote this section to a recollection of Berthelot's relative rigid cohomology, and its reconciliation with  the ``solid'' and ``Monsky--Washnitzer'' approach we take in this paper.

\begin{dfn}\label{dfn:Sh(j)}
    Let $\mcS^\dagger=[\mcS\Subset \mcS_0]$ be a quasi-compact dagger structure. 
    For any abelian sheaf $\mcF$ on (the analytic site of) $\mcS_0$ we let $j^\dagger_{\mcS}\mcF$ be the abelian sheaf defined as $\iota_*\iota^{{-1}}\mcF$ where $\iota\colon\bar{\mcS}\subset \mcS_0$ is the  inclusion of the topological closure of $\mcS_0$ in $\mcS$ (as adic spaces). 
    It is the universal sheaf over $\mcF$ which has support in $\bar{\mcS}$. 
    If no confusion arises, we write $j^\dagger$ instead of  $j_\mcS^\dagger$. The sheaf $j^\dagger\mcO$ is a sheaf of rings on $\mcS_0$ and the abelian category $\Sh(\mcS_0,j^\dagger\mcO)$ is equivalent to the category $\Sh(\bar{\mcS},\iota^{{-1}}\mcO)$.   If $f\colon \mcX^\dagger=[\mcX\Subset\mcX_0]\to\mcS^\dagger=[\mcS\Subset\mcS_0]$ is a map of quasi-compact dagger structures, we can also define the category %
    $\Sh(\mcX_0,j^\dagger f^{{-1}}\mcO)\simeq\Sh(\bar{\mcX},f^{{-1}}\iota^{{-1}}\mcO)$ and  natural functors
    $$
        j^{\dagger}\colon\Sh(\mcX_0,\mcO)\rightarrow\Sh(\mcX_0,j^\dagger\mcO),\qquad j^{\dagger}\colon\Sh(\mcX_0,f^{{-1}}\mcO)
        {\to}\Sh(\mcX_0,j_\mcX^\dagger f^{{-1}}\mcO)
    $$  
    and natural adjoint pairs (cfr. \cite{chiar-tsu}, \cite[Definition 5.3.11]{lestumbook}):
    $$
        f^{*}\colon\Sh(\mcS_0,j_\mcS^\dagger\mcO)\leftrightarrows\Sh(\mcX_0,j^\dagger_\mcX f^{{-1}}\mcO)\colon f_*
    $$
    $$
        f^{\dagger*}\colon\Sh(\mcS_0,j_\mcS^\dagger\mcO)\leftrightarrows\Sh(\mcX_0,j_\mcX^\dagger\mcO)\colon f^\dagger_*.
    $$ 
\end{dfn}

\begin{rmk}
    The category $\Sh(\bar{\mcS},\iota^{{-1}}\mcO)$ is not intrinsic to the sole datum of $\mcS$ as the sheaf $\iota^{{-1}}\mcO$ depends on a choice of an embedding of  its compactification inside some $\mcS_0$.  For instance, the sections on a qcqs open  $\mcU$ of $\bar{\mcS}$ are given by (cfr. \cite[Proposition 5.1.12]{lestumbook}) $\varinjlim_{\mcU\subset \mcV\stackrel{\circ}{\subset}\mcS_0}\mcO(\mcV)$
\end{rmk}

{  From now on, we will denote derived $\infty$-categories with the notation $\Sh(-)$, which was previously used for the \emph{abelian} categories of sheaves 
  (this is justified by \cite[Theorem 2.1.2.2]{lurie:SAG}). Similarly, we will denote derived functors using the  notation which was used for abelian functors  (that is, we will write $f_*$ for $Rf_*$ etc.).
  }

\begin{rmk}
    Since $\bar{|\mcS|}\cong\varprojlim|\mcS_h|$, we have that the presentable $\infty$-category $\Sh(\bar{\mcS},\iota^{{-1}}\mcO)$ is equivalent to the category $\varinjlim \Sh(\mcS_h,\mcO_{\mcS_h})$ and   $\Sh(\bar{\mcX},f^{{-1}}\iota^{{-1}}\mcO)$ is equivalent to the category $\varinjlim \Sh(\mcX_h,f^{{-1}}\mcO_{\mcS_h})$ (cfr. \cite[Lemmas 2.4.21, 3.5.6]{agv}). 
\end{rmk}

\begin{rmk}\label{rmk:descent_of_Sh}
    The $\infty$-category functor $\mcU\mapsto\Sh(\bar{\mcU},\Z)$ has descent over the analytic site of $\mcS$. To see this, one can use the description 
    $$
        \Sh(\bar{\mcS},\Z)\simeq\varinjlim\Sh(\mcS_h,\Z)\simeq\bigcap\Sh(\mcS_h,\Z)\subseteq\Sh(\mcS_0,\Z)
    $$
    and descent for the categories 
    $\Sh(\mcS_h,\Z)$ (cfr. \Cref{rmk:easydescent}). We deduce that also the $\infty$-category functors $\mcU\mapsto\Sh(\bar{\mcU},\iota^{{-1}}\mcO)$ and $\mcV\mapsto\Sh(\bar{\mcV},\iota^{{-1}}f^{{-1}}\mcO_{\mcS_0})$ have analytic descent ({cfr. the proof of} \cite[Proposition 3.5.1]{agv} {for a similar situation}). 
    This allows the definition of $\Sh(\mcS_0,j^\dagger\mcO)$, 
    $\Sh(\mcX_0,j^\dagger f^{{-1}}\mcO)$ and of the adjoint pairs $(f^*,f_*)$, $(f^{\dagger*},f_*^\dagger)$ for a arbitrary (maps between) dagger varieties.  
\end{rmk}

The difference between considering dagger varieties $\mcS^\dagger$ and ringed spaces like $(\mcS_0,j^\dagger\mcO)$ disappears as soon as one deals with coherent modules, as the next classical result explains.

\begin{prop}\label{prop:coh=coh}
    The abelian category of coherent modules in $\Sh(\mcS_0,j^\dagger\mcO)$ 
    is equivalent to the category of coherent $\mcO^\dagger$-modules on $\mcS^\dagger$. 
    Under this equivalence, for any map of quasi-compact dagger structures $\alpha$, the base-change functor $\alpha^{\dagger *}$ corresponds to the base change functor along the induced map of dagger varieties.
\end{prop}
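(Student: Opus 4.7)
The plan is to reduce to the affinoid case by analytic descent and then identify both categories with the category of finitely presented modules over the dagger algebra $\mcO^\dagger(\mcS^\dagger)$. Both sides satisfy analytic descent: for coherent $j^\dagger\mcO$-modules this follows from the description $\Sh(\bar{\mcS},\iota^*\mcO) \simeq \varinjlim_h \Sh(\mcS_h,\mcO_{\mcS_h})$ combined with the analytic descent for each $\Sh(\mcS_h,\mcO_{\mcS_h})$ noted earlier, while for coherent $\mcO^\dagger$-modules on $\mcS^\dagger$ this is standard (cfr.\ \cite[Appendix A]{vezz-MW} and \cite{GrosseKloenne2000}). Hence I may and will assume $\mcS^\dagger = [\mcS \Subset \mcS_0]$ is a dagger affinoid, with $\mcS_0 = \Spa A_0$ and $\mcS_h = \Spa A_h$ for a cofinal system of strict neighbourhoods.

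On the $(\mcS_0,j^\dagger\mcO)$-side, a coherent module $\mcF$ is by definition locally of finite presentation over $j^\dagger\mcO = \iota_*\iota^*\mcO$. Because $\iota^*\mcO \simeq \varinjlim \mcO_{\mcS_h}$ and finite presentation is a compactness condition, one can use a standard spreading out argument to find some $h$ and a coherent $\mcO_{\mcS_h}$-module $\mcF_h$ with $\mcF \simeq \iota_*\iota^*\mcF_h$; two such choices agree after passing to a further strict neighbourhood. Kiehl's theorem then identifies $\mcF_h$ with a finitely presented $A_h$-module $M_h$, and $\mcF$ corresponds to the finitely presented module $M = M_h \otimes_{A_h} \mcO^\dagger(\mcS^\dagger)$ over the dagger algebra $\mcO^\dagger(\mcS^\dagger) = \varinjlim A_h$. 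On the dagger-variety side, coherent $\mcO^\dagger$-modules on $\mcS^\dagger$ are equivalent to finitely presented $\mcO^\dagger(\mcS^\dagger)$-modules by the classical Grosse-Klönne theory (see \cite[Theorem 1.11, Proposition 2.16]{GrosseKloenne2000} or \cite[Proposition A.20]{vezz-MW}). Comparing the two descriptions yields the desired equivalence.

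For the base-change compatibility, let $\alpha\colon \mcX^\dagger \to \mcS^\dagger$ be a map of quasi-compact dagger structures. Both $\alpha^{\dagger *}$ on the $(j^\dagger\mcO)$-module side and the pull-back $\alpha^*$ on the dagger-variety side, once translated through the equivalences above, become the tensor product $M \mapsto M \otimes_{\mcO^\dagger(\mcS^\dagger)} \mcO^\dagger(\mcX^\dagger)$ on finitely presented modules; indeed, after descending to some level $h$ both are computed by $M_h \otimes_{A_h} \mcO(\mcX_h)$ and then taking the filtered colimit over further strict neighbourhoods. The compatibility then follows from functoriality of the spreading-out construction and the fact that, in the filtered colimit, the two base-change operations become canonically isomorphic.

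The main subtle point will be the descent/spreading-out step: showing that every coherent $j^\dagger\mcO$-module really comes from a coherent module on some $\mcS_h$, and that morphisms between two such are realised at a finite level, in a way that is functorial up to cofinality. Once this compactness input is in place, the remainder of the argument is a translation between two equivalent presentations of the same category of finitely presented modules over $\mcO^\dagger(\mcS^\dagger)$, and the base-change statement reduces to the well-known behaviour of tensor products of finitely presented modules under filtered colimits of the base ring.
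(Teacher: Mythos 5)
Your argument is correct and follows essentially the same route as the paper, which simply cites \cite[Theorem 5.4.4, Proposition 5.4.8]{lestumbook} for the affinoid case (these are precisely the spreading-out of a coherent $j^\dagger\mcO$-module to a strict neighbourhood and the identification with finitely presented modules over the dagger algebra that you sketch) and \cite[Theorem 2.16]{gk-over} for the gluing step. The only difference is that you re-derive the content of those references rather than invoking them, and the ``subtle point'' you flag is exactly what Le Stum's Theorem 5.4.4 establishes.
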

\begin{proof}
    Use \cite[Theorem 5.4.4, {proof of} Proposition 5.4.8]{lestumbook} for the affinoid case, and \cite[Theorem 2.16]{gk-over} to glue. 
\end{proof}

We give now a definition of overconvergent isocrystals which is a priori 
slightly less general than the original one. 
\begin{dfn}\label{dfn:isocrystals}
Let $S$ be a $k$-variety. 
For a proper frame $(T,\overline{T},\mfP)$ over $S$ 
let $\mcT^\dagger=[\mcT\Subset \mcT_0]$ be the induced dagger variety according to \Cref{example: frames}. 
An \emph{overconvergent isocrystal $\mcE$ on $S$ over $K$} is a family indexed by 
proper $W(k)$-frames $(T,\overline{T},\mfP)$ over $S$
of coherent $j^\dagger_\mcT\mcO_{\mcT_0}$-modules $\mcE_{\mcT^\dagger}$, 
such that for each morphism of proper $W(k)$-frames 
$f:(T',\overline{T}',\mfP')\rightarrow (T,\overline{T},\mfP)$ 
there is an invertible transition map 
$$
\tau_f:f^{\dagger*} \mcE_{\mcT^\dagger} \xrightarrow{\sim} \mcE_{{\mcT'}^\dagger}
$$
satisfying the cocycle condition
$$
\tau_{g \circ f} = \tau_g\circ g^\ast\tau_f.
$$
The $j^\dagger_\mcT\mcO$-module $\mcE_{\mcT^\dagger}$ is called the \emph{realization} of $\mcE$ on $\mcT^\dagger$ (or equivalently on $(T,\overline{T},\mfP)$). 
We denote the category of overconvergent isocrystals on $S$ over $K$ by $\Isoc^\dagger(S/K)$. 

As in \Cref{rmk:Frobs}, the pullback along the absolute Frobenius $\varphi_S$ of $S$ induces an endofunctor $\varphi$ on $\Isoc^\dagger(S/K)$ (cfr. \cite[8.3.1]{lestumbook}). 
An \emph{overconvergent $F$-isocrystal on $S$ over $K$} is an overconvergent isocrystal $\mcE$ on $S$ over $K$ together with an isomorphism $\Phi_{\mcE}:\varphi\mcE\xrightarrow{\sim}\mcE$ 
(cfr. \cite[Definitions 8.3.2]{lestumbook}). 
We denote the category of overconvergent $F$-isocrystals on $S$ over $K$ by $F\text{-}\Isoc^\dagger(S/K)$.
\end{dfn}

\begin{rmk}
    Classically, the data for an overconvergent isocrystal includes the datum of realizations for \textit{all} frames $(T,\overline{T},\mfP)$ (cfr. \cite[Definition 8.1.1]{lestumbook}). 
    However by locality \cite[Proposition 8.1.5]{lestumbook} and the fact that the transition maps are isomorphisms, it suffices to consider proper frames. 
    {In order to}
    match the more general definition with the Monsky--Washnizer approach, one should allow more general ``dagger structures'' given by (equivalence classes of) open inclusions $\mcS\subset\mcS_0$ giving rise to inverse systems of open inclusions $\{\mcS\Subset_{\mcS_0} \mcS_h\}$ whose limit may be smaller than the relative compactification $\mcS^{/K}$ (see \cite[Proposition A.14]{vezz-MW}). 
    {We do this in \Cref{dfn:pseudo} and \Cref{referee2}.}
\end{rmk}

\begin{dfn}
    Let $f\colon X\to S$ be a smooth map of $k$-varieties and let $(T,\overline{T},\mfP)$ be a proper frame over $S$. 
    We let $\mcT^\dagger=[\mcT\Subset \mcT_0]$ be the induced dagger variety 
    as explained in \Cref{example: frames}. The relative rigid cohomology $Rf_{\rig*}(X/(T,\overline{T},\mfP))$ is defined as follows.

    In case there is a smooth lift of $f$ to \emph{proper} frames $(X'=X\times_ST,\overline{X}',\mfQ)\to(T,\overline{T},\mfP)$ (meaning that it induces a smooth morphism between the associated dagger varieties $\mcX^\dagger=[\mcX\Subset\mcX_0]\to\mcT^\dagger=[\mcT\Subset\mcT_0]$) then one defines $Rf_{\rig*}(X/(T,\overline{T},\mfP))$ as the object in $\Sh(\mcT_0,j^\dagger\mcO)$ given by
    $f_*j^\dagger\Omega_{/\mcT_0}$ where $\Omega_{/\mcT_0}$ is the element in $ \Sh(\mcX_0,f^*\mcO_{\mcT_0})$ associated to the complex $\mcV\mapsto\Omega^\bullet_{\mcV/\mcT_0}$. 
    It can be shown that any two complexes built from any two choices of lifts are canonically connected by isomorphisms in $\Sh({\mcT_0},j^\dagger\mcO)$ \cite[Corollary 10.5.4]{chiar-tsu} and that this definition is functorial and contravariant in $X$ \cite[Proposition 10.5.1]{chiar-tsu}.

    In the general case (that is, when there is no global smooth lift of frames of the map $X\times_ST\to T$) one defines $Rf_{\rig*}(X/(T,\overline{T},\mfT))$ by Zariski descent on $X$ \cite[Page 185]{chiar-tsu}. For any map of frames $\alpha\colon (T',\overline{T}',\mfP')\to(T,\overline{T},\mfP)$ over $S$ there is a canonical map $\alpha^{\dagger*} Rf_{\rig*}(X/(T,\overline{T},\mfP))\to Rf_{\rig*}(X/(T',\overline{T}',\mfP'))$ (see e.g. \cite[Section 2.1]{tsuzuki_coherence}).  {By \'etale descent \cite[Corollary 7.3.3]{chiar-tsu} and $\A^1$-invariance, the functor $X\mapsto Rf_{\rig*}(X/(T,\bar{T},\mfP))) $ extends to a motivic realization functor $\dR_B\colon\DA(T)\to\Sh(]\bar{T}[_{\mfP},j^\dagger\mcO)^{\op}$ (see \Cref{rmk:UP}).}
\end{dfn}

We can now give a precise formulation of Berhtelot's conjecture:
\begin{conj}{\cite[(4.3)]{berth-rig}}
    Let  $S$ be a $k$-scheme and $f:X\rightarrow S$ a smooth proper morphism; 
    assume that there is a compactification $S\hookrightarrow \overline{S}$ which has an immersion $\overline{S}\hookrightarrow \mfP$ into a smooth formal $W(k)$-scheme.   
    Then for each  $q\geq0$ the rigid cohomology sheaf $R^qf_{\rig*}(X/(S,\overline{S},\mfP))$ 
    arises canonically as realization of an element in $F\text{-}\Isoc^\dagger(S/K)$.
\end{conj}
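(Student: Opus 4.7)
The plan is to combine the motivic Frobenius-equivariant rigid realization of \Cref{cor:Frobenius_structures} with the dualizability of $f_*\one$ for smooth and proper $f$, and then translate the output into the classical overconvergent $F$-isocrystal language. Since $f\colon X\to S$ is smooth and proper, \Cref{rmk:dualinDA} ensures that $f_*\one$ lies in $\DA(S)_{\dual}$. Applying
$$
\dR^{\vee\varphi}_{\rig}\colon \DA(S)_{\dual}\longrightarrow \Big(\lim_{\Ad^\dagger_{/S}}\Vect(\mfT^\dagger_{\Q_p})\Big)^\varphi
$$
to $f_*\one$ and taking cohomology in degree $q$ yields, by \Cref{rmk:down-to-earth2}, a family $\mcE=\{\mcE_{\mcT^\dagger}\}_{\mcT^\dagger\in\Ad^\dagger_{/S}}$ of vector bundles on the dagger varieties $\mfT^\dagger_{\Q_p}$, with compatible pullback isomorphisms $\alpha^*\mcE_{\mcT^\dagger}\simeq\mcE_{\mcT'^\dagger}$ satisfying the cocycle condition and with a Frobenius isomorphism $\Phi\colon\mcE\simeq\varphi\mcE$.

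Next, I would translate this datum to the classical $j^\dagger\mcO$-module language. By \Cref{example: frames}, every proper $W(k)$-frame $(T,\overline{T},\mfP)$ over $S$ is naturally an object of $\Ad^\dagger_{/S}$, and morphisms of proper frames lift to morphisms there. \Cref{prop:coh=coh} identifies the abelian category of coherent $\mcO^\dagger$-modules on $\mcT^\dagger$ with the category of coherent $j^\dagger\mcO$-modules on $\mcT_0$, compatibly with pullbacks. Restricting $\mcE$ to proper frames and transferring each $\mcE_{\mcT^\dagger}$ across this equivalence yields a datum that matches exactly the definition of an overconvergent $F$-isocrystal on $S$ over $K$ in the sense of \Cref{dfn:isocrystals}.

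It remains to identify the realization of this $F$-isocrystal at $(S,\overline{S},\mfP)$ with Berthelot's $R^qf_{\rig*}(X/(S,\overline{S},\mfP))$. By \Cref{prop:Berth-real} and \Cref{rmk:computeOmega}, the value of $\dR^\vee_{\rig}$ at $\mcT^\dagger$ on $f_*\one$ is, locally on $X$ where a smooth dagger lift $(\mfX,\mfX^\dagger_{\Q_p})\to(\mfT,\mfT^\dagger_{\Q_p})$ exists, given by $f_*\varinjlim_h\underline{\Omega}_{\mcX_h/\mcT_h}$, while Berthelot's $f_*j^\dagger\Omega^\bullet_{/\mcT_0}$ is built from the de Rham complex of the same lift. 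Passing to $H^q$, the first step tells us that both are coherent (in fact locally free), so \Cref{prop:coh=coh} supplies a canonical isomorphism between the two constructions, functorial in the frame; Zariski descent on $X$ handles the absence of a global smooth lift.

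The main obstacle is precisely this last comparison, which bridges the Clausen--Scholze solid formalism used to build $\dR^\vee_{\rig}$ and Berthelot's classical $j^\dagger\mcO$-module formalism. The identification is clean on coherent cohomology groups, where \Cref{prop:coh=coh} applies; coherence is guaranteed \emph{a posteriori} by the dualizability input of the first step (without it, the two categories of modules genuinely disagree). One must then carefully verify that the pullback isomorphisms $\alpha^*$ and the Frobenius structure $\Phi$ transport unchanged across the equivalence, which reduces to the compatibility statement already built into \Cref{prop:coh=coh}.
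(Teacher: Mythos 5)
Your proposal follows the same overall strategy as the paper: apply the Frobenius-enriched, dualizable-restricted realization $\dR^{\vee\varphi}_{\rig}$ to the dualizable motive $f_*\one$, land in $(\lim\Vect(\mfT^\dagger_{\Q_p}))^\varphi$, pass to cohomology, and translate the resulting system of compatible vector bundles into the classical $j^\dagger\mcO$-module language. The paper proves this precisely as \Cref{conj!}, relying on \Cref{prop:BvsCS} for the derived-level comparison and on the splitting argument from \Cref{rmk:BvsCS2} to descend that comparison to individual cohomology groups.

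There is one step where your argument is underspecified and risks being circular. After identifying (as in \Cref{prop:BvsCS}) the complex $Rf_{\rig*}(X/(T,\overline{T},\mfP))$ with $u(\dR_{\rig}(X/\mcT^\dagger))$ in the derived category, the real problem — highlighted in \Cref{rmk:warning} — is that the ``underlying module'' functor $u$ need not be exact, so $H^q(u(\dR_{\rig}))$ is not automatically $u(H^q(\dR_{\rig}))$. You gesture at this by saying ``coherence is guaranteed a posteriori by the dualizability input,'' but \Cref{prop:coh=coh} alone (an equivalence of abelian categories of coherent sheaves) does not produce the needed commutation of $u$ with cohomology. The mechanism the paper uses, and that you should cite, is that $\dR_{\rig}(X/\mcT^\dagger)$ lies in $\Vect$, hence is locally a \emph{split} perfect complex $\bigoplus N_i[i]$ with $N_i$ projective; for such a complex $u$ trivially commutes with taking cohomology (this is the second sentence of \Cref{rmk:BvsCS2}). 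With that point made explicit, your argument is complete and matches the paper's.

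Two minor remarks: you re-derive \Cref{prop:BvsCS} locally rather than citing it — that is fine, but the proposition is exactly the right black box and handles the gluing via descent; and when applying $\dR^{\vee\varphi}_{\rig}$ to $f_*\one$, it is worth noting that $\dR^\vee(f_*\one)=\dR((f_*\one)^\vee)=\dR(f_\sharp\one)=\dR(X/S)$ by \Cref{rmk:dualinDA}, which is why this realization really computes Berthelot's pushforward of the structure sheaf.
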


In light of the definitions above, it makes sense to equivalently reformulate Berthelot's conjecture as follows.

\begin{conj}\label{conj:berth-2}
    Let $f:X\rightarrow S$ be a smooth proper morphism of $k$-schemes. 
    Then for each  $q\geq0$ 
    there exists a canonical overconvergent $F$-isocrystal which we denote by $R^qf_{\rig\ast}(X/S) \in F\text{-}\Isoc^\dagger(S/K )$, 
    such that for each proper frame $(T,\overline{T},\mfP)$ over $S$, the sheaf  
    $R^qf_{\rig*}(X/(T,\overline{T},\mfP))$ is the realization of $R^qf_{\rig\ast}(X/S)$ on $(T,\overline{T},\mfP)$.    
\end{conj}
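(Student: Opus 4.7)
The strategy is to feed the motive of a smooth and proper morphism into the enriched rigid realization $\dR_{\rig}^{\vee\varphi}$ of \Cref{cor:Frobenius_structures}, exploit dualizability to land in vector bundles over all dagger frames at once, and finally identify the output with Berthelot's classical higher direct images via the coherent-module comparison of \Cref{prop:coh=coh}.

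First, for $f\colon X\to S$ smooth and proper the motive $f_*\one\in\DA(S)$ is dualizable by \Cref{rmk:dualinDA}. Applying the functor of \Cref{cor:Frobenius_structures} yields
$$
\mcE\colonequals \dR_{\rig}^{\vee\varphi}(f_*\one)\in\left(\lim_{\Ad^{\dagger}_{/S}}\Vect(\mfT_{\Q_p}^\dagger)\right)^{\varphi}.
$$
Taking cohomology sheaves and invoking \Cref{rmk:down-to-earth2}, each $H^q\mcE$ is a compatible family $\{\mcE^q_{\mcT^\dagger}\}$ of vector bundles on the dagger varieties attached to objects $\mcT^\dagger\in\Ad^\dagger_{/S}$, together with compatible pull-back isomorphisms and a global Frobenius $\Phi\colon\mcE\simeq\varphi\mcE$. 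By \Cref{example: frames} every proper frame $(T,\overline{T},\mfP)$ produces such an object, and by \Cref{prop:coh=coh} a coherent module on the dagger variety attached to $(T,\overline{T},\mfP)$ is the same datum as a coherent $j^\dagger_\mcT\mcO$-module. This shows that the family $\{H^q\mcE_{\mcT^\dagger}\}$ is naturally an overconvergent $F$-isocrystal $R^qf_{\rig\ast}(X/S)\in F\text{-}\Isoc^\dagger(S/K)$, and moreover that each realization is a vector bundle (hence locally free, confirming the original prediction of \cite{berth-cris}).

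Second, to match this construction with Berthelot's classical sheaves, I combine \Cref{prop:Berth-real} with the explicit formula of \Cref{rmk:computeOmega}. When $f$ admits a lift to a smooth morphism of proper frames $(X',\overline{X}',\mfQ)\to(T,\overline{T},\mfP)$ inducing a smooth morphism of dagger varieties $\mcX^\dagger\to\mcT^\dagger$, the realization $\dR_{\rig}(f_*\one/\mfT_{\Q_p}^\dagger)$ is, after passing to duals via \Cref{cor:RGammadagger}, computed by $f_*\varinjlim_h \underline{\Omega}_{\mcX_h/\mcT_h}$. Under the equivalence of \Cref{prop:coh=coh} this is exactly $f_*j^\dagger\Omega^\bullet_{\mcX/\mcT}$, i.e.\ Berthelot's relative rigid complex $Rf_{\rig*}(X/(T,\overline{T},\mfP))$. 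The compatibility with pull-backs of frames is part of the functoriality already packaged in $\lim_{\Ad^{\dagger}_{/S}}$.

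Third, the one delicate step is the case where no global smooth lift $\mfQ$ of $X\times_ST\to T$ exists. On Berthelot's side the definition then proceeds by Zariski descent on $X$, following Chiarellotto--Tsuzuki. On the motivic side, both $\DA(-)$ and $\lim_{\Ad^{\dagger}_{/-}}\QCoh^{\dagger}$ are Zariski sheaves (as noted after \Cref{prop:Berth-real}), so I may cover $X$ by opens $U_i$ which admit local smooth lifts, run the previous paragraph for each $U_i$ and their intersections, and glue the resulting identifications. This is where I expect the main work to lie: one must check that the descent data produced by the motivic six-functor formalism on the dualizable object $f_*\one$ coincide with the classical descent data used to define $Rf_{\rig*}(X/(T,\overline{T},\mfP))$ in the absence of a global lift; once this comparison is made on affinoid opens, the desired isomorphism of $F$-isocrystals follows. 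The Frobenius equivariance comes for free from being in the $\varphi$-fixed category of \Cref{cor:Frobenius_structures}, and agrees with the classical Frobenius on $Rf_{\rig*}$ since both are induced by pull-back along the absolute Frobenius of $S$.
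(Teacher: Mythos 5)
Your high-level route matches the paper's: feed $f_*\one$ into the enriched realization $\dR^{\vee\varphi}_{\rig}$ of \Cref{cor:Frobenius_structures}, use dualizability plus \Cref{rmk:down-to-earth2} to land in a compatible family of vector bundles with Frobenius, then identify the output with Berthelot's $R^qf_{\rig*}$. But your second paragraph hides a genuine gap in how that identification is carried out. You write that $f_*\varinjlim_h\underline\Omega_{\mcX_h/\mcT_h}$ is, \emph{under the equivalence of \Cref{prop:coh=coh}}, ``exactly'' $f_*j^\dagger\Omega^\bullet$. That is not a correct use of \Cref{prop:coh=coh}: the latter is an equivalence between \emph{abelian categories of coherent modules}, whereas the two complexes you are comparing live in structurally different ambient $\infty$-categories, namely $\QCoh^\dagger(\mcT^\dagger)$ (solid quasi-coherent modules) versus $\Sh(\mcT_0,j^\dagger\mcO)$ (sheaves of $j^\dagger\mcO$-modules). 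The paper bridges these by introducing the ``underlying module'' functor $u\colon\QCoh^\dagger(\mcT^\dagger)\to\Sh(\mcT_0,j^\dagger\mcO)$ and proving the actual comparison statement \Cref{prop:BvsCS}: $Rf_{\rig*}(X/(T,\overline T,\mfP))\simeq u(\dR_{\rig}(X/\mcT^\dagger))$. You need some analogue of this in your argument; \Cref{prop:coh=coh} alone does not give it to you.

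The second, subtler gap is flagged in \Cref{rmk:warning}: even granting the complex-level comparison, the functor $u$ is \emph{not a priori exact}, so $H^q(u(\dR_{\rig}(X/\mcT^\dagger)))$ need not coincide with $u(H^q\dR_{\rig}(X/\mcT^\dagger))$. Your step ``taking cohomology sheaves'' in the first paragraph, combined with your claim in the second paragraph, implicitly assumes that taking cohomology commutes with passage to the underlying $j^\dagger\mcO$-module. The paper closes this gap by observing that $\dR_{\rig}(X/\mcT^\dagger)$ is a perfect complex whose cohomologies are vector bundles, so the complex \emph{locally splits}; for a (locally) split complex the underlying-module functor does commute with cohomology, and the comparison between solid cohomology groups and Berthelot's $R^qf_{\rig*}$ follows. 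Without this splitting argument (or an exactness argument for $u$, which is false in general), your proof does not establish that the $F$-isocrystal you construct has Berthelot's sheaves as its realizations. Your third paragraph's worry about Zariski descent is legitimate but is essentially subsumed in \Cref{prop:BvsCS}, whose proof proceeds by descent on source and target; the splitting issue, not the descent issue, is the one you should not have glossed over.
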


To put Berthelot's conjecture into the context of this paper, we make use of the following definition which tells us how solid quasi-coherent modules define overconvergent sheaves.

\begin{dfn}
    Let $\mcS^\dagger=[\mcS\Subset \mcS_0]$ be an affinoid dagger variety. 
    We define a functor 
    {$u\colon\QCoh(\mcS_h)\to\Psh(|\mcS_h|,\mcO(\mcS_h))$ by associating $M\in \QCoh(\mcS_h)$ and  a rational affinoid open $j\colon\mcU\subseteq\mcS_h$ to $R\Gamma(\ast,j^*M)\simeq\map_{\QCoh(\mcU)}(\one,j^*M)$ (where we denote by $\map$ the mapping complex). The descent property of $\QCoh$ implies that $u(M)$ is actually a sheaf, and the fact that $\QCoh(\mcU)$ is $\mcO(\mcU)$-enriched endows $u(M)$ with the structure of a (sheaf of) $\mcO$-modules.} 
    {As such, we define a functor $u\colon\QCoh(\mcS_h)\to\Sh(\mcS_h,\mcO)$ which, in other words,} is the  $\mcO$-module ``underlying'' the condensed $\mcO$-module $M$. Since $\one=\mcO(\mcU)_\blacksquare[*]$ is a compact object in $\mcD(\mcU)$,{ we deduce that the sections at $\mcU$ of $u(\bigoplus_i M_i)$ are the direct sum of the sections of $u(M_i)$ which coincide  with the sections of $\bigoplus u(M_i)$ (use e.g.  \cite[Lemme 0.11.5.1]{EGAIII1} taking into account that $|\mcS_h|$ is of bounded cohomological dimension \cite[Proposition 2.5.8]{dJ-vdP}).} 
    We then deduce that $u$ {preserves colimits and therefore } 
     defines a functor in $\Prl$.
     {Moreover, if $\alpha\colon \mcS_{h}\subset\mcS_{h-1}$ is a rational open immersion, then $u(\alpha^*M)(\mcU)\simeq \map(\one, j^*\alpha^*M)\simeq\map (\one,(\alpha\circ j)^*M)\simeq (\alpha^*(u(M)))(\mcU)$. In particular, $u$} 
      induces a functor 
    $u\colon \QCoh(\mcS^\dagger)=\varinjlim\QCoh(\mcS_h)\to\varinjlim\Sh(\mcS_h,\mcO)=\Sh(\mcS_0,j^\dagger\mcO)$. By analytic descent of both categories, this definition can be extended to the case of an arbitrary dagger variety $\mcS^\dagger$.
\end{dfn}

\begin{exm}\label{rmk:BvsCS2}
    Let $\mcS^\dagger$ be an affinoid dagger variety and $M=\underline{N}\in\QCoh(S^\dagger)$ be a dualizable object of $\QCoh(S^\dagger)$ induced by  a bounded complex $N$ of projective modules over $\mcO^\dagger(S)$. Then  the sheaf $u(M)$ is the same as the one induced by $N$ via the functor $\Perf(\mcO^\dagger(\mcS))\subset\mcD(\mathrm{Coh}(\iota^*\mcO))\to\Sh(\mcS_0,j^*\mcO)$. If $N$ is a split perfect complex $N=\bigoplus N_i[i]$ of projective modules, then the sheaves $H_i(u{(M)})$ coincide with the ones induced by the $N_i$'s. 
\end{exm}

{

In order to compare Berthelot's definition and our definition of relative rigid cohomology, we use Le Stum's formalism of overconvergent analytic spaces  from \cite{lestum1,lestum2}, as it easily accommodates both (classical) Berthelot's proper frames as well as Monsky--Washnitzer's approach via dagger varieties. 

Indeed, Berthelot's definition is tailored so that it gives a functorial generalization of Monsky--Washnitzer cohomology (which is classically defined only for affine schemes, and does not glue on the spot) see e.g. \cite[\S 2]{berth-rig}. We will prove that there is only \emph{one way } to generalize Monsky--Washnitzer cohomology in a ``motivic way'', and hence that Berthelot's and our definition must coincide.

We first prove the equivalence on a special class of frames 
for which the proof is the most direct. 
{This already implies the trivial coefficient case of \cite[Conjecture 0.2]{shiho_relative2}.}
These are the ``Monsky--Washnitzer frames'' which give rise to weakly complete algebras in the sense of \cite{mw-fc1} and are considered, for example,  in \cite[Proposition 2.2.11]{lestumbook} and \cite[\S 6.5]{lestum2}. Namely, we fix an  $\mcO_K$-algebra of finite type $A$ and a presentation of it as a quotient $A=\mcO_K[x_1,\ldots,x_N]/I$, and we let $\bar{A}$ [resp. $\widehat{A}$] be its reduction modulo $p$ [resp. its $p$-adic completion] and $T=\Spec\bar{A}$ [resp. ${\mfT=}\Spf \widehat{A}$] the associated spectrum. We assume that $\widehat{A}$ is integrally closed in $\widehat{A}[1/p]$.  
(By \cite[Theorem 7.4.1]{dJ} this is for example the case if $A$ is a flat, regular algebra over $\mcO$. Note that any smooth rigid analytic variety is \'etale locally  of the form $(\Spf \widehat{A})_{\Q_p}$ for such an $A$, see \cite[Theorem 3.3.1]{temkin}.) %

We fix the (proper) overconvergent space given by the triple $(T=\Spec \bar{A},\mfP=\P^N_{\Spf O},{\mcV}= \Spec(A_K)^{\an})$. The associated dagger variety $\mfT^\dagger_{{\Q_p}}=[\mfT_{\Q_p}\Subset \mcV]$ is the (dagger) spectrum of a dagger algebra obtained by the quotient of  $K\langle x_1,\ldots, x_N\rangle^\dagger$ by the ideal generated by $I$ (which is a dagger structure on $\mfT_{{\Q_p}}$). We lose no generality by replacing  $\mfP$ with the $p$-adic completion of the closure of $T\subset\P^N_{\Spec\mcO}$ and ${\mcV}$ with any strict neighborhood of $\mfT_{\Q_p}$ in it (see \cite[Remark after 3.35]{lestum1}). We call a frame $(T,\mfP,\mcV)$ as above a \emph{Monsky--Washnitzer frame}.

\begin{prop}\label{referee1}
    Let $(T,\mfP, \mcV)$ be a Monsky--Washnitzer frame as above. 
There is an equivalence between the functor 
    $$\DA(T)\xto{\dR} \QCoh(\mfT^\dagger_{\Q_p})^{\op}\xto{u} \Sh({\mcV},j^\dagger\mcO)^{\op}$$
     and the relative rigid-cohomology functor 
    $$
   R\Gamma_{B}\colon  \DA(T)\to \Sh({\mcV},j^\dagger\mcO)^{\op}
    $$
    defined by Berthelot (see \cite[\S10.4]{chiar-tsu}).
\end{prop}

\begin{proof}
We decompose the proof into intermediate steps.
\\
{\it Step 1:} 
We  may consider the site $\Sm^\dagger/\mfT\colonequals (\Sm^\dagger/\mfT^\dagger_{\Q_p}\times_{\Sm/\mfT_{\Q_p}}\Sm/\mfT, \et)$. Its objects are given by pairs $(\mfX,\mfX^\dagger_{\Q_p})$ where $\mfX$ is smooth over $\mfT$ and $\mfX_{\Q_p}^\dagger$ is a chosen dagger structure on $\mfX_{\Q_p}$ which is compatible with $\mfT_{\Q_p}^\dagger$. 
Locally an object of this site is given by a pair $(\Spf R,\Spa R_K^\dagger)$ with $R\simeq \mcO(\mfT)\langle X\rangle/(f)$ smooth over {$\mcO(\mfT)$} and $R_K^\dagger\simeq \mcO^\dagger(\mfT_{\Q_p}^\dagger)\langle X\rangle ^\dagger/(f)$ a smooth dagger 
{algebra over $\mcO^\dagger(\mfT_{\Q_p}^\dagger$)} {(see \cite[Proposition 3.6]{LBV})}. We may and do construct a category of motives $\FDA^\dagger(\mfT)$ out of this site like in \Cref{dfn:motives} (the interval object being $(\A^1_{\mfT},\B^{1\dagger}_{\mfT_{\Q_p}^\dagger})$), and we claim that the natural functor $\FDA^\dagger(\mfT)\to \FDA(\mfT)$ is an equivalence. As any smooth rigid variety over $\mfT_{\Q_p}$ can be locally equipped with a compatible dagger structure, this functor sends a set of  generators to a set of  generators.

Full faithfulness is a formal consequence of \cite[Proposition 4.22]{vezz-MW} which gives  an equivalence between the complexes  (see \emph{loc. cit.} for the  notation $\Sing$)
$$
\Sing^{\B^{1\dagger}}(\Q_{\mfT^\dagger_{\Q_p}}(\mfX^\dagger_{\Q_p}))(\mfX'^\dagger_{\Q_p})\simeq  \Sing^{\B^1}(\Q_{\mfT_{\Q_p}}(\mfX_{\Q_p}))((\mfX'_{\Q_p})).
$$
By identifying maps between bounded Tate affinoid pairs with  maps between their subrings of integral elements (that is, if we take sections of the sheaf $\mcO^+$ on the varieties above)   we deduce a quasi-isomorphism between the complexes
$$
\Sing^{(\A^1,\B^{1\dagger})}(\Q(\mfX,\mfX^\dagger_{\Q_p}))((\mfX',\mfX'^\dagger_{\Q_p}))\simeq  \Sing^{\A^1}(\Q(\mfX))((\mfX'))
$$
where we use now $\mcO^+(\mfX_{\Q_p})\simeq R$ (cfr. \cite[Tag 03GC]{stacks-project}).
As in Step 4 of the proof of \cite[Theorem 3.9]{LBV} (or \cite[Theorem 4.23]{vezz-MW}) this implies that the categories of effective motives $\FDA^{\eff\dagger}(\mfT)$ and $\FDA^{\eff}(\mfT)$ are canonically equivalent (via the obvious functor induced by forgetting the dagger structure) which in particular implies the claimed equivalence $\FDA^\dagger(\mfT)\simeq \FDA(\mfT)$.

{\it Step 2:} We now closely follow the proof of the equivalence between the Monsky--Washnitzer and rigid cohomology which is given in \cite[Section 6.5]{lestum2}, and we show that we can assume that the objects of $\Sm^\dagger/\mfT$ are geometric materializations (in the sense of \cite[Definition 1.2]{lestum2}) over the base space.

Let $\Sm/(\mfT,{\mcV})$ be the category of %
    geometric materializations $(\mfX,{\mcW})\colonequals (\mfX,\mfQ,{\mcW})\to (\mfT,\mfP,{\mcV})$ with maps considered in the category of overconvergent spaces, in sense of \cite[\S 1.1]{lestum2} (that is, up to strict neighborhoods). %
    In particular, the role of the middle formal scheme  is only auxiliary, in the sense that any map of extendable pairs $(\mfX,{\mcW})\to (\mfX',{\mcW}')$ can be extended to a map of triples $(\mfX,\mfQ,{\mcW})\simeq (\mfX',\mfQ\times\mfQ',{\mcW})\to (\mfX',\mfQ',{\mcW}')$ (cfr. \cite[Page 101]{lestum2}). Also the role of ${\mcW}$ is  auxiliary as it can be replaced with any  strict neighborhood of $]\mfX[_{\mcW}$, and can hence be replaced with the datum of a dagger structure on the tube $]\mfX[_{\mcW}$, compatible with the one of $\mfT^\dagger_{\Q_p}$ (see \cite[Definition 5.1]{lestum1}). %

    As such, we deduce that there is a canonically commutative diagram 
    $$\xymatrix{
    \Sm/(\mfT,{\mcV}) \ar[d]\ar[r] & \Sm/\mfT \ar[d] \\
    \Sm^\dagger/\mfT^\dagger_{\Q_p}\ar[r] & \Sm/\mfT_{\Q_p}
    }
    $$
    where the left vertical map is given  by $(\mfX,{\mcW})\mapsto []\mfX[_{\mcW}\Subset {\mcW}]$, and the other maps are the natural ``forgetful'' functors. 
    This induces a (continuous) fully faithful functor $\Sm/(\mfT,{\mcV})\to \Sm^\dagger/\mfT$, where the essential image is given by  rigid varieties over $\mfT_{\Q_p}$ that have a smooth formal model over $\mfT$, equipped with a dagger structure over $\mfT_{\Q_p}^\dagger$ which comes from a geometric materialization over $(\mfT,\mfP,{\mcV})$. We now show that any object in $\Sm^\dagger/\mfT$ is Zariski-analytically locally of this kind, which implies that $\FDA^{\eff\dagger}(\mfT)$ can be equivalently defined as a localization of $\Sh_{\et}(\Sm/(\mfT,\mcV),\Q)$. 

    To do so, we fix an object  $(\Spf R,\Spa R_K^\dagger)$ as in Step 1 and show that it is extendable %
    to a geometric materialization over the base $(\mfT,\mfP,{\mcV})$. We may and do write $R$ as the $p$-adic completion of a smooth $A$-algebra $B=A[X_1,\ldots, X_M]/(f)$. We can consider the $p$-adic completion $\mfQ$ of the  closure of $\Spec B$ in $\P^M_A$ and the rigid variety $\mcW$ given by $(\Spec B_K)^{\an}$. The triple $(\Spf R, \mfQ,\mcW )$ is then a geometric materialization  over $(\mfT,\mfP,\mcV)$, inducing the smooth map of dagger varieties $\Spa R_K^\dagger\to\Spa \widehat{A}_K^\dagger$, as claimed.  

{\it Step 3:} We now prove the claim of the proposition. 
    By construction,  the functor
    $$
 \Sm/(\mfT,\mcV)\to \RigDA(\mfT_{\Q_p})\simeq \RigDA(\mfT_{\Q_p}^\dagger)\to \QCoh(\mfT_{\Q_p}^\dagger)\xto{u}\Sh(\mcV,j^\dagger\mcO)^{\op}
    $$
   can be identified with the functor $(\mfX,\mcW)\mapsto f_*j^\dagger\Omega_{/\mfT'_{\Q_p}}$, where $f$ denotes the structural map $]\mfX[_{\mcW}^\dagger\to \mfT_{\Q_p}^\dagger$.  %
   Indeed, if we let $\mcX_h$ [resp. $\mcT_h$] be cofinal neighborhoods $\mfX_{\Q_p}\Subset_{\mfT_{\Q_p}'}\mcX_h\subset \mcW$ [resp. cofinal neighborhoods $\mfT_{\Q_p}\Subset_{\mcV}\mcT_h\subset \mcV$] defining the dagger structures on $\mfX_{\Q_p}$ [resp. $\mfT_{\Q_p}$],  for any rational affinoid open $\mcU$ of $\mcV$, we can compute the sections on $\mcU$ of the two sheaves obtaining the following functorial equivalences %
    \begin{align*}
        R\Gamma(\mcU,f_*j^\dagger \Omega_{/\mcV})\simeq R\Gamma(f^{-1}\mcU,j^\dagger\Omega_{/\mcV})&\simeq \varinjlim \Omega^\bullet_{\mcX_h\times_{\mcV}\mcU/\mcV}\\&\simeq \varinjlim \map(\one,\underline{\Omega}^\bullet_{\mcX_h\times_{\mcV}\mcU/\mcT_h})\\&\simeq 
        \varinjlim \map(\one,j^*\underline{\Omega}^\bullet_{\mcX_h/\mcT_h})\\
        &\simeq
        R\Gamma(\mcU,u(\dR_{\mfT_{\Q_p}^\dagger}(]\mfX[^\dagger_{\mcW})))
   \end{align*}
   (see \Cref{rmk:computeOmega0}, \Cref{rmk:computeOmega} and \Cref{prop:Berth-real}).
   
   The same is true for the functor 
    $$
     \dR_B\colon \Sm/(\mfT,\mcV)\to %
     \Sh(\mcV,j^\dagger\mcO)^{\op}
    $$
    since the relative (dagger) de Rham complex of a fixed geometric materialization canonically computes relative rigid cohomology of its special fiber, by means of \cite[Corollary 6.17]{lestum2}\footnote{This also follows from Berthelot's original definition of relative rigid cohomology, and the extension by Chiarellotto--Tsuzuki, see \cite[\S 10.6]{chiar-tsu}}. 
    
    In particular, there is an invertible natural transformation between the two functors 
    $$
    \Sm/(\mfT,\mcV)\rightrightarrows \Sh(\mcV,j^\dagger\mcO)^{\op}
    $$
which (Kan) extends to $\FDA^{\eff\dagger}(\mfT)$, and then to $\FDA^{\dagger}(\mfT)\simeq \DA(T)$, by their universal property (see \Cref{rmk:UP}). 
\end{proof}
}
{
\begin{rmk}
 We  note that the above proof shows in particular that the absolute (motivic) Monsky--Washnitzer  cohomology (i.e. over the frame $(\Spec k, \Spf \mcO,\Spa K)$) as defined in \cite{vezz-MW}  agrees with Berthelot's rigid cohomology.  The proof is essentially a restatement of \cite[Section 6.5]{lestum2}.
\end{rmk}
}

{
\begin{rmk}One can prove directly that the complex computing $\dR_B(X/(T,\mfP,\mcV))$ by using the formula of \cite[\S 10.6]{chiar-tsu} for a compact smooth variety $X/T$ agrees with the one of $u\circ\dR$ as follows. One can take a (finite) Zariski open cover $\{U_i\subseteq X\}_{i=1,\ldots,n}$ of $X$, such that each $U_i$ can be extended into a triple $(U_i,\mfQ_i,\mcW_i)$ above $(T,\mfP,\mcV)$ (see Step 2 of the proof above, or cfr. \cite[Th\'eor\`eme 3.3.4]{arabia}). Each pluri-intersection $U_{I}=\bigcap U_i$ can be extended into a triple $(U_I,\mfQ_I,\mcW_I)$ by taking (fibered) products in the category of triples, and $\dR_B(X/(T,\mfP,\mcV))$ is, by definition, the homotopy limit (actually, a finite limit) of the (bounded) Cech diagram $\Omega_{]U_I[_{\mcW_I}^\dagger/]T[_{\mcV}^\dagger}$ obtained by applying the de Rham realization functor to the diagram (of dagger varieties) $]U_\bullet[^\dagger_{\mcW_\bullet}$ indexed by $\Delta_+^{\op}$ (actually, $\Delta_+^{\leq n,\op}$). Using the strong fibration theorem, each map in this diagram is homotopic to the  map given by the natural inclusion $]U_I[^\dagger_{\mcW_{I'}}\subseteq ]U_{I'}[^\dagger_{\mcW_{I'}}$.  The diagram $]U_\bullet[_{\mcW_\bullet}^\dagger$ is then a straightening of the diagram in $\RigDA(]T[_{\mcP}^\dagger)$ induced by the diagram $U_\bullet$ in $\DA(T)$ via the    Monsky--Washnitzer functor $\MW\colon\DA(T)\to\RigDA(]T[_{\mcP}^\dagger)$. As such, its homotopy colimit agrees with $\MW(X)$, and its de Rham realization with $u(\dR(\Q_T(X))$.
\end{rmk}
}

{
We can extend the previous proof to more general frames $(S,\mfP,\mcV)$, the obstacle being, that we need to find ``algebraizations'' of smooth maps between formal schemes in order to build relative ``Monsky--Washnitzer frames''. We will do that, as it is  often the case in the classical literature, by working locally on the formal scheme $\mfP$  (by taking an affine open subscheme $\mfU$). Even when the original frame is proper, this localized one $(S\times_{\mfP_{k}}\mfU_k,\mfU,\mcV\times_{\mfP_{\Q_p}}\mfU_{\Q_p})$ will no longer be proper, as the associated inclusion of rigid spaces (the tube of the scheme $S\times_{\mfP_{k}}\mfU_k$ in $\mfU_{\Q_p}$) may no longer be strict. In order to prove the desired comparison, this phenomenon forces us to consider a more general notion of dagger varieties that takes into account this weaker setting.

\begin{dfn}\label{dfn:pseudo}
    An [affinoid] pseudo-dagger structure is the datum of an open immersion  $\mcX\subseteq \mcX'$ of [affinoid] rigid analytic varieties over $K$. This induces a pro-rigid variety $\mcX\Subset_{\mcX'}\mcX_h\subset \mcX'$ which gives  a presentation of the relative compactification $\mcX^{/\mcX'}$ (see \cite[Proposition A.14]{vezz-MW}) and, in the affine case, a dense subring $\mcO^\dagger(\mcX^\dagger)=\bigcup \mcO(\mcX_h)$ of $\mcO(\mcX)$.  
    {A smooth morphism [resp. étale morphism, resp. open immersion] of pseudo-dagger varieties $[\mcX\subseteq \mcX']\to[\mcS\subseteq \mcS']$ is a smooth map [resp. étale map, resp. open immersion]   $\mcX\to \mcS$}
    for which $\mcX\Subset_{\mcS'}\mcX'$, together with a compatible  map of pro-objects admitting a straightening made of maps that are smooth [resp. \'etale, resp. open immersions]. 
\end{dfn}

\begin{rmk}\label{rmk:pseudo}
    Even in the pseudo-dagger situation, one can define motivic categories $\RigDA(\mcX^\dagger)$ (as in \Cref{dfn:motives}), (solid) quasi-coherent modules $\QCoh(\mcX^\dagger)$ (as in \Cref{dfn:D(dag)}) and overconvergent sheaves $\Sh(\mcX',j^\dagger\mcO)$ (as in \Cref{dfn:Sh(j)}) over a chosen affinoid pseudo-dagger variety $\mcX^\dagger=[\mcX\subseteq \mcX']$. 
    Note that an open immersion $\mcY'\subseteq \mcX'$ induces a new pseudo-dagger variety $\mcY=\mcX\cap \mcY'\subseteq \mcY'$ and that an analytic cover $\{\mcY'_i\subseteq \mcX'\}$ induces an analytic cover $\bar{\mcY_i}\subseteq\bar{\mcX}$ so that the category $\Sh(\mcX',j^\dagger\mcO)$ has descent with respect to the analytic site of $\mcX'$. 
\end{rmk}

\begin{rmk}
    If the inclusion $\mcX\Subset \mcX'$ is strict, then the categories of smooth dagger varieties, motives and solid modules agree with the ones introduced in \Cref{sec:oc}. On the opposite case, if $\mcX=\mcX'$ is the identity  map, then the category of smooth dagger varieties over $\mcX^\dagger$ is the one of smooth varieties over $\mcX$, so that $\RigDA(\mcX^\dagger)=\RigDA(\mcX)$, and the category $\QCoh(\mcX^\dagger)$ is $\QCoh(\mcX)$. 
\end{rmk}
}

{
\begin{prop}\label{referee2}\label{prop:BvsCS}
Let $(\mfT\subseteq \mfT')$ be an open {immersion} 
of reduced formal schemes such that $\mfT'_{\Q_p}$ is a reduced rigid analytic variety, and let $\mfT_{\Q_p}^\dagger$ be the induced pseudo-dagger structure on $\mfT_{\Q_p}$. 
There is an equivalence $ \RigDA(\mfT^\dagger_{\Q_p})\simeq\RigDA(\mfT_{\Q_p})$ giving rise to a  functor
    $$\DA(\mfT_\sigma)\xto{\dR_{{\rig}}} \QCoh(\mfT^\dagger_{\Q_p})^{\op}\xto{u} \Sh(\mfT'_{\Q_p},j^\dagger\mcO)^{\op}.$$
   which is equivalent to the relative rigid-cohomology functor 
    $$
    \DA(\mfT_\sigma)\to \Sh(\mfT'_{\Q_p},j^\dagger\mcO)^{\op}
    $$
    with respect to the frame $(\mfT_\sigma,\mfT',\mfT'_{\Q_p})$ as defined by Berthelot (see \cite{chiar-tsu}).
\end{prop}

\begin{proof}We decompose the proof into several steps.
\\
{\it Step 1:} We first consider the case in which $\mfT\subset\mfT'$ is an open inclusion between affinoid $p$-adic formal schemes, with $\mcO(\mfT')=\mcO^+(\mfT'_{\Q_p})$. 
We  may consider the site $\Sm^\dagger/\mfT\colonequals (\Sm^\dagger/\mfT^\dagger_{\Q_p}\times_{\Sm/\mfT_{\Q_p}}\Sm/\mfT, \et)$. Its objects are given by pairs $(\mfX,\mfX^\dagger_{\Q_p})$ where $\mfX$ is smooth over $\mfT$ and $\mfX_{\Q_p}^\dagger$ is a chosen pseudo-dagger structure on $\mfX_{\Q_p}$ which is compatible with $\mfT_{\Q_p}^\dagger$. As in \cite[{Proposition 3.6}]{LBV}, locally an object of this site is given by a pair $(\Spf R,\Spa R_K^\dagger)$ {such that } 
$ R\simeq \mcO(\mfT)\langle X\rangle/(f)$ is smooth over $\mcO(\mfT)$ and   $R_K^\dagger\simeq \mcO^\dagger(\mfT_{\Q_p}^\dagger)\langle X\rangle ^\dagger/(f)$ is the ring of functions of a smooth pseudo-dagger space 
over $\mfT_{\Q_p}^\dagger$ induced by the inclusion $\B^n_{\mfT_{\Q_p}}\Subset_{\mfT'_{\Q_p}}\A^N_{\mfT'_{\Q_p}}$. We may and do construct a category of motives $\FDA^\dagger(\mfT)$ out of this site, and we conclude 
by adapting the proof in Step 1 of \Cref{referee1}, 
that the natural functors $\RigDA(\mfT_{\Q_p}^\dagger)\to \RigDA(\mfT_{\Q_p})$ and  $\FDA^\dagger(\mfT)\to \FDA(\mfT)$ are equivalences.

{\it Step 2:} We still assume $\mfT$, $\mfT'$ to be affinoid  with $\mcO(\mfT')=\mcO^+(\mfT'_{\Q_p})$. As in Step 2 of \Cref{referee1}, we now replace $\Sm^\dagger/\mfT$ with objects of the category of overconvergent spaces, in the sense of Le Stum. 
Let $\Sm/(\mfT,\mfT'_{\Q_p})$ be the subcategory of 
    geometric materializations $(\mfX,\mcW)\colonequals (\mfX,\mfX',\mcW)\to (\mfT,\mfT',\mfT'_{\Q_p})$ in  the category of overconvergent spaces, in the sense of \cite[Definition 1.2]{lestum2} for which $\mfX\to \mfT$ is smooth and $]\mfX[_W\simeq \mfX_{\Q_p}$.

    We now show that any object in $\Sm^\dagger/\mfT$ is Zariski-analytically locally of this kind, which implies that $\FDA^{\eff\dagger}(\mfT)$ can be equivalently defined as a localization of $\Sh_{\et}(\Sm/(\mfT,\mfT'_{\Q_p}),\Q)$. 
    To do so, we let  $\mfT=\Spf A$, $\mfT'=\Spf A'$ and we fix a generic affinoid object $(\mfX=\Spf \widehat{R},\mfX_{\Q_p}^\dagger=\Spa {R}_{\Q_p}^\dagger)$ in $\Sm^\dagger/\mfT$ 
    where we can assume that $\widehat
    R$ is the $p$-adic formal completion of a smooth $A$-algebra $R$ with a presentation  $R\simeq A[X_1,\ldots,X_M]/(f_1,\ldots,f_m)$, and ${R}^\dagger_{\Q_p}$ is given by $\mcO^\dagger(\mfT_{\Q_p})\langle X\rangle^\dagger/(f)$ that is, the pseudo-dagger structure induced by the inclusion of $\mfX_{\Q_p}$ in the relative analytification (see e.g. \cite[Construction 1.1.15]{agv})  $\Spec R[1/p]\times_{\Spec A'[1/p]}\mfT'_{\Q_p}$. 
    
    Note that the chosen presentation leads to an embedding $\Spec R\subset \P^M_{A'}$. We denote by $\mathfrak{V}$ the $p$-adic completion of the schematic closure of the induced morphism $\Spec R\subset\P^M_{A'}$. We then have a geometric materialization (cfr. \cite[Example 5 at page 14]{lestum2})
    $$
    (\Spf \widehat{R},\mathfrak{V},\Spec R\times_{\Spec A'}\mfT'_{\Q_p})\to (\mfT,\mfT',\mfT'_{\Q_p})
    $$
    which corresponds to the object $(\mfX,\mfX_{\Q_p}^\dagger)$, as claimed. Using Step 3  of \Cref{referee1}, we then conclude the statement for the inclusions $\mfT\subseteq\mfT'$ of the prescribed form.

     {\it Step {3}:}  
     We now prove the claim for a general {open inclusion} 
     $\mfT\subseteq\mfT'$ giving rise to the overconvergent space $F=(\mfT,\mfT',\mcV=\mfT'_{\Q_p})$. 

Take the category $\MW_{/F}$ consisting of overconvergent spaces $(\mfU,\mfU',\mcW)$ over $(\mfT,\mfT',\mcV)$ for which $\mfU'$ is a  $p$-adic affine formal schemes with $\mcO^+(\mfU'_{\Q_p})=\mcO(\mfU')$,   $\mcW=\mfU'_{\Q_p}$ open in $\mcV$, and $\mfU$ an open affinoid subscheme of $\mfT\times_{\mfT'}\mfU'$. Note that the adic space $\mcV$ is locally of the form $\Spa(R,R^\circ)=(\Spf R^\circ)_{\Q_p}$ for the open and bounded $p$-adically complete subring $R^\circ$ of $R$ (as $\mcV $ is reduced, see \cite[Theorem 6.2.4/1]{BGR}) so that  the elements of $\MW_{/F} $ induce (jointly surjective)  open immersions $]\mcU[_{\mcW}\subseteq ]\mfT[_{\mcV}$, $\mcW\subseteq\mcV$ (it is a h-adic cover of $F$ in the sense of \cite[End of \S 4.3]{lestum1}).

For each $(\mfU,\mfU',\mcW)\in\MW_{/F}$, both compositions {(that is for $u\circ\dR$ and for $\dR_B$)}
$$
\DA(\mfT_\sigma)\rightrightarrows \Sh(\mfT'_{\Q_p},j^\dagger\mcO)^{\op} \to \Sh(\mfU'_{\Q_p},j^\dagger\mcO)^{\op}
$$
coincide with the functors
$$
\DA(\mfT_\sigma)\to \DA(\mfU_\sigma)\rightrightarrows  \Sh(\mfU'_{\Q_p},j^\dagger\mcO)^{\op}
$$
(for $\dR_B$, see e.g. \cite[Propositions 3.1.13 and 7.4.12]{lestumbook}). We then deduce from the previous step, that they are both identified with the functor
\begin{equation}
    \label{dRf}
\DA(\mfT_\sigma)\to \DA(\mfU_\sigma)\simeq \FDA^\dagger(\mfU)\xto{f_*j^\dagger\Omega_{/\mfU_{\Q_p}'}}\Sh(\mfU'_{\Q_p},j^\dagger\mcO)^{\op},
\end{equation}
{where as above $f$ denotes the structural morphism. }

{We now show that $u\circ\dR$ and $\dR_B$ also agree as \emph{functors} on the category $\MW_{/F}$, i.e. that their common description as  \eqref{dRf} is also compatible with their respective pull-back functors along maps   $g\colon (\tilde
\mfU,\tilde{\mfU}',\tilde{\mcW})\to (\mfU,\mfU',\mcW)$ in $\MW_{/F}$. Note that these maps $g$  are cartesian in the sense of \cite[Definition 5.18]{lestum2}, and generically open immersions.

  To see this, we point out that there are essentially commutative diagrams (of $1$-categories): 
$$
\xymatrix{  \Sm/(\mfU,\mfU_{\Q_p}')\ar[rr]^-{f_*j^\dagger\Omega_{/\mfU_{\Q_p}'}}\ar[d]^{g^*} && \Ch(\Sh(\mfU_{\Q_p}',j^\dagger\mcO))^{\op}\ar[d]^{g^*}\\
\Sm/(\tilde{\mfU},\tilde{\mfU}_{\Q_p}')\ar[rr]^-{f_*j^\dagger\Omega_{/\tilde{\mfU}_{\Q_p}'}} &&\Ch( \Sh(\tilde{\mfU}_{\Q_p}',j^\dagger\mcO))^{\op}
}
$$
which induce a map between functors 
$$\Sm/(-)\to \Ch(\Sh(-,j^\dagger\mcO))^{\op}\to\mcD(\Sh(-,j^\dagger\mcO))^{\op}$$ 
on $\MW_{/F}$. This formally extends to a map of functors of \icats $\Psh(\Sm/(-),\Q)\to \Sh(-,j^\dagger\mcO)^{\op}$  and thus to a map $\FDA^\dagger(-)\to \Sh(-,j^\dagger\mcO)^{\op}$ (cfr. \Cref{rmk:UP}).  We remark that this procedure recovers the pull-back functors of   $u\circ\dR$, up to the equivalence $\DA(-_\sigma) \simeq \FDA^\dagger(-)$ (cfr. Step  3 of \Cref{referee1} and Step 2 of the proof of \Cref{prop:dRdaggerismotivic} for the identification of base change maps) and also, by construction, the ones  of Berthelot's rigid cohomology. }

We then deduce that both  $u\circ\dR$ and $\dR_B$ on $\DA(\mfT_\sigma)$ can be described as the functor
$$
\DA(\mfT_\sigma)\to \lim_{\MW_{/F}}\Sh(\mfU'_{\Q_p},j^\dagger\mcO)^{\op}\simeq \Sh(\mfT_{\Q_p}',j^\dagger\mcO)^{\op}
$$
induced by the functors in \eqref{dRf} {(and their canonical pull-back maps)}, and in particular they can be identified, as claimed (for the equivalence in the equation, use \Cref{rmk:descent_of_Sh} and \Cref{rmk:pseudo}).
\end{proof}
}

{

\begin{rmk}
    One could prove the equivalence $\FDA^\dagger(\mfT)\simeq \DA(\mfT_\sigma)\simeq\FDA(\mfT)$  also by adapting the proof of \cite[Corollaire 1.4.24]{ayoub-rig} to weak-formal schemes (in the sense of \cite{LM}). Alternatively, one can also prove the equivalence $\FDA^\dagger(\mfT)\simeq \FDA(\mfT)$ using the approximation results of Arabia \cite{arabia} instead of the implicit function theorem of \cite[Appendix A]{vezz-fw} (used in \cite{vezz-MW, LBV}) as follows. We fix the (weakly complete) ring $R^\dagger= \mcO(\mfT)\cap \mcO^\dagger(\mfT_K)$ and consider the category of smooth weakly complete $R^\dagger$-algebras as an avatar of (the \'etale site on) $\Sm^\dagger/\mfT$, while smooth $R=\mcO(\mfT)$-algebras form an avatar of $\Sm/\mfT$. As in the proof of \cite[Proposition 4.22]{vezz-MW} (or \cite[Theorem 3.9]{LBV}) we need to show that, for any smooth weakly complete $R^\dagger$-algebras $A^\dagger, B^\dagger$ with completions $A$ resp. $B$, and for any finite set of maps $\{f_1,\ldots,f_N\}$ in $\Hom_{R}(A,B\langle \tau_1,\ldots,\tau_n\rangle)$ we can find corresponding maps $\{H_1,\ldots,H_N\}$ in $\Hom_{R}(A,B\langle \tau_1,\ldots,\tau_n,\chi\rangle)$ %
	 such that:
	\begin{enumerate}
		\item For all $1\leq k\leq N$ it holds $i_0^*H_k=f_k$ and $i_1^*H_k$ has a model in $\Hom_{R^\dagger}(A^\dagger,B^\dagger\langle \tau\rangle^\dagger)$; %
		\item if $d_{r,\epsilon}\circ{f}_k=d_{r,\epsilon}\circ{f}_{k'}$ for some $1\leq k,k'\leq N$ and some $(r,\epsilon)\in\{1,\ldots,n\}\times\{0,1\}$ then $d_{r,\epsilon}\circ H_k=d_{r,\epsilon}\circ H_{k'}$;
		\item if for some $1\leq k\leq N$  the map $d_{1,1}\circ f_k\in \Hom(A,B\langle \tau_2,\ldots,\tau_n\rangle)$  has a model in $\Hom(A^\dagger, B^\dagger\langle \tau\rangle^\dagger)$ %
		then the element $d_{1,1}\circ H_k$ of $\Hom(A,B\langle \tau_2,\ldots,\tau_n,\chi\rangle)$ is constant on $\chi$,  equal to $d_{1.1}\circ f_k$;
	\end{enumerate}
{	where we denote by $d_{r,\varepsilon}$ the morphisms  induced by the evaluation of the $r$-th coordinate of $\tau_r$ at $\varepsilon$. }

This can be done by means of  \cite[Théorème 2.1.2 and Proposition 3.2.5]{arabia} by considering the colleciton of the maps $f_i$'s as a single map from $A$ to the  subring of the product $ (B\langle\tau\rangle)^n$ defined by the conditions in (2) and (3). We leave the details to the reader.
 \end{rmk}
}

{
\begin{rmk}
In case $\mfT'_{\Q_p}$ is smooth, then the dagger variety $\mfT_{\Q_p}^\dagger$ can be covered with Monsky--Washnitzer frames in the sense of \Cref{referee1}, and the proof reduces to that case, by gluing (as in Step 4 of \Cref{referee2}).    
\end{rmk}
}

{
\begin{cor}
    Let $(T\subseteq\bar{T}\subset\mfP)$ be a proper reduced $W(k)$-frame in the sense of Berthelot. 
There is an equivalence between the functor 
    $$\DA(T)\xto{\dR} \QCoh(]T[_{\mfP}^\dagger)^{\op}\xto{u} \Sh(]\bar{T}[_{\mfP},j^\dagger\mcO)^{\op}$$
     and the relative rigid-cohomology functor 
    $$
    \DA(T)\to \Sh(]\bar{T}[_{\mfP},j^\dagger\mcO)^{\op}
    $$
    defined by Berthelot (see \cite{chiar-tsu}).
\end{cor}
\begin{proof}As $\mfP$ is reduced and topologically of finite type over (the excellent ring) $W(k)$, any completion and any admissible formal blow up of it is again reduced. In particular, $]\bar{T}[_{\mfP}$ is reduced. 
    It suffices to apply \Cref{prop:BvsCS} to the open inclusion of formal schemes $\mfT\subset \mfP^{\wedge\bar{T}}$ in which $\mfT$ is the model of $T$ along the homeomorphism $|\bar{T}|\simeq |\mfP^{\wedge\bar{T}}|$ (see \Cref{example: frames} and \cite[End of Definition 4.14]{lestum1}).
\end{proof}
}

\begin{rmk}\label{rmk:warning}
    If the chosen frame gives rise to an affinoid dagger variety $\mcT^\dagger$, it makes sense to consider the solid modules $H^i\dR(X/\mcT^\dagger)$. {Even if we proved a comparison at the level of complexes,} it is not a priori clear that their underlying $\mcO(\mcT^\dagger)$-modules agree with Berthelot's rigid cohomology groups {$R^i\Gamma_B$}  as the ``underlying'' functor $u$ may not be {t-}exact. 
\end{rmk}

The following corollary settles (one version of) Berthelot's conjecture.

\begin{cor}\label{conj!}
    \Cref{conj:berth-2} holds true. More precisely: let $f\colon X\rightarrow S$ be a proper and smooth morphism of algebraic varieties over $k$. 
    Then for any $q\geq0$ and any proper frame $(T,\overline{T}, \mfP)$ over $S$, 
    the module
    $$
        R^qf_{\rig\ast}(X/(T,\overline{T},\mfP))
    $$
    is the realization on $(T,\overline{T}, \mfP)$ of a canonical  overconvergent $F$-isocrystal over $S$, given by the coherent module underlying $H^q(\dR_{\rig}(X/(T,\overline{T},\mfP))$. In particular, they are vector bundles on $]T[_{\mfP}^\dagger$.
\end{cor}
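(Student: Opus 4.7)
The plan is to feed the dualizable motive attached to $f$ into the machinery already constructed. Since $f$ is smooth and proper, Remark~\ref{rmk:dualinDA} gives $f_*\one\in\DA(S)_{\dual}$ (and similarly for $f_!f^!\one = \Q_S(X)$). I apply the realization
$$
\dR^{\vee\varphi}_{\rig}\colon \DA(S)_{\dual}\to \Bigl(\lim_{\Ad^{\dagger}_{/S}}\Vect(\mfT_{\Q_p}^\dagger)\Bigr)^{\varphi}
$$
of Corollary~\ref{cor:Frobenius_structures} to this motive. By Remark~\ref{rmk:down-to-earth2}, the result unpacks into a family of vector bundles $\{\mcE_{\mcT^\dagger}\}$ indexed by proper frames, endowed with pullback isomorphisms satisfying the cocycle condition and a Frobenius structure $\Phi\colon \mcE\simeq \varphi\mcE$. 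Under Proposition~\ref{prop:coh=coh}, which matches coherent $\mcO^\dagger$-modules on $\mcT^\dagger$ with coherent $j^\dagger\mcO$-modules on $\mcT_0$, this is precisely the datum of an overconvergent $F$-isocrystal on $S$ in the sense of Definition~\ref{dfn:isocrystals}. Taking $H^q$ componentwise produces the candidate $R^qf_{\rig\ast}(X/S)\in F\text{-}\Isoc^\dagger(S/K)$, realized on each frame by a vector bundle on $]T[_{\mfP}^{\dagger}$.

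The remaining step is to identify, for each proper frame $(T,\overline{T},\mfP)$, the realization of this isocrystal with Berthelot's relative rigid cohomology $R^qf_{\rig\ast}(X/(T,\overline{T},\mfP))$. Proposition~\ref{prop:BvsCS} already produces an equivalence $Rf_{\rig\ast}(X/(T,\overline{T},\mfP))\simeq u(\dR_{\rig}(X/\mcT^\dagger))$ in $\Sh(\mcT_0,j^\dagger\mcO)$, so only the commutation of $H^q$ with $u$ remains. This is where one must be careful: as warned in Remark~\ref{rmk:warning}, the functor $u$ is not in general exact. I would bypass this as follows: because the motive is dualizable, $\dR_{\rig}(X/\mcT^\dagger)$ is a perfect complex in $\QCoh^{\dagger}(\mcT^\dagger)$, and by Corollary~\ref{cor:RGammadagger} each of its cohomology modules is finite projective over $\mcO^\dagger(\mcT)$. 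A bounded complex of finitely generated projective modules whose cohomology is termwise projective splits as $\bigoplus H^i[-i]$. Applying Example~\ref{rmk:BvsCS2} to this split model gives
$$
H^q\bigl(u(\dR_{\rig}(X/\mcT^\dagger))\bigr)\cong u\bigl(H^q(\dR_{\rig}(X/\mcT^\dagger))\bigr),
$$
identifying $R^qf_{\rig\ast}(X/(T,\overline{T},\mfP))$ with the coherent module underlying $H^q(\dR_{\rig}(X/(T,\overline{T},\mfP)))$, which is a vector bundle on $]T[_{\mfP}^\dagger$ by construction.

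The assembly is essentially formal once the constructions of Sections~\ref{sec:oc} and~\ref{sec:b} are in place. The only nontrivial point is the splitting argument used to move $H^q$ past the non-exact functor $u$; this is the sole genuine obstacle, and it is handled cleanly by the finite projectivity statement built into Corollary~\ref{cor:RGammadagger}. All the deep inputs---$\A^1$-homotopy, \'etale descent of $\dR_{\rig}$, compatibility with Frobenius, and the comparison with Berthelot's sheaves---have already been established.
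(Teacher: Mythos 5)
Your proof is correct and takes essentially the same route as the paper: apply $\dR^{\vee\varphi}_{\rig}$ to $f_*\one$, identify the output as an overconvergent $F$-isocrystal via \Cref{rmk:down-to-earth2} and \Cref{prop:coh=coh}, and then use \Cref{prop:BvsCS} plus a splitting argument to move $H^q$ past $u$ and match with Berthelot's groups. Your only addition is that you spell out the splitting lemma (a bounded complex of finitely generated projectives with projective cohomology is split) where the paper merely says ``because the complex locally splits''; since the $\mcT^\dagger$ coming from a general proper frame need not be affinoid, you should phrase this step as being checked on an affinoid cover of $\mcT^\dagger$ --- as the paper's word ``locally'' indicates --- before invoking \Cref{cor:RGammadagger} and \Cref{rmk:BvsCS2}, which are stated over a dagger affinoid base.
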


\begin{proof}
    If one applies the functor $R\Gamma_{\rig}^\varphi$ of \Cref{cor:Frobenius_structures} to the motive $f_*\one\in\DA(S)_{\dual}$ (see \Cref{rmk:dualinDA}), one deduces the existence of an object $\dR^\varphi_{\rig}(f_*\one)$ in $\lim\Vect(\mfS^{\dagger}_{\Q_p})^\varphi$. This gives rise to a sequence of overconvergent $F$-isocrystals $H^i_{\dR^\dagger}(T/S)$ (see \Cref{rmk:down-to-earth} and \Cref{prop:coh=coh})   each of which computes  Berthelot's $i$-th rigid cohomology group when specialised on any chosen frame  over $S$, according to \Cref{prop:BvsCS} and \Cref{rmk:BvsCS2}. 
    Because the complex locally splits, we do get an actual comparison between the homotopy groups of \Cref{rmk:warning}, and we deduce that $R^qf_{\rig\ast}(X/(T,\overline{T},\mfP))$ is a vector bundle on the base.
 \end{proof}

As the category of overconvergent isocrystals can be alternatively described in terms of modules with an overconvergent integrable connection over a single fixed smooth proper frame, we can rephrase the previous result in terms of the overconvergence of the Gau\ss--Manin connection. 

\begin{cor}
    Under the assumptions of \Cref{conj!},   for any smooth proper frame $(T,\overline{T}, \mfP)$ over $S$, the Gauß--Manin connection on $R^qf_{\rig\ast}(X/(T,\overline{T},\mfP))$ is overconvergent. 
\end{cor}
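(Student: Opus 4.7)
The plan is to deduce the corollary from \Cref{conj!} by invoking the classical reformulation of overconvergent isocrystals in the presence of a smooth proper frame.

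More precisely, I would first recall the standard fact (see e.g.\ \cite[Theorem 8.3.10]{lestumbook}) that, whenever $S$ admits a smooth proper $W(k)$-frame $(T,\overline{T},\mfP)$, the functor sending an overconvergent isocrystal $\mcE\in\Isoc^\dagger(S/K)$ to its realization $\mcE_{\mcT^\dagger}$ on $\mcT^\dagger=[\,]T[_\mfP\Subset\,]\overline{T}[_\mfP]$, together with the canonical stratification extracted from the transition isomorphisms $\tau_{p_1}$, $\tau_{p_2}$ relative to the two projections from (a frame associated with) an infinitesimal neighbourhood of the diagonal, is an equivalence with the category of coherent $j^\dagger\mcO_{]\overline{T}[_\mfP}$-modules equipped with an \emph{overconvergent} integrable connection. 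In particular, any $\mcE_{\mcT^\dagger}$ coming from an element of $\Isoc^\dagger(S/K)$ automatically carries an overconvergent integrable connection. The analogous statement for the $F$-isocrystal category $F\text{-}\Isoc^\dagger(S/K)$ is immediate.

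Now by \Cref{conj!} and \Cref{prop:BvsCS}, the $j^\dagger\mcO$-module $R^qf_{\rig\ast}(X/(T,\overline{T},\mfP))$ is the realization on $(T,\overline{T},\mfP)$ of a canonical overconvergent $F$-isocrystal $R^qf_{\rig\ast}(X/S)\in F\text{-}\Isoc^\dagger(S/K)$. Via the equivalence recalled above, it is therefore equipped with an overconvergent integrable connection $\nabla^{\mathrm{oc}}$ coming from the stratification encoded by the transition isomorphisms $\tau_f$ for maps $f\colon (T',\overline{T}',\mfP')\to (T,\overline{T},\mfP)$ in $\Ad^\dagger_{/S}$.

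The remaining step is to identify $\nabla^{\mathrm{oc}}$ with the Gauß--Manin connection. Both connections are functorially attached to the $j^\dagger\mcO$-module $R^qf_{\rig\ast}(X/(T,\overline{T},\mfP))$, and both admit a Taylor-series description arising from applying the de Rham realization to the two projections out of a tubular neighbourhood of the diagonal in $\mfP\times_{W(k)}\mfP$. In our motivic framework, the transition isomorphisms $\tau_f$ are obtained from the pullback compatibility of $\dR_{\rig}$ (\Cref{prop:Berth-real}), while the Gauß--Manin connection on the target realization $\dR_{\rig}(X/\mcT^\dagger)$ is given by the classical formula coming from the filtration of $\Omega^{\bullet}_{\mcX_h/\mcS_0}$ by powers of the ideal $f^*\Omega^{1}_{\mcS_h/K}$ (cfr.\ \Cref{rmk:computeOmega}). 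The agreement of the two is then a standard, well-documented identification (see e.g.\ \cite[Proposition 3.1]{chiar-tsu}, or the discussion in \cite[Chapter 6]{lestumbook}), and not a new input.

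The main obstacle is therefore not a conceptual one but rather purely an identification: checking that the stratification extracted from the motivic transition maps $\tau_f$ of the overconvergent $F$-isocrystal produced by \Cref{conj!} coincides, after passing to the Clausen--Scholze realization and applying the underlying-module functor $u$, with the Gauß--Manin connection defined directly from the de Rham complex. Since $\dR_{\rig}$ is compatible with pullbacks and the connections match before taking cohomology (by construction of both), this identification is essentially automatic, and the corollary follows.
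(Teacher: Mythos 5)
Your proposal follows essentially the same route as the paper's proof: use the classical equivalence between overconvergent isocrystals on a smooth proper frame and coherent $j^\dagger\mcO$-modules with overconvergent integrable connection (the paper cites \cite[Proposition 7.2.13]{lestumbook}), then identify the connection produced by the isocrystal structure from \Cref{conj!} with the Gau{\ss}--Manin connection. The one point where you are looser than the paper is in the final identification step: you assert it is ``essentially automatic'' from compatibility with pullbacks, whereas the paper is more careful and points to a concrete argument, namely that one can proceed as in \cite[Theorem 3.2.1]{tsuzuki_coherence}, whose hypotheses are now verified, relying in turn on the explicit Taylor-series/stratification computations in \cite[Chapitre 4, Proposition 3.6.4]{berth-cris}. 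Your appeal to pullback compatibility alone does not on its own produce the identification of the Taylor isomorphisms with the Gau{\ss}--Manin connection on cohomology; it is exactly the cited classical computations that do this work. Substantively, though, the structure of your argument matches the paper's.
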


\begin{proof}
    We have to see that for a chosen smooth proper frame $(T,\overline{T}, \mfP)$ over $S$ 
    the image of the realization 
    $R^q f_{\rig\ast}(X/S)_{\mfP}= R^qf_{\rig\ast}(X/(T,\overline{T},\mfP))$
    under the equivalence of categories given in \cite[Proposition 7.2.13]{lestumbook}
    \begin{equation}\label{iso-MIC}
        \Isoc^\dagger((T,\overline{T},\mfP)/K) \xrightarrow{\sim} \MIC^\dagger(T,\overline{T},\mfP)
    \end{equation}
    corresponds to the Gauß--Manin connection. 
    But this can be seen by the same argument as in \cite[Theorem 3.2.1]{tsuzuki_coherence} using the explicit calculations in \cite[Chapitre 4, Proposition 3.6.4]{berth-cris}, as the hypothesis of \textit{loc.cit.} is clearly satisfied. 
\end{proof}

\begin{rmk}
    If we considered everywhere the category $\Ad_{/S}$ rather than the category $\Ad^{\dagger}_{/S}$, that is, if we omitted the dagger structure from our data, then the same proof as above would give a realization $$\dR^\varphi_{\conv}\colon\DA(S)_{\dual}\to F\text{-}\Isoc(S/K)$$
    where the target category is the one of convergent $F$-isocrystals.
    This gives an alternative proof of the fact that  Ogus' higher push-forward $f_{\conv*}\one$ (see \cite[Corollary 2.34]{shiho_relative1}) has the structure of a  convergent $F$-isocrystal. 
    The obvious forgetful functor $\Ad^{\dagger}_{/S}\to \Ad^{}_{/S}$ induces a natural map $\dR_{\rig}^\varphi\to \dR_{\conv}^\varphi$ and a natural functor sending Berthelot's $f_{\rig*}\one$ to Ogus' $f_{\conv*}\one$. 
    In more classical terms, 
    the canonical functor
    $F\text{-}\Isoc^\dagger(S/K) \rightarrow F\text{-}\Isoc(S/K)$
    maps the overconvergent $F$-isocrystal 
    $R^q f_{\rig\ast}(X/S)$
    to Ogus' convergent $F$-isocrystal
    $R^q f_{\mathrm{Ogus}\ast}(X/S)$.
    As this functor is fully faithful by \cite{kedlaya_full-faithful}, $R^q f_{\rig\ast}(X/S)$ coincides with the overconvergent $F$-isocrystal obtained in \cite{ambrosi_ns}. 
\end{rmk}

Dualizable motives are not only the ones of the form $f_*\one$ where $f$ is a smooth and proper map: any finite  (co)limit and/or any push-forward along a smooth and proper map of such objects would still be dualizable. We point out that our main result gives some information on such motives too.

\begin{rmk}\label{rmk:coeffs?}
    Let $f\colon X\to S$ be smooth and proper and let $E\in\DA(X)$ be dualizable. Then, as proven in \Cref{prop:Berth-real} the crystal $\dR^\varphi_{\rig}(E/X) $ has the structure of an overconvergent $F$-isocrystal on $X$, and the isocrystal $\dR^\varphi_{\rig}(f_*E/S) $ has the structure of an overconvergent $F$-isocrystal on $S$. We expect that the latter coincides with the isocrystal $f_*R\Gamma^\varphi_{\rig}(E)$ as defined in \cite[\S 10.4]{chiar-tsu}. This would solve Berthelot's conjecture for coefficients ``of motivic origin'' (see \cite[Conjecture B1F]{lazda-conj}). 
    
    Such compatibility would follow from \Cref{app:comm} provided that one shows an equivalence between the category $\lim\MIC(\mfS^\dagger_{\Q_p})^\varphi$ appearing in  the proposition (defined in terms of Rodr\'iguez Camargo's de Rham stack \cite{rodriguez-camargo}) and the category $F\text{-}\MIC^\dagger(S,\overline{S},\mfP)$ (see \Cref{rmk:future}). We will investigate this comparison in a future work.
\end{rmk}

\appendix

\section{Compatibility of the rigid realization with  push-forward maps}\label{app}

We remark that, following Simpson's and Drinfeld's approach to de Rham resp.\ crystalline cohomology via de Rham stacks \cite{drinfeld-stacky,simpson-stacks}, Rodriguez Camargo \cite{rodriguez-camargo} has recently introduced a category of ``analytic $\mcD$-modules'' over a rigid analytic variety $\mcS$, defined as being the category of (solid) quasi-coherent sheaves $\QCoh(\mcS_{\dR})$ over the adic stack $\mcS_{\dR}$ equipped with a canonical map $p\colon \mcS\to \mcS_{\dR}$. The functor $\mcS\mapsto \QCoh(\mcS_{\dR})$ is equipped with a six-functor formalism, and it is such that $f_*\mcO_{\mcX_{\dR}}\cong R\Gamma_{\dR}(\mcX/\mcS)$ for any smooth map $\mcX\to \mcS$. 

It is then natural to ask 
whether some enhanced motivic realizations
$$
    \RigDA(-)^{\op}\to\lim\QCoh^{\dagger}(\mcT^\dagger_{\dR})\qquad \DA(-)^{\op}\to \lim\QCoh^{\dagger}(\mcT^\dagger_{\dR})
$$
are compatible with   the six functors. We already remarked that their restriction to costructible objects is compatible with pullbacks and tensor products, and in the spirit of Berthelot's conjecture we now inspect the compatibility with push-forward maps induced by smooth and proper morphisms.

\begin{rmk}
    If we ignored the structure of $\mcD$-modules, such a compatibility would not be possible: for example if $f\colon \mcX\to \Spa K$ is a smooth and proper  rigid analytic variety, then $f_*\one\in\mcD(\Spa K)$ computes coherent cohomology, not the de Rham cohomology of $\mcX$.
\end{rmk}

\begin{rmk}\label{whymot}
    The contravariance of the de Rham cohomology creates some unappealing asymmetry and forces us to make restrictions to subcategories (dualizable motives, constructible motives) and/or to consider special maps (e.g. open immersions, smooth qcqs maps) depending on the context. Nonetheless, we expect that one can promote this compatibility to a full compatibility between six functor formalisms. To this aim, one should 
    \begin{enumerate}
        \item prove a six-functor formalism on \emph{overconvergent} $\mcD$-modules;
        \item define a \emph{covariant} version of the functor $\dR$ via the association $ (\mcX\stackrel{f}{\to} \mcS)\mapsto f_\sharp(\mcO_{\mcX_{\dR}})$;
        \item prove compatibility properties of these constructions with the six-functor formalism.
    \end{enumerate} 
    Such a plan goes beyond the scope of the present paper.  %
    Our use of motives underlies the need of homotopies to  construct  a functorial formalism of \emph{overconvergent} $\mcD$-modules for (non-dagger!) rigid varieties. 

    Moreover, as we commented in the introduction, we point out that  the way we proved that $f_*\one$ is a vector bundle whenever $f$ is smooth and proper, makes substantial use of {motivic} techniques (and specifically, the spreading out property of $\RigDA$ \cite[Theorem 2.8.14]{agv}) cfr.\ the proof of \cite[Theorem 4.46]{LBV}. Nonetheless, if the base $\mcS$ is smooth of finite type over $\Q_p$, one can alternatively adapt the criterion of \cite[Proposition 8.8]{katzMIC}  to the rigid setting. 
    \end{rmk}

\subsection{de Rham realizations with values in $D$-modules}

Let $f\colon \mcX\to \mcS$ be a smooth map  of rigid analytic varieties over a non-archimedean field $K$ over $\Q_p$. We will use the following facts proved in \cite{rodriguez-camargo}:
\begin{rmk}\label{juan}
    Under the hypotheses above.
    \begin{itemize}
    \item The functor $\mcS\mapsto \QCoh(\mcS_{\dR})$ has analytic descent.
    \item The functor $p^*\colon\QCoh(\mcS_{\dR})\to\QCoh(\mcS)$   is conservative. 
        \item If $f$ is qcqs, the  object in $\QCoh(\mcS)$ underlying $f_{\dR*}\one$ (that is, the object $p^*f_{\dR*}\one$) is canonically equivalent to $\dR(\mcX/\mcS)$ as defined in \cite{LBV}. 
    \end{itemize}%
\end{rmk}

\begin{dfn}
    Let $\mcS^\dagger=[\mcS\Subset \mcS_0]$ be an affinoid dagger structure. We let $\QCoh^{\dagger}(\mcS^{\dagger}_{\dR})$ be $\varinjlim \QCoh(\mcS_{h\dR})$. The functor $\mcS^\dagger\mapsto \QCoh^{\dagger}(\mcS^{\dagger}_{\dR})$ has analytic descent (see e.g. \Cref{rmk:easydescent}) so that we can define $\QCoh^{\dagger}(\mcS^{\dagger}_{\dR})$ for any dagger variety $\mcS^\dagger$ by gluing. We let $\MIC(\mcS)$ [resp.\ $\MIC(\mcS^\dagger)$] be the full subcategory of $\QCoh(\mcS_{\dR})$ 
    [resp.\ $\QCoh^{\dagger}(\mcS^\dagger_{\dR})$] consisting of those objects whose underlying quasi-coherent module in $\QCoh(\mcS)$ [resp.\ $\QCoh^{\dagger}(\mcS^\dagger)$] is dualizable, i.e. lies in $\Perf(\mcS)$ [resp.\ in $\Perf(\mcS^\dagger)$]. By means of \cite[Theorem 5.4.1]{rodriguez-camargo}  they coincide with dualizable objects in $\QCoh(\mcS_{\dR})$ 
    [resp.\ $\QCoh^{\dagger}(\mcS^\dagger_{\dR})$].
\end{dfn}

\begin{rmk}\label{rmk:MICs}
    With the explicit description of $\QCoh(\mcS_{\dR})$ given in \cite{rodriguez-camargo}, we can expect to compare the categories introduced above with the  categories $\MIC(\mcS)$ resp.  $\MIC(\mcS^\dagger)$ appearing in the classical literature (see e.g. \cite{lestumbook}) on a smooth base.%
\end{rmk}

\begin{prop}\label{prop:realinDmod}
    The functor $\dR_\mcS\colon \RigDA(\mcS)\to\QCoh(\mcS)^{{\op}}$ of \cite{LBV} can be enriched to a functor $$\RigDA(\mcS)_{\ct}\to \QCoh(\mcS_{\dR})^{\op}\stackrel{p^*}{\to} \QCoh(\mcS)^{\op}$$ which is compatible with pull-backs and tensor products. By restriction and by taking duals, we can define a functor (compatible with pull-backs and tensor products):
    $$
        \RigDA(\mcS)_{\dual}\to \MIC(\mcS).
    $$
\end{prop}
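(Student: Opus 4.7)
My plan is to construct the enriched realization by first defining it on the small smooth site $\Sm/\mcS$ via the assignment
$$
\widetilde{\dR}_\mcS\colon \mcX\mapsto f_{\dR*}\mcO_{\mcX_{\dR}}\in\QCoh(\mcS_{\dR})^{\op},
$$
where $f\colon \mcX\to \mcS$ is the structural map, and then to extend this presheaf to all of $\RigDA(\mcS)_{\ct}$ using the universal property. The key input is the third bullet of \Cref{juan}, which identifies $p^{*}\widetilde{\dR}_\mcS$ with the restriction of the functor $\dR_\mcS$ of \cite{LBV} to $\Sm/\mcS$.

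To factor $\widetilde{\dR}_\mcS$ through motives, I would verify \'etale hyperdescent, $\B^{1\dagger}$-invariance, and $T$-stability. Since $p^{*}$ is conservative, and (being a pull-back in the six-functor formalism of \cite{rodriguez-camargo}) preserves all the relevant (co)limits, each of these properties can be tested after applying $p^{*}$, where it reduces to the analogous already-known property of $\dR_\mcS$ established in \cite[Corollary 4.39]{LBV}. This produces the desired functor $\RigDA(\mcS)_{\ct}\to \QCoh(\mcS_{\dR})^{\op}$. Compatibility with pull-backs along a map $g\colon \mcS'\to \mcS$ will follow from the base change isomorphism for $f_{\dR*}$ along qcqs morphisms (fourth bullet of \Cref{juan}), and compatibility with tensor products will come from the monoidality of $(-)_{\dR*}\mcO$ on smooth qcqs morphisms, which can again be verified after $p^{*}$ by reducing to the monoidality of $\dR_\mcS$ on constructibles.

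For the second displayed functor, I would observe that a monoidal functor automatically sends dualizable objects to dualizable objects, and by the identification recalled just before the statement of the proposition (via \cite[Theorem 5.4.1]{rodriguez-camargo}), the dualizable objects of $\QCoh(\mcS_{\dR})$ are precisely those of $\MIC(\mcS)$.

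The hard part will be $T$-stability, namely checking that $\widetilde{\dR}_\mcS(T)$ is invertible in $\QCoh(\mcS_{\dR})$. Under $p^{*}$ this becomes a shift of $\mcO_\mcS$, which is invertible, and heuristically $\widetilde{\dR}_\mcS(T)$ should be a shift of $\one=\mcO_{\mcS_{\dR}}$ with its trivial connection; but one must promote this pointwise identification to an equivalence in $\QCoh(\mcS_{\dR})$ coherently in $\mcS$, so as to carry out the Tate-inversion at the level of $\infty$-categories. A related, largely bookkeeping, technical point will be to upgrade the natural identification $p^{*}\widetilde{\dR}_\mcS\simeq \dR_\mcS$ to a genuine equivalence of $\infty$-functors (not merely of pointwise values) so that the motivic universal property can be applied directly.
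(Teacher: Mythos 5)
Your proposal is correct and follows essentially the same route as the paper's (very terse) proof: define the functor on smooth objects by pushing forward the structure sheaf of the de Rham stack, then verify the motivic properties (descent, $\B^1$-invariance, $T$-stability) after applying the conservative, colimit-preserving functor $p^*$, where everything reduces to the known properties of $\dR_\mcS$ from \cite{LBV}; the passage to $\MIC(\mcS)$ via dualizability is likewise the intended argument. The $T$-stability point you flag as delicate is resolved by the same conservativity trick (the canonical comparison map with a shift of $\one$ becomes an equivalence after $p^*$), which is exactly what the paper implicitly invokes.
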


\begin{proof}
    We can prove that the functor $\mcY^\dagger\mapsto \varinjlim j_{h*}\mcO_{\mcY_{h\dR}}$ from $\Sm^\dagger/\mcS$ to $(\QCoh(\mcS_{\dR}))^{\op}$ is motivic (i.e. it has {\'etale} descent and $\B^1$-invariance). As the functor $p^*$ is conservative and commutes with  colimits, we may just as well prove the statement for the functor $p^*(\varinjlim f_{h*}\mcO_{\mcY_{h\dR}})$ which is nothing but the motivic functor $\dR(-/\mcS)$. %
    {Similarly, using \cite[Corollary 4.37]{LBV}, we deduce that the functor above commutes with pullbacks and tensor products on $\Sm^{\dagger\qcqs}/\mcS$, and hence induces a natural transformation of sheaves of monoidal $\infty$-categories $\RigDA(-)_{\ct}\to\mcD(-_{\dR})^{\op}$.}
\end{proof}

From \Cref{prop:realinDmod}, arguing as in \Cref{prop:dRdaggerismotivic}, we can easily deduce the following consequences.
\begin{cor}\label{cor:dRdaggeroncontructible}
    Let $\mcS^\dagger$ be a dagger space over $\Q_p$. The functor $\dR_{\mcS^\dagger}$ can be enriched to a functor
    $$
    \RigDA(\mcS)_{\ct}\cong\RigDA(\mcS^\dagger)_{\ct}\to \QCoh^{\dagger}(\mcS^{\dagger}_{\dR})^{\op}\stackrel{p^*}{\to} \QCoh^{\dagger}(\mcS^\dagger)^{\op}
    $$
    compatible with pull-backs and tensor products. By restriction and duality, it induces a  functor (compatible with pull-backs and tensor products):
    $$
    \RigDA(\mcS)_{\dual}\cong\RigDA(\mcS^\dagger)_{\dual}\to \MIC(\mcS^{\dagger}).
    $$
    \qed
\end{cor}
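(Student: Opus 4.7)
The plan is to mimic the construction and verification carried out in the proof of \Cref{prop:dRdaggerismotivic}, replacing the $\QCoh$-valued realization $\dR_{\mcS_h}$ at each level with the $\QCoh((\mcS_h)_{\dR})$-valued enhancement produced by \Cref{prop:realinDmod}. Concretely, for an affinoid dagger variety $\mcS^\dagger=[\mcS\Subset\mcS_0]$ I would first define a functor
$$
(\Aff\Sm/\mcS^\dagger) \;\longrightarrow\; \QCoh^{\dagger}(\mcS^{\dagger}_{\dR})^{\op}
$$
by sending a smooth map $\mcX^\dagger \to \mcS^\dagger$ with a chosen straightening $\{\mcX_h \to \mcS_h\}$ to the colimit $\varinjlim_h \dR_{(\mcS_h)_\dR}(\mcX_h) \otimes^{\blacksquare}_{(\mcS_h)_{\dR}} \mcS^{\dagger}_{\dR}$, where $\dR_{(\mcS_h)_{\dR}}$ denotes the $\QCoh((\mcS_h)_{\dR})$-valued realization of \Cref{prop:realinDmod} and the transition maps come from its compatibility with pullbacks. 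As in the corresponding \Cref{rmk:computeOmega0}, independence of the straightening follows from a cofinality argument.

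Next I would check étale descent, $\B^{1\dagger}$-invariance and compatibility with pullbacks/tensor products exactly as in the two steps of the proof of \Cref{prop:dRdaggerismotivic}. All three properties pass from the fixed-level functors $\dR_{(\mcS_h)_{\dR}}$ to their colimit thanks to the fact that filtered colimits in $\Prlo$ commute with finite limits and with base change, and that the base-change functors $-\otimes^\blacksquare_{(\mcS_h)_\dR}(\mcS_{h'})_\dR$ are themselves compatible with pullbacks and tensor products (a fact used already in the analogous verification for $\QCoh^\dagger$). The conservativity and colimit-preservation of $p^*$, together with the compatibility $p^*\dR_{(\mcS_h)_{\dR}} \simeq \dR_{\mcS_h}$ guaranteed by \Cref{prop:realinDmod}, ensure that composing with $p^*$ recovers the functor $\dR_{\mcS^\dagger}$ already constructed in \Cref{prop:dRdaggerismotivic}, so that the existing equivalence $\RigDA(\mcS)_{\ct}\simeq\RigDA(\mcS^\dagger)_{\ct}$ is preserved and we obtain the first claimed functor. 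Globalization to arbitrary (non-affinoid) dagger varieties $\mcS^\dagger$ is done by analytic descent, which holds for both source and target by the same argument sketched in \Cref{rmk:easydescent}.

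For the restriction to dualizable objects, I would observe that the functor produced above is monoidal on compact objects (by the previous step), hence preserves dualizable objects. By \cite[Theorem~5.4.1]{rodriguez-camargo} cited just before the statement, dualizable objects in $\QCoh^\dagger(\mcS^\dagger_{\dR})$ are precisely those whose underlying $\QCoh^\dagger(\mcS^\dagger)$-module is perfect; combined with \Cref{cor:RGammadagger}, which already tells us that for a dualizable motive this underlying module is a vector bundle on $\mcS^\dagger$, we obtain that the image lands in $\MIC(\mcS^\dagger)$. Passing to duals finally produces the covariant functor $\RigDA(\mcS^\dagger)_{\dual}\to \MIC(\mcS^\dagger)$ as in the statement, still compatible with pullbacks and tensor products.

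I expect the only mildly delicate point to be the verification of étale descent for the enhanced functor: one must commute the $\Tot$ of the \v{C}ech cover with the filtered colimit over $h$ and with the base changes $-\otimes^\blacksquare \mcS^\dagger_{\dR}$, and for this one needs that each $\dR_{(\mcS_h)_{\dR}}(\mcY_h^{\bullet})\otimes^\blacksquare_{(\mcS_h)_{\dR}}\mcS^\dagger_{\dR}$ is uniformly bounded in homological degree (so that the relevant $\Tot$ commutes with filtered colimits). This is inherited from the analogous property of $\underline{\Omega}_{\mcY_h^\bullet/\mcS_h}$ used in the proof of \Cref{prop:dRdaggerismotivic}. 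Everything else is a routine reshuffling of the arguments that the author has already spelled out in the non-$\dR$-stack setting.
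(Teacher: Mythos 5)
Your proposal is correct and follows exactly the route the paper intends: the paper's own ``proof'' is the one-line remark ``From \Cref{prop:realinDmod}, arguing as in \Cref{prop:dRdaggerismotivic}'', i.e.\ redo the dagger construction levelwise with the $\QCoh((\mcS_h)_{\dR})$-valued realization in place of the $\QCoh(\mcS_h)$-valued one and use the conservativity of $p^*$ to transport the descent, homotopy-invariance and boundedness checks, which is precisely what you do. Your extra appeal to \Cref{cor:RGammadagger} in the last step is harmless but redundant, since monoidality already forces the image of a dualizable motive to be dualizable in $\QCoh^\dagger(\mcS^\dagger_{\dR})$, and by \cite[Theorem 5.4.1]{rodriguez-camargo} that subcategory is, by definition in the paper, exactly $\MIC(\mcS^\dagger)$.
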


\begin{cor}\label{cor:deRhamrealisationlimit2}
    Let $\mcS$ be a rigid analytic variety over a non-archimedean field $K$ of characteristic zero. 
    The de Rham realization functor
    $$
       \dR_\mcS\colon \RigDA(\mcS)_{\ct}\to \QCoh(\mcS)^{\op}
    $$
    can be enriched to a functor
    $$
        \RigDA(\mcS)_{\ct}\to \lim_{\mcT^\dagger\in\Ad^{\dagger\op}/\mcS}\QCoh^{\dagger}(\mcT^{\dagger}_{\dR})^{\op}.
    $$ 
    It is compatible with pullbacks and tensor products.
    By restriction and duality, it induces a functor (compatible with pull-backs and tensor products):
    $$
        \RigDA(\mcS)_{\dual}\to \lim_{\mcT^\dagger\in\Ad^{\dagger\op}/\mcS}\MIC(\mcT^{\dagger}).
    $$
    \qed
\end{cor}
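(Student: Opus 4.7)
The strategy is to imitate the proof of \Cref{cor:deRhamrealisationlimit1}, using \Cref{cor:dRdaggeroncontructible} in place of \Cref{prop:dRdaggerismotivic}. Concretely, for each object $(\mcT^\dagger,f)\in\Ad^\dagger/\mcS$ one composes the pullback along $f$ with the $\mcD$-module enhanced dagger realization:
$$
\RigDA(\mcS)_{\ct}\xrightarrow{f^*}\RigDA(\widehat{\mcT}^\dagger)_{\ct}\simeq\RigDA(\mcT^\dagger)_{\ct}\xrightarrow{\dR_{\mcT^\dagger}}\QCoh^\dagger(\mcT^\dagger_{\dR})^{\op}.
$$
For any map $\rho\colon(\mcT'^\dagger,f')\to(\mcT^\dagger,f)$ in $\Ad^\dagger/\mcS$, the compatibility with pullbacks built into \Cref{cor:dRdaggeroncontructible} produces an essentially commutative diagram analogous to the one appearing in the proof of \Cref{cor:deRhamrealisationlimit1}, relating the two composite functors through $\rho^*$ on both sides.

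Assembling these data yields a functor
$$
\RigDA(\mcS)_{\ct}\to\lim_{\mcT^\dagger\in\Ad^\dagger/\mcS}\QCoh^\dagger(\mcT^\dagger_{\dR})^{\op}
$$
in $\Cat_\infty$, exactly as in \Cref{rmk:fun0} and the argument of \Cref{cor:deRhamrealisationlimit1}. Compatibility with pullbacks and tensor products is inherited level-wise from \Cref{cor:dRdaggeroncontructible}: for a map $g\colon\mcS'\to\mcS$, the projection onto any $(\mcT'^\dagger,f')\in\Ad^\dagger/\mcS'$ factors as the projection onto $(\mcT'^\dagger,g\circ f')\in\Ad^\dagger/\mcS$, and both functors are monoidal on constructible objects.

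For the second statement, observe that pullback in $\RigDA$ preserves dualizable objects, and that by definition $\MIC(\mcT^\dagger)$ is the subcategory of dualizable objects of $\QCoh^\dagger(\mcT^\dagger_{\dR})$; moreover \Cref{cor:dRdaggeroncontructible} already provides the refined functor on dualizable objects level-wise. Composing with the duality involution (which is natural because the $\dR_{\mcT^\dagger}$ are monoidal on constructible motives) and taking the limit produces the covariant functor $\RigDA(\mcS)_{\dual}\to\lim_{\mcT^\dagger\in\Ad^\dagger/\mcS}\MIC(\mcT^\dagger)$, again compatible with pullbacks and tensor products.

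The only genuinely delicate point is the coherent $\infty$-categorical assembly of the diagram indexed by $\Ad^\dagger/\mcS$: one must check that the naturality of $\dR$ in $\mcT^\dagger$ (across all $\infty$-cells, not just up to homotopy) is encoded by a functor of $\infty$-categories. But this is formal, since the entire construction of \Cref{cor:dRdaggeroncontructible} was performed at the level of $\infty$-categories from the outset, and $\Ad^\dagger/\mcS\to\Cat_\infty$ can be packaged through the straightening/unstraightening correspondence exactly as in \Cref{rmk:fun0} and \Cref{rmk:thomason}, so no further work is required beyond unwinding the definitions.
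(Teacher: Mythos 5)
Your proof is correct and takes the same route the paper intends: the paper marks this corollary with \verb|\qed| precisely because it follows formally from \Cref{cor:dRdaggeroncontructible} by repeating the argument of \Cref{cor:deRhamrealisationlimit1} (replacing \Cref{prop:dRdaggerismotivic} and \Cref{cor:RGammadagger} by their $\mcD$-module-enhanced analogue). One small imprecision: you say \emph{by definition} $\MIC(\mcT^\dagger)$ is the dualizable subcategory of $\QCoh^\dagger(\mcT^\dagger_{\dR})$, whereas the paper defines it via dualizability of the underlying object in $\QCoh^\dagger(\mcT^\dagger)$ and invokes \cite[Theorem~5.4.1]{rodriguez-camargo} to identify the two — but since you use only the output of \Cref{cor:dRdaggeroncontructible} (which already lands in $\MIC$), this does not affect the argument.
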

Precomposing with the limit Monsky--Washnitzer realization we deduce also:
\begin{cor}\label{cor:isocrysrealisationlimit}
    Let $S$ be an algebraic  variety over a perfect field $k$ over $\F_p$.
    The relative rigid  realization functor
    $$
        \DA(S)\to \lim_{\mcT^\dagger\in\Ad^{\dagger\op}_{/S}}\QCoh^{\dagger}(\mcT^\dagger)^{\op}
    $$
    can be enriched to a functor
    $$
        \DA(S)\to \lim_{\mcT^\dagger\in\Ad^{\dagger\op}_{/S}}\QCoh^{\dagger}(\mcT^{\dagger}_{\dR})^{\op}.
    $$ 
    When restricted to constructible motives, it is compatible with pullbacks and tensor products.  By restriction and duality, it induces a functor (compatible with pull-backs and tensor products):
    $$
        \DA(S)_{\dual}\to \lim_{\mcT^\dagger\in\Ad^{\dagger\op}_{/S}}\MIC(\mcT^{\dagger}).
    $$\qed
\end{cor}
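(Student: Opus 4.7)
The plan is to simply precompose the limit Monsky--Washnitzer realization with the (dagger, $\mcD$-module enhanced) de Rham realization whose existence has been established in the preceding corollaries. More precisely, \Cref{prop:MW} provides the functor
\[
\MW\colon \DA(S)\longrightarrow \lim_{\mcT^\dagger\in\Ad^\dagger_{/S}}\RigDA(\mfT^\dagger_{\Q_p}),
\]
and for each object $\mcT^\dagger=(\mfT,\mfT^\dagger_{\Q_p},f)$ in $\Ad^\dagger_{/S}$ the functor of \Cref{cor:dRdaggeroncontructible} provides a realization $\RigDA(\mfT_{\Q_p}^\dagger)_{\ct}\to \QCoh^\dagger(\mfT^\dagger_{\Q_p,\dR})^{\op}$. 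First I would argue that these realizations are natural in $\mcT^\dagger$: for a morphism $\rho\colon\mcT'^\dagger\to\mcT^\dagger$ in $\Ad^\dagger_{/S}$ the commutativity of the relevant square follows from the compatibility of the $\mcD$-module de Rham realization with pullbacks (already built into \Cref{prop:realinDmod,cor:dRdaggeroncontructible}). Hence the levelwise functors assemble into a functor on the limit
\[
\lim_{\mcT^\dagger\in\Ad^\dagger_{/S}}\RigDA(\mfT^\dagger_{\Q_p})_{\ct}\longrightarrow \lim_{\mcT^\dagger\in\Ad^\dagger_{/S}}\QCoh^\dagger(\mfT^\dagger_{\Q_p,\dR})^{\op}.
\]

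Next, I would compose with $\MW$. Since $\MW$ restricts to constructible motives (see the last claim of \Cref{prop:MW}), we obtain the desired enrichment
\[
\DA(S)_{\ct}\longrightarrow \lim_{\mcT^\dagger\in\Ad^\dagger_{/S}}\QCoh^\dagger(\mfT^\dagger_{\Q_p,\dR})^{\op}.
\]
The extension to all of $\DA(S)$ (ignoring the monoidal/pullback properties) is obtained just as in \Cref{cor:Frobenius_structures,cor:deRhamrealisationlimit2} since $\QCoh^\dagger(-_{\dR})$ is presentable and the resulting realization is colimit-preserving on $\DA(S)$; the further composition with $p^*$ recovers the non-enriched functor of \Cref{prop:Berth-real}, as desired. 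Compatibility of the restriction to $\DA(S)_{\ct}$ with pullbacks and tensor products reduces, by the definition of the limit category and by \Cref{rmk:fun}, to the corresponding compatibilities for $\MW$ (stated in \Cref{prop:MW}) and for the dagger $\mcD$-module de Rham realization (stated in \Cref{cor:dRdaggeroncontructible}); these are both monoidal on constructible objects, so the composition is too.

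Finally, for the last statement, I would restrict to the subcategory $\DA(S)_{\dual}\subset \DA(S)_{\ct}$ of dualizable motives. Since $\MW$ is monoidal, it preserves dualizable objects, and the subsequent realization into $\QCoh^\dagger(\mfT^\dagger_{\Q_p,\dR})^{\op}$ is monoidal on constructibles, so it maps dualizables to dualizables. By definition of $\MIC(\mfT^\dagger_{\Q_p})$ (as the dualizable objects of $\QCoh^\dagger(\mfT^\dagger_{\Q_p,\dR})$, using \cite[Theorem 5.4.1]{rodriguez-camargo} as recorded above), composing with the contravariant duality endomorphism $M\mapsto M^\vee$ yields the desired covariant functor
\[
\DA(S)_{\dual}\longrightarrow \lim_{\mcT^\dagger\in\Ad^\dagger_{/S}}\MIC(\mfT^\dagger_{\Q_p}),
\]
again compatible with pullbacks and tensor products. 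There is no substantive obstacle here: this statement is a formal combination of \Cref{prop:MW} and \Cref{cor:deRhamrealisationlimit2}. The only subtlety to keep in mind is the book-keeping of naturality in $\mcT^\dagger$ to ensure the levelwise constructions glue to a map of limit $\infty$-categories, which is handled exactly as in the proofs of \Cref{prop:Berth-real} and \Cref{cor:Frobenius_structures}.
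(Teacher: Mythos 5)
Your proof is correct and follows essentially the same route as the paper, which disposes of this corollary with the single phrase ``Precomposing with the limit Monsky--Washnitzer realization we deduce also.'' You have merely unpacked that sentence: apply \Cref{cor:dRdaggeroncontructible} levelwise over $\Ad^\dagger_{/S}$, observe the naturality in $\mcT^\dagger$ needed to assemble the levelwise realizations into a functor on the limit, and precompose with the $\MW$ functor of \Cref{prop:MW}, with the restriction to dualizables and passage to $\MIC$ handled exactly as in \Cref{cor:deRhamrealisationlimit2}.
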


\subsection{Compatibility with push-forward maps}

We prove some compatibility properties in the different contexts we have introduced above.
\begin{prop}\label{rmk:comm1}
    Let $f\colon \mcX^\dagger\to \mcS^\dagger$ be a smooth qcqs map of dagger varieties over $K$. There is a canonically commutative diagram:
    $$
        \xymatrix{
        \RigDA(\mcX^\dagger)_{\ct}\ar[r]^-{\dR}\ar[d]^{f_\sharp}&\QCoh^{\dagger}(\mcX^{\dagger}_{\dR})^{\op}\ar[d]^{f_*}\\
        \RigDA(\mcS^\dagger)_{\ct}\ar[r]^-{\dR}&\QCoh^{\dagger}(\mcS^{\dagger}_{\dR})^{\op}.
        }
    $$
In particular, if  $f$ is smooth and proper, then there is a canonically commutative diagram:
    $$
        \xymatrix{
        \RigDA(\mcX^\dagger)_{\dual}\ar[r]^-{\dR^\vee}\ar[d]^{f_*}&\MIC(\mcX^\dagger)\subseteq\QCoh^{\dagger}(\mcX^{\dagger}_{\dR})\ar@<5ex>[d]^{f_*}\\
        \RigDA(\mcS^\dagger)_{\dual}\ar[r]^-{\dR^\vee}&\MIC(\mcS^\dagger)\subseteq\QCoh^{\dagger}(\mcS^{\dagger}_{\dR}).
        }
    $$
\end{prop}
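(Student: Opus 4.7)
The plan is to prove the first diagram (for smooth qcqs $f$), and then deduce the second by duality using \Cref{rmk:propersmooth}.

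For the first diagram, the natural transformation is constructed from the pull-back compatibility of $\dR$. By \Cref{cor:dRdaggeroncontructible}, the functor $\dR\colon \RigDA(-)_{\ct}\to \QCoh^\dagger(-_{\dR})^{\op}$ is compatible with pull-backs; applied to $f$ and combined with the adjunctions $f_\sharp \dashv f^*$ in $\RigDA$ and $f^* \dashv f_*$ in $\QCoh^\dagger(-_{\dR})$, this produces a canonical natural transformation $\eta\colon \dR \circ f_\sharp \to f_* \circ \dR$ which we must show is an equivalence. Since $\RigDA(\mcX^\dagger)_{\ct}$ is generated (under finite colimits, shifts, and retracts) by motives $\Q_{\mcX^\dagger}(\mcY^\dagger)$ with $g\colon \mcY^\dagger \to \mcX^\dagger$ smooth qcqs, and since both $\dR\circ f_\sharp$ and $f_*\circ\dR$ preserve these operations ($f_\sharp$ is a left adjoint, $f_*$ is a right adjoint, and $\dR$ is colimit-preserving into $\QCoh^\dagger(-_{\dR})^{\op}$), it suffices to verify $\eta$ on such generators. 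For one of them, $f_\sharp \Q_{\mcX^\dagger}(\mcY^\dagger) \simeq \Q_{\mcS^\dagger}(\mcY^\dagger)$ with structure map $fg$; by the construction underlying \Cref{prop:realinDmod} and its dagger analog \Cref{cor:dRdaggeroncontructible}, we have $\dR(\Q_{\mcX^\dagger}(\mcY^\dagger)) \simeq g_{\dR*}\one$ and $\dR(f_\sharp \Q_{\mcX^\dagger}(\mcY^\dagger)) \simeq (fg)_{\dR*}\one$; finally, functoriality of the $\mcD$-module push-forward gives $f_* g_{\dR*}\one \simeq (fg)_{\dR*}\one$, and a routine compatibility check identifies this evident equivalence with $\eta$.

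For the second (smooth and proper) diagram, one reduces to the first by duality. For smooth proper $f$ and $M \in \RigDA(\mcX^\dagger)_{\dual}$, \Cref{rmk:propersmooth} gives $(f_*M)^\vee \simeq f_\sharp M^\vee$. Combined with the definition $\dR^\vee(N) = \dR(N^\vee)$ from \Cref{cor:RGammadagger} and the first diagram applied to $M^\vee$, we obtain
$$\dR^\vee(f_*M) = \dR((f_*M)^\vee) \simeq \dR(f_\sharp M^\vee) \simeq f_* \dR(M^\vee) = f_* \dR^\vee(M),$$
with the middle equivalence supplied by $\eta$. Since $f_*M$ remains dualizable, the left-hand side lies in $\MIC(\mcS^\dagger)$, so $f_*$ preserves $\MIC$ and the restricted diagram commutes.

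The main technical obstacle will be ensuring that the non-dagger functoriality $f_*\circ g_{\dR*} \simeq (fg)_{\dR*}$ from Rodr\'iguez Camargo's six-functor formalism (\Cref{juan}) upgrades correctly to the dagger setting under the description $\QCoh^\dagger(-_{\dR}) = \varinjlim_h \QCoh((-)_{h\dR})$. A clean approach is to choose a straightening of $f$ so that the dagger $f_*$ is induced by the $f_{h*}$'s over strict neighborhoods, and then invoke the non-dagger statement levelwise before passing to the colimit.
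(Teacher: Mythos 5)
You follow the same overall strategy as the paper: construct the natural transformation $\eta\colon f_*\circ\dR\to\dR\circ f_\sharp$ from the unit of the adjunction and the compatibility of $\dR$ with pullbacks, reduce to checking invertibility on generators $g_\sharp\one$, and compare with the de Rham stack pushforwards from \Cref{juan}. The deduction of the second diagram from the first via the formula $(f_*M)^\vee\simeq f_\sharp M^\vee$ of \Cref{rmk:propersmooth}, combined with the monoidality of $\dR$, also matches the paper.

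The gap is in the verification step. You identify the "main technical obstacle" as upgrading the functoriality $f_*\circ g_{\dR*}\simeq (fg)_{\dR*}$ to the dagger setting, and propose to "choose a straightening ... and invoke the non-dagger statement levelwise before passing to the colimit." But composition of push-forwards is essentially formal; the genuine difficulty is elsewhere. In the dagger setting $f_*$ is defined as a \emph{right adjoint} to $f^*$ on $\QCoh^\dagger(-_\dR)=\varinjlim_h\QCoh(-_{h\dR})$, and right adjoints do not in general commute with filtered colimits of presentable $\infty$-categories. So the claim that "the dagger $f_*$ is induced by the $f_{h*}$'s" is precisely what must be proved, not assumed. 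Concretely, one has the explicit description $\dR f_\sharp\one\simeq\varinjlim_h f_{h\dR*}\one\otimes^\blacksquare_{\mcS_h}\mcS^\dagger$ (from \Cref{rmk:computeOmega0} and \Cref{juan}), and the substance of the argument is to verify that this object represents the functor $C\mapsto\map(f^*C,\one)$, i.e.\ that it really is $f_*\one$. The paper does this by a mapping-space computation against compact objects of the form $C_0\otimes_{\mcS_0}^\blacksquare\mcS^\dagger$, establishing $\map(C,\dR f_\sharp\one)\cong\varinjlim_h\map(f_h^*(C_0\otimes_{\mcS_0}\mcS_h),\one)\cong\map(f^*C,\one)$. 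Your sketch does not supply this check and so leaves the key identification unjustified.

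A smaller point: the paper additionally uses descent on source and target to reduce to $\mcS$ and $\mcX$ affinoid and to the single case $\dR f_\sharp\one\cong f_*\one$, rather than evaluating on a general generator $g_\sharp\one$; this trims the bookkeeping but is not an essential difference from your approach.
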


\begin{proof}
    We consider the upper diagram first. We remark that on the right hand side, the functor $f_*$ is a \emph{left} adjoint to $f^*$ (as we are considering opposite categories). The unit of the adjunction and the commutation of $\dR$ with pullbacks induce a natural transformation
    $$
        \dR\to \dR\circ f^*\circ f_\sharp\cong f^*\circ\dR\circ f_\sharp
    $$
    giving rise to a canonical map $f_*\circ \dR\to \dR\circ f_\sharp$. In order to prove it is invertible, by descent and the fact that the objects of the form $g_\sharp\one(m)$ with $g\colon \mcU\to \mcX$ smooth and qcqs generate  $\RigDA(\mcX)_{\ct}$ under shifts and finite (co)limits, we may as well assume that $\mcS$ and $\mcX$ are affinoid and prove that $\dR f_\sharp\one\cong f_*\dR\one=f_*\one$.

    Choose a straightening $f_h\colon \mcX_h\to \mcS_h$ of $f$. By our computations (see \Cref{rmk:computeOmega0}) and the properties of the de Rham stack (see \Cref{juan}) we know 
    $$
        \dR f_\sharp\one=\dR(\mcX^\dagger/\mcS^\dagger)\cong\varinjlim\Omega(\mcX_h/\mcS_h)\otimes_{\mcS_h}^\blacksquare \mcS^\dagger\cong \varinjlim f_{h\dR*}\one\otimes_{\mcS_h}^\blacksquare \mcS^\dagger.
    $$
    By an explicit computation, we then deduce for any compact object $C$ 
    in $\varinjlim \QCoh(\mcS_{h\dR})_{\ct}=\QCoh^{\dagger}(\mcS^\dagger_{\dR})_{\ct}$ of the form $C_0\otimes_{\mcS_0}^\blacksquare \mcS^\dagger$, one has $$\map(C,\dR f_\sharp\one)\cong\varinjlim\map(f_h^*(C_0\otimes_{\mcS_0}\mcS_h),\one)\cong \map(f^*C,\one)$$ proving that the canonical map $f_*\one\to\dR f_\sharp\one$ is invertible, as wanted.

    We now turn to the second diagram. The commutativity  follows from the commutativity of the upper diagram, the monoidality of $\dR$ (when restricted to constructible objects) and the commutativity of the square
    $$
        \xymatrix{
        \RigDA(\mcX)_{\dual}\ar[r]^{(-)^\vee}\ar[d]^{f_*} & \RigDA(\mcX)_{\dual}^{\op}\ar[d]^{f_\sharp}\\
        \RigDA(\mcS)_{\dual}\ar[r]^{(-)^\vee} & \RigDA(\mcS)_{\dual}^{\op}
        }
    $$
    as shown in \Cref{rmk:propersmooth}.
\end{proof}

\begin{prop}\label{prop:comm2}
    Let $f\colon \mcX\to \mcS$ be a smooth qcqs map of rigid varieties over $K$. 
    There is a canonically commutative diagram
    $$
        \xymatrix{
        \RigDA(\mcX)_{\ct}\ar[d]^{f_\sharp}\ar[r]^-{\dR}& (\lim_{\mcY^\dagger\in\Ad^{\dagger\op}_{/\mcX}}\QCoh^{\dagger}(\mcY^{\dagger\dR}))^{\op}\ar[d]^{f_*}\\
        \RigDA(\mcS)_{\ct}\ar[r]^-{\dR} &(\lim_{\mcT^\dagger\in\Ad^{\dagger\op}_{/\mcS}} \QCoh^{\dagger}(\mcT^{\dagger\dR}))^{\op}.
        }
    $$
    In particular, whenever $f$ is smooth and proper there is a canonically commutative diagram
    $$
        \xymatrix{
        \RigDA(\mcX)_{\dual}\ar[d]^{f_*}\ar[r]^-{\dR^\vee}&   \lim\MIC(\mcY^\dagger)\subseteq \lim\QCoh^{\dagger}(\mcY^{\dagger\dR})\ar@<7ex>[d]^{f_*}\\
        \RigDA(\mcS)_{\dual}\ar[r]^-{\dR^\vee} & \lim\MIC(\mcT^\dagger)\subseteq  \lim\QCoh^{\dagger}(\mcT^{\dagger\dR}).
        }
    $$
\end{prop}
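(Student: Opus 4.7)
The plan is to deduce \Cref{prop:comm2} from the pointwise case \Cref{rmk:comm1} by evaluating each side of the claimed diagram at an object of $\Ad^\dagger_{/\mcS}$ and reducing, per object, to a smooth qcqs map of dagger varieties.  The main preliminary task is to upgrade the smooth qcqs map $f\colon\mcX\to\mcS$ of plain rigid varieties into a functor on dagger indexing categories.

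First I would construct a ``pullback of dagger structures'' $f^*\colon\Ad^\dagger_{/\mcS}\to\Ad^\dagger_{/\mcX}$ such that the underlying rigid variety of $f^*\mcT^\dagger$ is $\mcX\times_\mcS\widehat{\mcT}^\dagger$ and the structural map $f_\mcT\colon f^*\mcT^\dagger\to\mcT^\dagger$ is a smooth qcqs map of dagger varieties (to which \Cref{rmk:comm1} applies).  Concretely, I would obtain the dagger structure either by embedding $\mcX\times_\mcS\widehat{\mcT}^\dagger$ as a relatively compact open in the partial compactification over $K$ of \cite[Definition 3.1]{LBV}, or, by the observation of \Cref{rmk:berk}, by using that \'etale locally on $\mcX$ one can lift $f$ to a genuine morphism of dagger varieties and then glue via analytic descent (cfr.\ \Cref{rmk:thomason}).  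Next I would define $f_*$ on the limit side pointwise by $(f_*E)_{\mcT^\dagger}\colonequals f_{\mcT*}\bigl(E_{f^*\mcT^\dagger}\bigr)$, where $f_{\mcT*}$ is the push-forward of \Cref{rmk:comm1}; the coherence under a transition $\mcT'^\dagger\to\mcT^\dagger$ uses that $f_{\mcT*}$ commutes with Cartesian base change, which in turn follows from the identification $f_{\mcT*}\simeq \bar f_{\mcT!}$ for the partial compactification recalled in \Cref{juan}.

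Once $f^*$ and $f_*$ are in place, the natural transformation $f_*\circ\dR\to \dR\circ f_\sharp$ on the limit categories is produced by the unit of the adjunction $f^*\dashv f_\sharp$ on motives together with the pullback compatibility of $\dR$ from \Cref{cor:deRhamrealisationlimit2}; invertibility is then a pointwise statement, which at each $\mcT^\dagger\in\Ad^\dagger_{/\mcS}$ reduces to \Cref{rmk:comm1} applied to the map $f_\mcT$ of dagger varieties.  The hard part is precisely the coherent construction of the pullback $f^*$: in the pointwise situation of \Cref{rmk:comm1} the ambient spaces $\mcX_0$ and $\mcS_0$ are already tied together by a map of straightenings, whereas here the ambient $\mcT_0$ does not see $\mcX$ at all (only $\widehat{\mcT}^\dagger$ does), so one has to manufacture a dagger envelope of $\mcX\times_\mcS\widehat{\mcT}^\dagger$ over $K$ either globally via relative compactifications or locally via \'etale lifts and descent. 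Finally, the second (smooth and proper) diagram is deduced from the first by dualizing and invoking the identification $f_\sharp(-)^\vee\simeq (f_*-)^\vee$ on dualizable motives from \Cref{rmk:propersmooth}, exactly as in the closing argument of the proof of \Cref{rmk:comm1}.
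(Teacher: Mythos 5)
Your overall strategy matches the paper's: reduce the commutativity on the limit categories to the pointwise statement of \Cref{rmk:comm1}, by constructing a functor $\Ad^\dagger_{/\mcS}\to\Ad^\dagger_{/\mcX}$ along which one can push forward and check invertibility object by object. However, you have correctly identified the hard part (the coherent construction of the ``pullback of dagger structures'' $f^*$) and then not resolved it; the two fixes you sketch do not work in the form given. The relative compactification $(\mcX\times_\mcS\widehat{\mcT}^\dagger)^{/K}$ does not contain $\mcX\times_\mcS\widehat{\mcT}^\dagger$ as a \emph{strictly interior} open: the embedding $\mcU\subset\mcU^{/K}$ is a dense open immersion, so it is never a $\Subset$ and does not supply a dagger structure. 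And \Cref{rmk:berk}, to which you appeal for the étale-local lift, presupposes that $f$ is \emph{already} a map of dagger structures; it tells you how to straighten such a map, not how to manufacture a dagger envelope of a pullback along the plain rigid map $f$. The obstruction you yourself name — that $\mcT_0$ does not map to $\mcS$, only $\widehat{\mcT}^\dagger$ does — is real, and neither of your routes circumvents it.

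The paper's proof resolves exactly this issue by \emph{not} attempting a uniform construction of $f^*$. Instead it uses analytic descent and the generation of $\RigDA(\mcX)_{\ct}$ by objects $g_\sharp\one(m)$ to reduce to the three generating classes of maps (Laurent open immersions, the projection $\P^1_\mcS\to\mcS$, and finite étale maps), and in each case writes down an explicit functor $r\colon\Ad^\dagger_{/\mcS}\to\Ad^\dagger_{/\mcX}$ whose value on $\mcT^\dagger=[\mcT\Subset\mcT_0]$ is, respectively: $[g^{-1}(\mcX)\Subset\mcT_0]$ (same ambient $\mcT_0$), $[\P^1_\mcT\Subset\P^1_{\mcT_0}]$ (using that $\P^1$ is proper hence carries a canonical dagger structure), and the canonical finite étale extension of $\mcX\times_\mcS\mcT\to\mcT$ from \cite[Proposition 2.15(2)]{vezz-MW}. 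Each of these constructions is elementary precisely because of the special shape of $f$; none of them requires a new ambient space for the fiber product. The paper then checks that $r$ is $\infty$-final, which lets one rewrite the limit over $\Ad^\dagger_{/\mcX}$ as a limit indexed by $\Ad^\dagger_{/\mcS}$, defines $f_\sharp$ levelwise via the Cartesian base-change compatibility \cite[Proposition 2.2.1(3)]{agv}, and applies \Cref{rmk:comm1} pointwise — the part of your plan that is correct. So the gap in your proposal is precisely the absence of this case-by-case reduction: you are implicitly asserting the existence of a global dagger pullback functor for arbitrary smooth qcqs $f$, which is not available, and the correct move is to generate and reduce rather than to construct uniformly.
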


\begin{proof}
    Arguing as in the proof of \Cref{rmk:comm1} and using descent on $X$, 
    we may and do assume  that $S$ is affinoid and that the smooth map $f\colon \mcX\to \mcS$ is either one of the following:
    \begin{itemize}
        \item a Laurent open embedding;
        \item the projection $\P^1_\mcS\to \mcS$;
        \item a finite \'etale map;
    \end{itemize}
    and prove $\dR f_\sharp\one\simeq f_*\one$. 
    We will check the statement for each case separately.

    Assume that $f\colon \mcX\to \mcS$ is a Laurent open immersion. Any element $\mcT^\dagger\to \mcS$ in $\Ad^{\dagger}_{/\mcS}$ induces by restriction a canonical element $\mcT^\dagger_\mcX\to \mcX$ in $\Ad^{\dagger}_{/\mcX}$ giving rise to a functor
    $$
    r\colon \Ad^\dagger_{/\mcS}\to \Ad^\dagger_{/\mcX},\qquad \mcT^\dagger\mapsto \mcT^\dagger_\mcX.
    $$
    Note that this functor is also (infinity) final (any comma category $(\mcY^\dagger/r)$ has an initial object given by $\mcY^\dagger$ itself) so that $\lim_{\mcY^\dagger}\RigDA(\mcY^\dagger)\simeq\lim_{\mcT^\dagger}\RigDA(\mcT^\dagger_\mcX)$ and similarly $\lim_{\mcY^\dagger}\QCoh(\mcY^{\dagger,\dR})\simeq\lim_{\mcT^\dagger}\QCoh(\mcT^{\dagger,\dR}_\mcX)$.

    We now claim that for a fixed arrow  $\mcT'^{\dagger}\to\mcT^{\dagger}$ in $\Ad^\dagger_{/\mcS}$ the diagram
    $$
        \xymatrix{
        \RigDA(\mcT^{\dagger}_\mcX)\ar[r]\ar[d]^{f_{\mcT\sharp}}&\RigDA({\mcT'_\mcX}^{\dagger})\ar[d]^{f_{\mcT'\sharp}}\\
        \RigDA(\mcT^{\dagger})\ar[r]&\RigDA({\mcT}^{\dagger})
        }
    $$
    induced by the commutative diagram%
    $$
        \xymatrix{
\mcT'^{\dagger}_\mcX\ar[r]\ar[d]^{f_\mcT}&\mcT_\mcX^{\dagger}\ar[d]^{f_{\mcT'}}\\
        \mcT'^{\dagger}\ar[r]&{\mcT}^{\dagger}
        }
    $$ 
    is canonically commutative. 
    By means of the equivalence $\RigDA(\mcT^\dagger) \simeq \RigDA(\widehat{\mcT}^\dagger)$ and the fact that the square of the underlying rigid varieties is cartesian, this can be deduced from the commutation of the (non-overconvergent) motivic functor $f_\sharp$ with cartesian pullback functors \cite[Proposition 2.2.1(3)]{agv}. We can then  define a natural functor  
    $$
        f_\sharp\colon \lim \RigDA(\mcY^{\dagger})\simeq \lim \RigDA(\mcT_\mcX^{\dagger})\to \lim \RigDA(\mcT^{\dagger}).
    $$
    by considering the maps $f_{\mcT\sharp}$ levelwise (it is easily seen to be  a left adjoint to $f^*$). 

    In particular, we deduce that the objects $(\dR_{\mcT^\dagger} f_{\mcT\sharp}\one)$ and their natural connection maps form an object in $\lim \QCoh(\mcT^\dagger_{\dR})$. We have shown in \Cref{rmk:comm1} that this object is canonically isomorphic to $(f_{\mcT*}\one)$ equipped with its natural transformation maps (which are then equivalences). This object represents point-wise, and hence globally, the functor $\map(f^*,\one)$ and is therefore naturally equivalent to $f_*\one$, as wanted.

    We now turn to the case of the projection $\P^1_\mcS\to \mcS$. We can argue as before by noting that $\P^1\Subset\P^1$ is a canonical dagger structure so that there is a well-defined (infinity) final functor
    $$
        r\colon \Ad^\dagger_{/\mcS}\to \Ad^\dagger_{/\P^1_\mcS}\qquad \mcT^\dagger\mapsto \P^1_{\mcT^\dagger}
    $$
    (in this case, each comma category $(\mcY^\dagger/r)$ has as initial object given by $\Delta\colon \mcY^\dagger\to \P^1_{\mcY^\dagger}$).  
    The same proof applies to the finite \'etale situation, using the fact that for any choice of a dagger structure $\mcT^\dagger\to S$, any finite \'etale map $\mcX\to \mcS$ canonically extends to a finite \'etale map $\mcY^\dagger\to \mcT^\dagger$ (see \cite[Proposition 2.15(2)]{vezz-MW}).

    The commutativity of the second diagram follows from the commutativity of the first one, the monoidality of $\dR$ (when restricted to constructible objects) and the commutativity of the square
    $$
        \xymatrix{
        \RigDA(\mcX)_{\dual}\ar[r]^{(-)^\vee}\ar[d]^{f_*} & \RigDA(\mcX)_{\dual}^{\op}\ar[d]^{f_\sharp}\\
        \RigDA(\mcS)_{\dual}\ar[r]^{(-)^\vee} & \RigDA(\mcS)_{\dual}^{\op}
        }
    $$
    as shown in \Cref{rmk:propersmooth}.
\end{proof}

\begin{rmk}
    If $\mcS$ is a rigid analytic variety over $K$ resp.\ ${S}$ is an algebraic variety over $k$, it is tempting to give an interpretation of $\lim_{\Ad^{\dagger{\op}}_{/\mcS}}\QCoh^{\dagger}(\mcT^{\dagger\dR})$ resp.\ $\lim_{\Ad^{\dagger{\op}}_{/{S}}}\QCoh^{\dagger}(\mfT_{\Q_p}^{\dagger\dR})^{\varphi} $  as quasi-coherent modules over a ``dagger de Rham stack'' over $\mcS$ resp.\ ``isocrystalline stack'' over ${S}$, and to further develop a six-functor formalism 
    for such coefficients (see also \Cref{whymot}). It is not clear how to incorporate homotopies (which seem relevant to prove functoriality and independence on choices) into such a formalism, and %
    we do not pursue this approach here.
\end{rmk}

\begin{prop}\label{app:comm}
    Let $f\colon X\to S$ be a smooth qcqs map of varieties over $k$. 
    There is a canonically commutative diagram
    $$
        \xymatrix{
        \DA(X)_{\ct}\ar[d]^{f_\sharp}\ar[r]^-{\dR}& (\lim_{\Ad^{\dagger\op}_{/X}}\QCoh^{\dagger}(\mfY_{\Q_p}^{\dagger\dR})^{\op} )^{\varphi}\ar[d]^{f_*}\\
        \DA(S)_{\ct}\ar[r]^-{\dR} &(\lim_{\Ad^{\dagger\op}_{/S}} \QCoh^{\dagger}(\mfT_{\Q_p}^{\dagger\dR})^{\op})^{\varphi}.
        }
    $$
    In particular, whenever $f$ is smooth and proper there is a canonically commutative diagram
    $$
        \xymatrix{
        \DA(X)_{\dual}\ar[d]^{f_*}\ar[r]^-{\dR^\vee}&   (\lim_{\Ad^{\dagger\op}_{/X}}\MIC(\mfY^\dagger_{\Q_p}))^\varphi\subseteq (\lim_{\Ad^{\dagger\op}_{/X}}\QCoh^{\dagger}(\mfY_{\Q_p}^{\dagger\dR}))^{\varphi} \ar@<7ex>[d]^{f_*}\\
        \DA(S)_{\dual}\ar[r]^-{\dR^\vee} & (\lim_{\Ad^{\dagger\op}_{/S}}\MIC(\mfT^\dagger_{\Q_p}))^\varphi\subseteq  (\lim_{\Ad^{\dagger\op}_{/S}}\QCoh^{\dagger}(\mfT_{\Q_p}^{\dagger\dR}))^{\varphi}.
        }
    $$
\end{prop}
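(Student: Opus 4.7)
The plan is to deduce \Cref{app:comm} from its rigid analogue \Cref{prop:comm2} by interposing the Monsky--Washnitzer realization of \Cref{prop:MW}, and then to promote the resulting commutative diagram to Frobenius-equivariant form using the naturality established in \Cref{prop:phi-equi}.

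First, I would reduce to the non-Frobenius case. Every functor appearing in the diagrams is $\varphi$-equivariant: the motivic operations $f_\sharp$ and $f_\ast$ commute with pullback along the absolute Frobenius since $\varphi_S$ is a universal homeomorphism and $\varphi_S^\ast$ is an auto-equivalence of $\DA(S)$ compatible with the six operations (see \cite[\S 2]{AyoubEt}); the realizations are equivariant by \Cref{prop:phi-equi} and \Cref{cor:Frobenius_structures}. Hence the commutativity of the $(-)^\varphi$ diagrams follows from that of the underlying non-equivariant ones, and it suffices to establish an equivalence
\[
\dR_{\rig,S}\circ f_\sharp\simeq f_\ast\circ \dR_{\rig,X}\colon \DA(X)_{\ct}\longrightarrow \Bigl(\lim_{\Ad^\dagger_{/S}}\QCoh^{\dagger}(\mfT_{\Q_p}^{\dagger\dR})\Bigr)^{\op}.
\]

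Next, I would factor $\dR_{\rig}$ as $\MW$ followed levelwise by the $\QCoh^\dagger(-_{\dR})$-valued de Rham realization of \Cref{cor:dRdaggeroncontructible}. The compatibility of the second factor with $f_\sharp$ is already handled by \Cref{prop:comm2}, so what remains is to check that $\MW$ commutes with $f_\sharp$: for a smooth qcqs map $f\colon X\to S$ of $k$-varieties,
\[
\MW_S\circ f_\sharp\simeq (f^{\rig})_\sharp\circ \MW_X,
\]
where $(f^{\rig})_\sharp$ is defined frame-by-frame by lifting the base change $f_\mfT$ of $f$ to a smooth morphism of formal models and passing to generic fibres. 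To prove this, I would evaluate at a fixed frame $(\mfT,\mfT_{\Q_p}^\dagger,g)\in\Ad^\dagger_{/S}$, where the claim becomes the commutativity of the outer rectangle in
\[
\xymatrix@C=5mm{
\DA(X)\ar[r]^-{g_X^\ast}\ar[d]_{f_\sharp}&\DA(X\times_S\mfT_\sigma)\ar[r]^-{\xi}\ar[d]_{(f_\mfT)_\sharp}&\RigDA(\widetilde{\mfY}_{\Q_p})\ar[d]^{(f^{\rig}_\mfT)_\sharp}\\
\DA(S)\ar[r]^-{g^\ast}&\DA(\mfT_\sigma)\ar[r]^-{\xi}&\RigDA(\mfT_{\Q_p}).
}
\]
The left square commutes by smooth base change in $\DA$. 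For the right square, by \'etale descent on $X$ (which holds on both sides, thanks to \Cref{prop:MW} and the analytic descent of $\RigDA$), one may reduce to three generating smooth qcqs cases: Zariski open immersions, projections $\A^n_S\to S$, and finite \'etale coverings. In each of these, $X\times_S \mfT_\sigma\to\mfT_\sigma$ admits a smooth qcqs formal lift $\widetilde{\mfY}\to\mfT$ (tautologically in the first two cases and via \cite[Proposition 2.15(2)]{vezz-MW} for finite \'etale maps), and the commutation of $\xi$ with $f_\sharp$ is then visible on representable motives, as both sides send $\Q_{\widetilde{\mfY}}(\mfZ)$ to $\Q_{\mfT_{\Q_p}}(\mfZ_{\Q_p})$.

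The bottom diagram in the statement is deduced from the top one by restricting to dualizable motives, using the monoidality of $\dR_{\rig}$ on constructibles, and the identity $f_\ast\simeq (f_\sharp((-)^\vee))^\vee$ for smooth and proper $f$ (see \Cref{rmk:dualinDA} and \Cref{rmk:propersmooth}). The main obstacle I foresee is the assembly of the pointwise formal-model choices into a single functor $\lim_{\Ad^\dagger_{/X}}\QCoh^{\dagger}(-_{\dR})\to \lim_{\Ad^\dagger_{/S}}\QCoh^{\dagger}(-_{\dR})$: this requires verifying the cofinality of an auxiliary functor $\Ad^\dagger_{/S}\to \Ad^\dagger_{/X}$ built by base change from the chosen smooth formal lifts, in the spirit of the three explicit cases worked out in the proof of \Cref{prop:comm2}, where each comma category had a manifest initial object. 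Handling this cofinality uniformly in the mixed-characteristic setting (and in particular verifying that the resulting assignment is independent of the chosen lift up to coherent homotopy) is the most delicate step.
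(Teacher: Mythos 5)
Your strategy shares the same skeleton as the paper's proof: reduce to the Frobenius-free situation, reduce to local generating cases of smooth maps, invoke \Cref{prop:comm2} pointwise, and deduce the dualizable diagram from the constructible one via $f_\sharp((-)^\vee)\simeq(f_*(-))^\vee$. However, the step you flag as ``the most delicate'' --- assembling the frame-by-frame lifts into a single functor $f_\sharp\colon \lim_{\Ad^\dagger_{/X}}\RigDA(\mfY^\dagger_{\Q_p})\to\lim_{\Ad^\dagger_{/S}}\RigDA(\mfT^\dagger_{\Q_p})$ --- is genuinely the crux of the proposition and is left unresolved in your proposal. Observing that the comma categories had initial objects in \Cref{prop:comm2} does not by itself transfer: there the indexing was $\Ad^\dagger_{/\mcS}$ over a \emph{fixed} rigid base, whereas here the frames are formal schemes over the algebraic $S$ and $X$, and the choice of a smooth formal lift of $f_\mfT$ is neither unique nor \textit{a priori} functorial in $\mfT$. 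Without a mechanism to make these choices coherent, the rest of the argument does not compile.

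The paper's resolution is to introduce a nested decomposition of the index category,
\[
\lim_{\Ad^\dagger_{/X}}\RigDA(\mfY_{\Q_p}^\dagger)\simeq\lim_{\mfY\in\FSch/X}\Bigl(\lim_{\mcY^\dagger\in\Ad^{\dagger}_{/\mfY_{\Q_p}}}\RigDA(\mcY^\dagger)\Bigr),
\]
which separates the choice of a formal model from the choice of a dagger structure. The outer index $\FSch/X$ is compared to $\FSch/S$ by constructing a canonical $\infty$-\emph{final} functor $r\colon\FSch/S\to\FSch/X$: for $f$ \'etale this is the \emph{universal} lift of \'etale maps from the special fiber to a formal scheme (\cite[Chapter I, Section 10.9]{EGAIspringer}), which resolves the non-canonicity you worry about by invoking the unique \'etale lifting property of formal schemes; for $f=\P^1_S\to S$ it is simply $\mfT\mapsto\P^1_\mfT$. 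Once finality is established at the level of formal schemes, the inner dagger-level limit is handled precisely by \Cref{prop:comm2}, and the canonical comparison map (built once and for all from the unit of $(f^*,f_\sharp)$ rather than by an ad hoc construction of $f_\sharp$ on limit categories) is checked to be an equivalence on $\one$. Your choice of generating maps (open immersions, $\A^n$-projections, finite \'etale) would also work, but you would still have to supply the analogue of $r$ and prove its finality --- this is not a formality, it is the missing idea. Secondary remark: constructing the comparison map $f_*\circ\dR\to\dR\circ f_\sharp$ first from the unit, as the paper does, is cleaner than building both sides and matching on representables, because it postpones the need for a globally defined $f_\sharp$ on the overconvergent side until after the invertibility check has been localized.
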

\begin{proof}
    Just like in \Cref{prop:comm2}, we remark that the second statement follows from the first  and the formula $f_\sharp((-)^\vee)\simeq (f_*(-))^\vee$ given by \Cref{rmk:dualinDA}. 
    Moreover, as argued in the proof of \Cref{prop:comm2} there is
    a natural transformation
    $$
        \dR\to \dR\circ f^*\circ f_\sharp\simeq f^*\circ\dR\circ f_\sharp
    $$
    giving rise to a canonical map $f_*\circ \dR\to \dR\circ f_\sharp$.

    Note that, in order to prove that the transformation above is invertible, we might just as well forget about the Frobenius structure as the natural functor $(\lim\QCoh(\mfS_{\Q_p}^{\dagger\dR}))^\varphi\to\lim\QCoh(\mfS_{\Q_p}^{\dagger\dR})$ is conservative. 

    We can now prove the proposition in an analogous way as 
    \Cref{prop:comm2}. As the statement is local on $S$ and $X$, we may and do assume that $f$  is either \'etale or the projection $\P^1_S\to S$. It suffices  to show that $f_*\one\simeq\dR\one$.

    Note that we can decompose the  limit as follows 
    $$
        \lim_{\Ad^{\dagger\op}_{/X}}\RigDA(\mfY_{\Q_p}^\dagger)\simeq\lim_{\mfY\in\FSch^{\op}/X}\left(\lim_{\mcY^\dagger\in\Ad^{\dagger\op}_{/\mfY_{\Q_p}}}\RigDA(\mcY^\dagger)\right)
    $$
    where $\FSch/X$ denotes the category of formal schemes $\mfY$ with a map $\mfY_\sigma\to X$. In case $f$ is \'etale (resp.\ is the projection $\P^1_S\to S$) there is a canonical (infinity) final functor $r\colon \FSch/S\to \FSch/X$ given by the universal lift of \'etale maps from the special fiber to a formal scheme \cite[Chapter I, Section 10.9]{EGAIspringer} (resp.\ the functor $\mfT\mapsto \P^1_{\mfT}$). Arguing as in the proof of \Cref{prop:comm2} the functors
    $$
        f^\mfT_\sharp\colon \lim_{\mcY^\dagger\in\Ad^{\dagger\op}_{/r(\mfT)_{\Q_p}}}\RigDA(\mcY^\dagger)\to \lim_{\mcT^\dagger\in\Ad^{\dagger\op}_{/\mfT_{\Q_p}}}\RigDA(\mcY^\dagger)
    $$
    as $\mfT$ varies in $\FSch/S$, assemble to a functor $f_\sharp\colon \lim_{\Ad^{\dagger\op}_{/X}}\RigDA(\mcY_{\Q_p}^\dagger)\to \lim_{\Ad^{\dagger\op}_{/S}}\RigDA(\mcT_{\Q_p}^\dagger)$ fitting in the  commutative diagram
    $$
        \xymatrix{
        \DA(X)\ar[d]^{f_\sharp}\ar[r]^-{\MW}& \lim_{\Ad^{\dagger\op}_{/X}}\RigDA(\mcY_{\Q_p}^{\dagger}) \ar[d]^{f_\sharp}
        \\
        \DA(S) \ar[r]^-{\MW}&\lim_{\Ad^{\dagger\op}_{/S}} \RigDA(\mcT_{\Q_p}^{\dagger}) 
        }
    $$
    Moreover, the object $\dR f_\sharp\one$ corresponds to the object $(\dR f_\sharp^\mfT\one)_{\mfT}$ under the equivalence   
    $$
        \lim_{\mfY\in\FSch^{\op}/X}\left(\lim_{\mcY^\dagger\in\Ad^{\dagger\op}_{/\mfY_{\Q_p}}}\QCoh^{\dagger}(\mcY^{\dagger,\dR})\right)\simeq \lim_{\Ad^{\dagger\op}_{/X}}\QCoh^{\dagger}(\mfY_{\Q_p}^{\dagger,\dR}).
    $$ 
    Consequently, it is canonically equivalent to $(f^\mfS_*\one)_{\mfT}$ by \Cref{prop:comm2}. 
    One then shows formally that this object represents the functor $\map(f^*,\one)$ so that it is canonically equivalent to $f_*\one$ as wanted. 
\end{proof}

\begin{rmk}\label{rmk:future}
    Motivated by \Cref{rmk:MICs} and the classical comparison between isocrystals and modules with integrable connections (see e.g. \cite[Proposition 8.1.13]{lestumbook}), we expect there to be an equivalence between the category $(\lim_{\Ad^{\dagger\op}_{/S}}\MIC(\mcT^\dagger_{\Q_p}))^\varphi$ and the category of $F$-isocrystals $(\lim_{\Ad^{\dagger\op}_{/S}}\Perf(\mcT^\dagger_{\Q_p}))^\varphi$. 
\end{rmk}

\end{document}